\newcolumntype{x}[1]{>{\centering\arraybackslash\hspace{0pt}}p{#1}}
\newcolumntype{v}[1]{>{\arraybackslash\hspace{0pt}}p{#1}}
\newcolumntype{g}[1]{>{\columncolor{gray!10}\centering\arraybackslash\hspace{0pt}}p{#1}}
\pgfplotsset{compat=1.9}
\definecolor{linkcolor}{rgb}{0.5,0.0,0.0}
\definecolor{citecolor}{rgb}{0.0,0.5,0.0}
\definecolor{urlcolor} {rgb}{0.0,0.0,0.5}
\theoremstyle{plain}
\newtheorem{lemma}{Lemma}
\newtheorem{theorem}{Theorem}
\newtheorem{proposition}{Proposition}
\newtheorem{corollary}{Corollary}
\theoremstyle{definition}
\newtheorem{definition}{Definition}
\newtheorem{example}{Example}
\theoremstyle{definition}
\newtheorem{remark}{Remark}
\newcommand{\cc}[1]{\ensuremath{\overline{#1}}}
\newcommand{\pe}[1]{\ensuremath{\check{#1}}}
\newcommand{\g}[1]{\ensuremath{g_{#1}}}
\newcommand{\projalg}{\ensuremath{\mathfrak{p}}}
\newcommand{\projcl}{\ensuremath{\mathfrak{P}}}
\newcommand{\solSp}{\ensuremath{\mathfrak{A}}}
\newcommand{\metrSp}{\ensuremath{\mathfrak{M}}}
\newcommand{\essSp}{\ensuremath{\mathfrak{E}}}
\newcommand{\lie}{\ensuremath{\mathcal{L}}}
\newcommand{\h}[2]{\begin{minipage}{3.5cm}\centering \textbf{#1}\\ \textit{#2} \end{minipage}}
\newcommand{\Rnz}{\ensuremath{\mathbb{R}\setminus\{0\}}}
\newcommand{\pmo}{\ensuremath{ \{\pm1\} }}
\newcommand{\angl}{\ensuremath{[0,2\pi)}}
\newcommand{\halfangl}{\ensuremath{[0,\pi)}}
\newcommand{\R}{\ensuremath{\mathbb{R}}}
\DeclareMathOperator{\sgn}{sgn}
\DeclareMathOperator{\erfi}{erfi}
\DeclareMathOperator{\erf}{erf}
\DeclareMathOperator{\hypergeom}{hypgeom}
\DeclareMathOperator{\rk}{rk}
\DeclareMathOperator{\Id}{Id}
\newcommand{\allowhy}{\nobreak\hskip\z@skip}
\newcommand{\pushright}[1]{\ifmeasuring@#1\else\omit\hfill$\displaystyle#1$\fi\ignorespaces}
\newcommand{\pushleft}[1]{\ifmeasuring@#1\else\omit$\displaystyle#1$\hfill\fi\ignorespaces}
\begin{document}

\title{(Super-)integrable systems associated to 2-dimensional\\ projective connections with one projective symmetry}
\author{%
	\textsc{Gianni Manno} (\textsf{giovanni.manno@polito.it})\\
    \footnotesize{
    Dipartimento di Scienze Matematiche (DISMA),
    Politecnico di Torino,
    Corso Duca degli Abruzzi, 24,
    10129 Torino, Italy
    },\\[0.5cm]
    \textsc{Andreas Vollmer} (\textsf{andreasdvollmer@gmail.com})\\
    \footnotesize{School of Mathematics and Statistics,
    University of New South Wales,
    Sydney NSW 2052, Australia
    }
    }
\maketitle

\begin{abstract}
\noindent Projective connections arise from equivalence classes of affine connections under the reparametrization of geodesics. They may also be viewed as quotient systems of the classical geodesic equation.
After studying the link between integrals of the (classical) geodesic flow and its associated projective connection, we turn our attention to 2-dimensional metrics that admit one projective vector field, i.e.\ whose local flow sends unparametrized geodesics into unparametrized geodesics.
We review and discuss the classification of these metrics, introducing special coordinates on the linear space of solutions to a certain system of partial differential equations, from which such metrics are obtained.
Particularly, we discuss those that give rise to free second-order superintegrable Hamiltonian systems, i.e.\ which admit 2 additional, functionally independent quadratic integrals.
We prove that these systems are parametrized by the 2-sphere, except for 6 exceptional points where the projective symmetry becomes homothetic.
\end{abstract}


\section{Introduction}

The term \emph{metric} is used for metrics of arbitrary signature. Specifically, in the 2-dimensional case, the word metric stands for both Riemannian and Lorentzian metrics, unless otherwise specified. Einstein's summation convention will be used.
Our focus is a local study, i.e.\ the manifold may be identified with an open subset of~$\mathbb{R}^2$. This is in line with the existing literature on the topic, e.g. compare~\cite{bryant_2008,matveev_2012,manno_2018}, which are also of a local nature.
Let us now introduce the basic terminology and the main objects used in this paper.

\begin{definition}
We say that two symmetric affine connections on the same manifold $M$ are \emph{projectively equivalent} if they share the same geodesics (as unparametrized curves). The set of all connections projectively equivalent to a given connection $\nabla$  is called the \emph{projective class} of such a connection, denoted by $[\nabla]$.

If a projective class $[\nabla]$ contains a Levi-Civita connection $\nabla^g$ of a metric~$g$, we say that $[\nabla]$ is \emph{metrizable}. Two metrics are said to be projectively equivalent if their Levi-Civita connections are so. We denote by~$\projcl(g)$ the projective class of~$g$, i.e., the set of metrics projectively equivalent to $g$.
\end{definition}

\noindent A classical result \cite{weyl_1921,thomas_1925,veblen_1922} states that two connections $\nabla$ and $\widetilde{\nabla}$ are projectively equivalent if there exits a 1-form $\alpha$ such that
\begin{equation}\label{eqn:prj.equ.conn}
\widetilde{\nabla} - \nabla = \alpha\otimes\Id + \Id\otimes\alpha\,,
\end{equation}
or, locally,
\begin{equation}\label{eqn:Gamma.bar.Gamma}
\widetilde{\Gamma}^a_{bc}-\Gamma^a_{bc}=\delta^a_b\alpha_c + \delta^a_c\alpha_b\,.
\end{equation}
where $\widetilde{\Gamma}^a_{bc}$ and $\Gamma^a_{bc}$ are, respectively, the Christoffel symbols of $\widetilde{\nabla}$ and $\nabla$.
For a more detailed explanation and proof of this statement see, for instance, \cite{matveev_2012rel}.

\smallskip
\begin{definition}\label{def:projective.symmetry}
A \emph{projective transformation} is a (local) diffeomorphism of $M$ that sends geodesics into geodesics (where geodesics are to be understood as unparametrized curves).
A vector field on $M$ is \emph{projective} if its (local) flow acts by projective transformations.
\end{definition}

\noindent The term \emph{projective symmetry}, as defined in Definition~\ref{def:projective.symmetry}, is explained for a given connection. We say that a projective symmetry~$X$ is homothetic (for a metric~$g$), if $\lie_Xg=\lambda g$ for some $\lambda\in\mathbb{R}$. Particularly, if $X$ is Killing, it is also homothetic. If $X$ is not homothetic, it is called \emph{essential}.

While the characterization of a projective symmetry depends on the projective class only, the property of being homothetic or essential relies on the existence of a metric.
%
Oftentimes, one is interested in projective classes that admit metrics with an essential projective symmetry, see e.g.~\cite{matveev_2012}. This is also true in the present paper, and we therefore introduce the concept of essentiality of projective symmetries on the level of projective connections.
\begin{definition}
	An infinitesimal projective symmetry~$X$ of a metrizable projective connection $[\nabla]$ is called \emph{essential} if and only if the symmetry~$X$ is non-homothetic for at least one metric~$g$ that realizes $[\nabla]$ via its Levi-Civita connection.
\end{definition}

\noindent In view of formula \eqref{eqn:prj.equ.conn}, a vector field $X$ is a projective vector field if there exists a 1-form $\mu$ such that
\begin{equation}\label{eq:proj.vect.field}
\lie_X\nabla = \mu\otimes\Id + \Id\otimes\mu
\end{equation}
where the Lie derivative $\lie_X\nabla$ of $\nabla$ along $X$ is a $(1,2)$-tensor defined as follows:
\begin{equation*}
(\lie_X\nabla)(Y,Z):=\lie_X(\nabla_YZ)-\nabla_Y(\lie_XZ)-\nabla_{\lie_XY}Z=[X,\nabla_YZ]-\nabla_Y[X,Z]-\nabla_{[X,Y]}Z\,.
\end{equation*}
Locally, \eqref{eq:proj.vect.field} reads
\begin{equation*}
\lie_X\Gamma^i_{jk}=\mu_j\delta^i_k+\mu_k\delta^i_j\,.
\end{equation*}

\noindent From an ODE perspective, the projective class of a given symmetric affine connection $\nabla$ can be understood as follows. Let $(y^1,y^2,\dots,y^{N})=(x,y^2,\dots,y^{N})$ be a system of coordinates on $M$. Then $\nabla$ gives rise to a system of second order ordinary differential equations
\begin{equation}\label{eq:proj.conn.gen.multidim}
y^k_{xx}=-\Gamma_{11}^k - (2\Gamma_{1i}^k-\delta^k_i\Gamma_{11}^1)  y^i_x - (\Gamma_{ij}^k  - 2\delta^k_i \Gamma_{1j}^1 )
y^i_x y^j_x + \Gamma_{ij}^1 \, y^i_x y^j_x y^k_x\,,\quad k=2,\dots,N\,,\,\,\, i,j\geq 2\,,
\end{equation}
obtained by eliminating the external parameter from the classical geodesic equations (see for instance \cite{manno_advances}):
\begin{equation}\label{eq:param.geod.equ}
\ddot{y}^k+\Gamma^k_{ij}\dot{y}^i\dot{y}^j=0\,, \quad k=1,\dots,N\,.
\end{equation}

\begin{definition}
System~\eqref{eq:proj.conn.gen.multidim} is called the~\emph{projective connection} representing the projective class~$[\nabla]$.
\end{definition}
\noindent In Section \ref{sec:ode.perspecitve} we describe in detail how to construct this representative system.\medskip

\noindent For any solution $(y^2(x),\dots,y^N(x))$ to \eqref{eq:proj.conn.gen.multidim}, the curve $(x,y^2(x),\dots,y^N(x))$ is, up to reparametrization, a solution to \eqref{eq:param.geod.equ}, i.e., a geodesic of $\nabla$. Thus, we can interpret system \eqref{eq:proj.conn.gen.multidim} as  the projective connection associated to $\nabla$. On the other hand, as we said, two connections $\nabla$ and $\widetilde{\nabla}$ are projectively equivalent if and only if they are related by~\eqref{eqn:Gamma.bar.Gamma} and connections linked by~\eqref{eqn:Gamma.bar.Gamma} give the same system of equations~\eqref{eq:proj.conn.gen.multidim}.

\medskip
\noindent Taking into account the above considerations, finite point symmetries, i.e.\ local diffeomorphisms
\[
 (y^1,\dots,y^N)\to\big(y^1(v^1,\dots,v^N),\dots,y^N(v^1,\dots,v^N)\big)
\]
that preserve~\eqref{eq:proj.conn.gen.multidim}, are precisely the projective transformations of the connection~$\nabla$, as they send solutions (i.e., unparametrized geodesics) into solutions. Infinitesimal point symmetries of \eqref{eq:proj.conn.gen.multidim} are projective vector fields of~$\nabla$ and generate a $1$-parametric family of projective transformations. We can see a ($N$-dimensional) projective connection as
the following system of ODE:
\begin{equation}\label{eq:proj.conn.gen.multidim.2}
y^k_{xx}=f_{11}^k + f_{1i}^k  y^i_x + f_{ij}^k
y^i_x y^j_x + f^1_{ij} \, y^i_x y^j_x y^k_x\,,\quad k=2,\dots,N\,,\,\,\, i,j\geq 2\,,\quad f^k_{ij}=f^k_{ji}\,.
\end{equation}
Note that, for $N=2$, system \eqref{eq:proj.conn.gen.multidim} reduces to a single ODE, namely the classical $2$-dimensional projective connection associated to a $2$-dimensional metric
\begin{equation*}
y_{xx} = -\Gamma^2_{11} +(\Gamma^1_{11}-2\Gamma^2_{12})\,y_x -(\Gamma^2_{22}-2\Gamma^1_{12})\,y_x^2 +\Gamma^1_{22}\,y_x^3
\end{equation*}
where $(x,y,y_x,y_{xx}):=(y^1,y^2,y^2_x,y^2_{xx})$. Furthermore, \eqref{eq:proj.conn.gen.multidim.2} reduces to a single ODE
\begin{equation}\label{eqn:predefined.proj.connnection}
  y_{xx} = f_0 +f_1\,y_x +f_2\,y_x^2 +f_3\,y_x^3\,,\quad f_i=f_i(x,y)\,,
\end{equation}
extensively studied, for instance, in \cite{bryant_2008,bryant_2009,matveev_2012,sharpe_1997,aminova_2006}.

\section{Metrizable projective connections and Benenti tensors}\label{sec:connections.and.benenti.tensors}

A classical question is whether a projective connection is metrizable.
%
In local coordinates, the projective connection \eqref{eq:proj.conn.gen.multidim.2} is metrizable if there exists an $N$-dimensional metric $g$ such that \eqref{eq:proj.conn.gen.multidim}, where $\Gamma^{i}_{jk}$ are the Christoffel symbols of the Levi-Civita connection of $g$, is equal to \eqref{eq:proj.conn.gen.multidim.2}. This is equivalent to the existence of a solution to the following system of $\frac12 N(N-1)(N+2)$ partial differential equations (PDE):
\begin{multline}\label{eq:metr.cond.non.linear}
-\Gamma_{11}^k=f_{11}^k \,,\,\,\, - (2\Gamma_{1m}^k-\delta^k_m\Gamma_{11}^1)=  f_{1m}^k  \,,\,\,\,  -\big((2-\delta_{jm})\Gamma_{jm}^k  - 2\delta^k_m \Gamma_{1j}^1 - 2\delta^k_j \Gamma_{1m}^1\big)= f_{jm}^k\,,\,\,\,
  \Gamma_{ij}^1 =f^1_{ij} \,,
  \\
  k,i,j,m=2,\dots,N
\end{multline}
System \eqref{eq:metr.cond.non.linear} is highly non-linear in the unknown functions $g_{ij}$, but it turns out that if we perform the substitution
\begin{equation}\label{eqn:sigma.g}
\sigma^{ij}:=\det(g)^{\frac{1}{N+1}}g^{ij}\in S^2(M)\otimes (\Lambda^N(M))^{\frac{2}{N+1}}\,,
\end{equation}
we obtain a \emph{linear} system in the unknown variables $\sigma^{ij}$.
Of course, \eqref{eqn:sigma.g} does not make sense if $g$ is negative-definite. Therefore, without further mentioning, in \eqref{eqn:sigma.g} and in the remainder of the paper, the fractional exponent implies that we use the absolute value of the base expression unless stated otherwise. We have the following theorem.
\begin{theorem}[\cite{eastwood_2008}]\label{thm:east.mat}
A metric $g$ on an $N$-dimensional manifold lies in the projective class of a given connection~$\nabla$ if and only if $\sigma^{ij}$ defined by \eqref{eqn:sigma.g} is a solution to
\begin{equation}\label{eqn:linear.system}
\nabla_a\sigma^{bc}-\frac{1}{N+1}(\delta^c_a\nabla_i\sigma^{ib} + \delta^b_a\nabla_i\sigma^{ic} )=0\,.
\end{equation}
\end{theorem}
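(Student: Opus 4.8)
The plan is to fix once and for all a representative torsion-free connection $\nabla$ of the projective class and to argue that \eqref{eqn:linear.system} is simply a repackaging of the metricity condition $\nabla^g g=0$ via the transformation law of covariant derivatives under a projective change of connection. The computation that drives everything is the following identity: if $\hat\nabla=\nabla+\Upsilon$ with $\Upsilon^a_{bc}=\delta^a_b\Upsilon_c+\delta^a_c\Upsilon_b$ for a $1$-form $\Upsilon$, then for the weighted symmetric tensor field $\sigma^{bc}$ of \eqref{eqn:sigma.g} one has
\[
\hat\nabla_a\sigma^{bc}=\nabla_a\sigma^{bc}+\delta^b_a\,\Upsilon_d\sigma^{dc}+\delta^c_a\,\Upsilon_d\sigma^{bd}\,.
\]
Indeed, each of the two tensorial index corrections produces a term $\Upsilon_a\sigma^{bc}$, while the density factor $|\det g|^{1/(N+1)}$ produces, under $\Upsilon$, a term $-2\,\Upsilon_a\sigma^{bc}$ — and this is exactly why the exponent $\tfrac1{N+1}$ (equivalently the density weight $\tfrac2{N+1}$) is chosen in \eqref{eqn:sigma.g}: the three $\Upsilon_a\sigma^{bc}$ contributions cancel, leaving only the displayed ``pure trace'' terms. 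I would prove this identity first, by a direct Christoffel-symbol computation, being careful with the covariant derivative of the density part.

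For the ``only if'' direction, assume $g\in[\nabla]$, so that by \eqref{eqn:prj.equ.conn} we may write $\nabla^g=\nabla+\beta\otimes\Id+\Id\otimes\beta$ for some $1$-form $\beta$. Since $\sigma^{bc}$ is built from $g$ by purely metric operations (the inverse metric and the metric volume density), it is parallel for the Levi-Civita connection, $\nabla^g_a\sigma^{bc}=0$. Substituting $\hat\nabla=\nabla^g$, $\Upsilon=\beta$ in the identity above gives $\nabla_a\sigma^{bc}=\delta^c_a\mu^b+\delta^b_a\mu^c$ with $\mu^b:=-\beta_d\sigma^{db}$. Contracting the indices $a$ and $c$ and using the symmetry of $\sigma$ yields $\nabla_i\sigma^{ib}=(N+1)\mu^b$, hence $\mu^b=\tfrac1{N+1}\nabla_i\sigma^{ib}$; substituting back gives precisely \eqref{eqn:linear.system}.

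For the ``if'' direction, assume $\sigma^{bc}$ is a solution of \eqref{eqn:linear.system}, necessarily non-degenerate (a degenerate $\sigma$ does not correspond to a metric). Inverting the algebraic relation \eqref{eqn:sigma.g}, a suitable power of $\det\sigma$ times $\sigma^{bc}$ recovers the inverse metric $g^{bc}$, hence $g$. Set $\mu^b:=\tfrac1{N+1}\nabla_i\sigma^{ib}$, define the $1$-form $\beta$ by $\beta_d:=-\sigma_{db}\mu^b$ (with $\sigma_{db}$ the inverse matrix of $\sigma^{bc}$), and put $\hat\nabla:=\nabla+\beta\otimes\Id+\Id\otimes\beta$. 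Then by the transformation identity,
\[
\hat\nabla_a\sigma^{bc}=\bigl(\delta^c_a\mu^b+\delta^b_a\mu^c\bigr)+\delta^b_a(-\mu^c)+\delta^c_a(-\mu^b)=0\,.
\]
Thus $\sigma$ is $\hat\nabla$-parallel, hence so are $\det\sigma$, $g^{bc}$ and $g_{bc}$. Since $\hat\nabla$ is torsion-free (being a projective modification of the torsion-free $\nabla$) and preserves the non-degenerate metric $g$, it must coincide with the Levi-Civita connection $\nabla^g$. Therefore $\nabla^g-\nabla=\beta\otimes\Id+\Id\otimes\beta$, i.e.\ $g\in[\nabla]$, which closes the equivalence.

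The part I expect to be the real obstacle is the density bookkeeping in the transformation identity: one must compute the covariant derivative of $|\det g|^{1/(N+1)}$ under a projective change of connection and verify that the coefficient of $\Upsilon_a\sigma^{bc}$ coming from it is exactly $-2$, since this single fact is simultaneously responsible for (i) the linearity of \eqref{eqn:linear.system} in $\sigma$, (ii) the cancellation of the $\Upsilon_a\sigma^{bc}$ terms, and (iii) the precise constant $\tfrac1{N+1}$. A secondary point meriting care is the final step of the converse — that a torsion-free connection parallelizing a non-degenerate symmetric $\sigma$ automatically parallelizes the associated metric $g$ and hence equals $\nabla^g$ — together with the implicit non-degeneracy hypothesis on $\sigma$, which should be stated explicitly.
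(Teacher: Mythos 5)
Your proof is correct. Note, however, that the paper does not prove Theorem~\ref{thm:east.mat} at all: it is quoted as a known result with the citation \cite{eastwood_2008}, so there is no in-paper argument to compare against; what you have written is essentially a self-contained reconstruction of the standard Eastwood--Matveev argument. The key transformation identity is right, and it is consistent with the paper's own displayed formula $\nabla_a\sigma^{bc} = \sigma^{bc}_{,a} + \Gamma^b_{ad}\sigma^{dc} + \Gamma^c_{ad}\sigma^{db} - \frac{2}{N+1}\Gamma^d_{da}\sigma^{bc}$: replacing $\Gamma^a_{bc}$ by $\Gamma^a_{bc}+\delta^a_b\Upsilon_c+\delta^a_c\Upsilon_b$ gives $+2\Upsilon_a\sigma^{bc}$ from the two upper indices and $-\frac{2}{N+1}(N+1)\Upsilon_a\sigma^{bc}=-2\Upsilon_a\sigma^{bc}$ from the weight term, so only the pure-trace terms $\delta^b_a\Upsilon_d\sigma^{dc}+\delta^c_a\Upsilon_d\sigma^{bd}$ survive, exactly as you claim. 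Both directions then go through as you describe (the trace over $a=c$ giving the factor $N+1$, and in the converse the torsion-free connection $\hat\nabla=\nabla+\beta\otimes\Id+\Id\otimes\beta$ parallelizing the non-degenerate $g$ recovered from $\sigma$, hence coinciding with $\nabla^g$); your explicit flagging of the non-degeneracy assumption matches the remark the paper makes immediately after the theorem.
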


\noindent Note that Theorem \ref{thm:east.mat} implicitly contains the assumption $\det(\sigma)\ne0$, as $\sigma^{ij}$ otherwise does not correspond to a metric. Furthermore, note that, since $\sigma$ is a weighted tensor field,
\begin{equation*}
\nabla_a\sigma^{bc} = \sigma^{bc}_{,a} + \Gamma^b_{ad}\sigma^{dc} + \Gamma^c_{ad}\sigma^{db} - \frac{2}{N+1}\Gamma^d_{da}\sigma^{bc}\,.
\end{equation*}
Of course, the set of solutions to the linear system of PDEs \eqref{eqn:linear.system} form a linear space.
As a special case of Theorem~\ref{thm:east.mat} we have, in dimension~2, the following example.
\begin{example}[\cite{bryant_2008,liouville_1889}]
	The projective connection associated to the Levi-Civita connection of a metric~$g$ is~\eqref{eqn:predefined.proj.connnection}
	if and only if the entries $\sigma^{ij}$ of the matrix $\sigma=|\det(g)|^{\nicefrac13}g^{-1}$ satisfy the linear system of PDEs
	\begin{equation}\label{eqn:linear.system.2D}
	\left.
	\begin{array}{rl}
	\sigma^{22}_x - \frac23\,f_1\,\sigma^{22}-2f_0\,\sigma^{12} &= 0
	\\
	\sigma^{22}_y - 2\sigma^{12}_x -\frac43\,f_2\,\sigma^{22} -\frac23\,f_1\,\sigma^{12}+2f_0\,\sigma^{11} &= 0
	\\
	-2\sigma^{12}_y + \sigma^{11}_x -2f_3\,\sigma^{22} +\frac23\,f_2\,\sigma^{12}+\frac43\,f_1\,\sigma^{11} &= 0
	\\
	\sigma^{11}_y + 2f_3\,\sigma^{12}+\frac23\,f_2\,\sigma^{11} &= 0
	\end{array}
	\right\}
	\end{equation}
	where the subscripts $x,y$ denote derivatives.
\end{example}

\begin{remark}
	In dimension~2, it is also possible to linearize the system~\eqref{eqn:linear.system.2D} by choice of the new unknowns~$a_{ij}$, i.e.\ components of the matrix $a=\frac{g}{|\det(g)|^{\nicefrac23}}$. In this case, the object $a$ should be understood as a section of \smash{$S^2(M)\otimes(\Lambda^2(M))^{-\frac43}$}.
\end{remark}

\begin{definition}\label{def:metrization.space}
Let $g$ be a metric and $\nabla$ its Levi-Civita connection. The (linear) space of solutions to system  \eqref{eqn:linear.system} is denoted by $\solSp(g)$ (or $\solSp$ in short when there is no risk of confusion) and it is called the \emph{Liouville metrization space}, or briefly \emph{Liouville space}. Its dimension is called the \emph{degree of mobility of $g$}.
The subset $\metrSp\subset\solSp$ of solutions that actually correspond to a metric, projectively equivalent to the underlying metric~$g$, is to be called the \emph{metrizability space} and denoted by~$\metrSp$.
\end{definition}

\begin{remark}
In the case of a 2-dimensional manifold, the Liouville metrization and the metrizability space are almost identical, differing only by the origin, i.e.\ $\metrSp = \solSp\setminus\{0\}$.
This follows from Lemma~2 of~\cite{matveev_2012}, where the following statement is proven:
Assume $\sigma\in\solSp$ is not identically zero. Then, the set of the points where $\sigma$ is degenerate is nowhere dense.
\end{remark}

\noindent Since the degree of mobility is the same for any choice of metric $g$ within a given projective class, it is also reasonable to call the degree of mobility of $g$ the degree of mobility of its projective class (or of its projective connection).

\begin{proposition}[\cite{beltrami_1865,bolsinov_2009,dini_1869}]
	\label{prop:dini}
	Let $\g1$ and $\g2$ be two metrics that are projectively equivalent, but not proportional.
	Then, around almost every point, there exist local coordinates, say $(x,y)$, such that the pair $(\g1,\g2)$ assumes one of the following \emph{Dini normal forms}.
	\smallskip
	
	\begin{table}[!ht]
	\begin{center}
	\textbf{\upshape The Dini normal forms for pairs of projectively equivalent metrics}\smallskip
	
	\begin{tabular}{cg{4cm}x{4.2cm}g{4cm}}
	 \toprule
	 & \h{A}{Liouville case} & \h{B}{complex Liouville case} & \h{C}{Jordan block case} \\
	 \midrule
	 \g1 & $(X-Y)(dx^2\pm dy^2)$ & $(\cc{h(z)}-h(z))\,(d\cc{z}^2-dz^2)$ & $(1+xY^\prime)dxdy$  \\
	 \midrule
	 \g2
	 & $(\frac{1}{X}-\frac{1}{Y})(\frac{dx^2}{X}\pm\frac{dy^2}{Y})$
	 & \begin{minipage}{4.2cm}\centering $\left(\frac{1}{\cc{h(z)}}-\frac{1}{h(z)}\right)\left(\frac{d\cc{z}^2}{\cc{h(z)}}-\frac{dz^2}{h(z)}\right)$ \end{minipage}
	 & \begin{minipage}{3.5cm}\centering $\frac{1+xY'}{Y^4}\ (-2Y\,dxdy$\\ $+(1+xY')\,dy^2)$\end{minipage} \\
	 \bottomrule
	\end{tabular}
	\caption{The Dini normal forms as developed in~\cite{bolsinov_2009}. The functions $X=X(x)$ and $Y=Y(y)$ depend on one variable only, and in the complex Liouville case we use variables $z=x+iy$, $\cc{z}=x-iy$.}\label{tab:dini}
	\end{center}
	\end{table}
\end{proposition}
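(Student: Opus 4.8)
The plan is to package the relation between $\g1$ and $\g2$ into a single field of endomorphisms and to read the normal form off its pointwise algebraic type. From the weighted tensors $\sigma_a:=|\det\g{a}|^{\nicefrac13}\,\g{a}^{-1}$ of \eqref{eqn:sigma.g} one forms the \emph{Benenti tensor}
\begin{equation*}
L:=\Big|\tfrac{\det\g1}{\det\g2}\Big|^{\nicefrac13}\,\g2^{-1}\g1\ \in\ \operatorname{End}(TM),
\end{equation*}
which is self-adjoint with respect to both $\g1$ and $\g2$ and has nowhere-vanishing eigenvalues. Since $\g1$ and $\g2$ are projectively equivalent, Theorem~\ref{thm:east.mat} applies to both $\sigma_1$ and $\sigma_2$; combining the two instances of \eqref{eqn:linear.system} and rewriting everything in terms of $L$ yields the classical linear PDE for geodesically equivalent metrics (Sinjukov's equation), which asserts that $\nabla^{\g1}L$ equals, up to symmetrization of indices, the symmetric product of $\g1$ with the $1$-form $\tfrac12\,d(\tr L)$. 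Non-proportionality of $\g1$ and $\g2$ means exactly that $L$ is not a scalar multiple of the identity on a dense open set $U$.

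I would then use Sinjukov's equation to extract two structural facts. First, on the (dense, open) subset of $U$ where the algebraic type of $L$ is locally constant the eigendistributions of $L$ are smooth, and over $\mathbb{R}$ a non-scalar $2\times2$ endomorphism has exactly one of three types: two distinct real eigenvalues (case A), a conjugate pair of non-real eigenvalues (case B), or one real eigenvalue with a single $2\times2$ Jordan block (case C); it therefore suffices to treat a neighbourhood of a generic point of each type. Second, the equation forces the eigenvalues to ``separate'': each eigenvalue is constant along the other eigendirection (case A), holomorphic in the adapted complex coordinate (case B), or affine along the ``good'' coordinate (case C).

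In case A I would take coordinates $(x,y)$ with $\partial_x,\partial_y$ spanning the two eigenlines — rank-$1$ distributions on a surface are automatically integrable — a choice that is simultaneously $\g1$-orthogonal because distinct eigenspaces of a self-adjoint operator are orthogonal. The separation property then gives eigenvalues $X(x)$ and $Y(y)$, and integrating the remaining scalar components of Sinjukov's equation pins the conformal factor down to $\g1=(X-Y)\big(E(x)\,dx^2\pm F(y)\,dy^2\big)$, which the further rescalings $x\mapsto\tilde x(x)$, $y\mapsto\tilde y(y)$ normalize to the entry in Table~\ref{tab:dini}; the row for $\g2$ then follows from the algebraic identity defining $L$ by direct substitution, and the sign $\pm$ records Riemannian versus Lorentzian signature. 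Cases B and C are the analogous computations carried out with $z=x+iy$ adapted to the isotropic eigenlines of $L$, respectively with coordinates adapted to the $L$-invariant flag $0\subset\ker(L-\mu\,\Id)\subset TM$; in the Jordan case one must keep track of how the single eigenvalue and the off-diagonal entry enter in order to recover the powers of $Y$ appearing in the table.

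The main obstacle is the bookkeeping in the last step: extracting from Sinjukov's equation the exact shape of the conformal factors (and, in case C, the precise $Y^{-4}$ weights), together with the care needed in handling the open dense sets on which the algebraic type of $L$ is constant. This is precisely what the ``around almost every point'' qualifier absorbs, since the locus where eigenvalues collide or the Jordan type jumps is nowhere dense but, in general, nonempty.
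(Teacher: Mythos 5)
The paper itself contains no proof of Proposition~\ref{prop:dini}: it is imported as a classical result with the citations \cite{beltrami_1865,dini_1869,bolsinov_2009}, so there is no internal argument to compare against. Your sketch reproduces the standard proof of the cited reference \cite{bolsinov_2009}: pass to the Benenti tensor $L(\g1,\g2)$, translate projective equivalence into the Sinjukov (special conformal Killing) equation $\nabla_k L_{ij}=\tfrac12\big((\tr L)_{,i}\,g_{jk}+(\tr L)_{,j}\,g_{ik}\big)$, split according to the three possible pointwise algebraic types of a non-scalar $2\times2$ operator, and integrate the equation in coordinates adapted to the eigenstructure. That is the right route and it does produce the three columns of Table~\ref{tab:dini}.

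Three points should be repaired or made explicit. First, your formula for $L$ has the determinant ratio inverted with respect to~\eqref{eqn:benenti.tensor}: the Benenti tensor of the pair is $\big(\det\g2/\det\g1\big)^{1/3}\,\g2^{-1}\g1$, whereas the object you wrote differs from it by the non-constant scalar $|\det\g1/\det\g2|^{2/3}$; self-adjointness survives such a rescaling, but the linear Sinjukov equation does not, so the argument must be run with the normalization of~\eqref{eqn:benenti.tensor}. Second, in the Jordan block case the separation statement is not that the eigenvalue is affine along a coordinate: the single eigenvalue of $L$ is a function of $y$ alone (it is constant along its eigendirection, exactly as in case A), and the affine dependence on $x$ sits in the off-diagonal entry of $L$, equivalently in the conformal factor $1+xY'$ of the normal form. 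Third, the claim that non-proportionality makes $L$ non-scalar on a \emph{dense} open set needs a word of justification: on any open set where $L=\rho\,\Id$, the Sinjukov equation forces $\rho$ to be constant, and then unique continuation for the finite-type linear system~\eqref{eqn:linear.system} propagates the proportionality to the whole (connected) manifold, contradicting the hypothesis; this is standard and is precisely what the ``around almost every point'' clause absorbs, but it is a step, not a tautology.
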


\noindent Note that, if the degree of mobility is precisely one, instead of Proposition~\ref{prop:dini}, the following normal form holds \cite{lie_1882,bryant_2008,matveev_2012}: There exist local coordinates such that $X=\partial_x$ and every metric in the projective class is proportional to a metric
\[
 g = e^{\lambda x}\,\begin{pmatrix} q_1(y) & q_2(y) \\ q_2(y) & q_3(y) \end{pmatrix}
\]
where $q_i$ are functions in one variable.
On the other hand, if the degree of mobility is at least~3, note that Proposition~\ref{prop:dini} still holds, but depending on the choice of the pair $(\g1,\g2)$ the Dini type might differ. Indeed, this phenomenon is observed in~\cite{manno_2018} in the case of one, essential projective vector field and a metric with degree of mobility~3.

\begin{definition}\label{def:dini.basis}
 The coordinates given in Proposition~\ref{prop:dini} are called \emph{Dini coordinates}.
 A \emph{Dini basis} is a basis of the metrization space, see Definition~\ref{def:metrization.space}, whose corresponding metrics, via Formula~\eqref{eqn:sigma.g}, are of the form as in Proposition~\ref{prop:dini}.
 Analogously, if the degree of mobility is 3, a Dini basis is understood to be the basis corresponding to~\eqref{eqn:supint.generators}.
\end{definition}

\noindent Let $X$ be a projective vector field. Furthermore, let $\sigma$ be a solution of~\eqref{eqn:linear.system}. Then, as is well known from e.g.~\cite{bryant_2008,eastwood_2008}, the Lie derivative $\lie_X\sigma$ is also a solution of~\eqref{eqn:linear.system}. Indeed, the Lie derivative w.r.t.~$X$ acts on the Liouville space~$\solSp$ as a linear action
\begin{equation}\label{eqn:LX.action}
 \lie_X : \solSp\to\solSp\,,\qquad \sigma\to\lie_X\sigma\,.
\end{equation}
Let $\phi_t$ be the (local) flow of $X$, then also the pullback
\[
  \phi_t^* : \solSp\to\solSp\,,\qquad \sigma\to\phi_t^*\sigma
\]
acts on the Liouville space~$\solSp$.

If~$X$ is homothetic for $\sigma$, i.e.\ $\lie_X\sigma=\lambda\sigma$ for some $\lambda\in\mathbb{R}$, then $X$ is also homothetic for $\lie_X\sigma$, i.e.\ the action of $\lie_X$ naturally preserves eigenspaces. This immediately leads to the following observation.
\begin{lemma}\label{la:L.X.nonproportional}
 If $X$ is projective, but not homothetic for $\sigma\in\metrSp$, then $\lie_X\sigma\in\solSp$ is a solution that is non-proportional to~$\sigma$.
\end{lemma}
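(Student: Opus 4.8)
The plan is to argue by contradiction: assume $\lie_X\sigma$ is proportional to $\sigma$ and show that the proportionality factor must be a real constant, which by definition makes $X$ homothetic for $\sigma$, contrary to hypothesis. Recall first from the paragraph preceding the statement that $\lie_X\sigma$ already lies in $\solSp$, so the claim is really about the dimension of $\mathrm{span}\{\sigma,\lie_X\sigma\}\subseteq\solSp$. If one reads ``proportional'' as ``proportional by a constant'', the lemma is immediate: $\lie_X\sigma=c\sigma$ with $c\in\R$ is literally the condition that $X$ be homothetic for $\sigma$. The content worth recording — and what is used when Dini's normal forms (Proposition~\ref{prop:dini}) are invoked later — is that one cannot even have $\lie_X\sigma=\lambda\sigma$ with $\lambda$ a non-constant function, so I would spell that out.

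So suppose $\lie_X\sigma=\lambda\sigma$ for a smooth function $\lambda$ on the (connected) domain. If $\lambda\equiv 0$ then $X$ is homothetic for $\sigma$ with factor $0$, a contradiction; so assume $\lambda\not\equiv 0$. Since $\sigma\in\metrSp$, by the remark following Definition~\ref{def:metrization.space} (Lemma~2 of~\cite{matveev_2012}) the open set $U_0$ on which $\sigma$ is nondegenerate is dense. Both $\sigma$ and $\lambda\sigma$ solve the linear system~\eqref{eqn:linear.system}; substituting $\lambda\sigma$, applying the Leibniz rule, and subtracting $\lambda$ times the equation for $\sigma$, every term still containing $\sigma$ cancels, leaving the purely algebraic identity
\[
 (N+1)\,\lambda_{,a}\,\sigma^{bc}=\delta^c_a\,\lambda_{,i}\sigma^{ib}+\delta^b_a\,\lambda_{,i}\sigma^{ic}
\]
valid on the whole domain.

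The remaining step is elementary linear algebra on $U_0$: contracting this identity with $\lambda_{,c}$ and setting $W^b:=\sigma^{ib}\lambda_{,i}$ yields $N\,\lambda_{,a}W^b=\delta^b_a\,(\lambda_{,c}W^c)$. If $d\lambda$ were nonzero at a point of $U_0$, choose an index $a_0$ with $\lambda_{,a_0}\neq 0$; then taking $b\neq a_0$ forces $W^b=0$, and subsequently $b=a_0$ forces $(N-1)\,\lambda_{,a_0}W^{a_0}=0$, hence $W\equiv 0$ at that point; nondegeneracy of $\sigma$ there then gives $\lambda_{,i}=0$ for all $i$, a contradiction. Thus $d\lambda=0$ on the dense open set $U_0$, hence $d\lambda\equiv 0$ by continuity, so $\lambda$ is constant — the case already excluded. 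In particular $\lie_X\sigma\neq 0$ and it is not proportional (pointwise or by a constant) to $\sigma$. I do not expect a genuine obstacle here; the only points requiring care are the bookkeeping in the cancellation that produces the algebraic identity, and using the density of $U_0$ together with continuity of $d\lambda$ to pass from ``$d\lambda=0$ on $U_0$'' to ``$\lambda$ constant''.
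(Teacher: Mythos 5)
Your proposal is correct, and its first observation is precisely the paper's entire argument: since ``homothetic for $\sigma$'' is defined as $\lie_X\sigma=\lambda\sigma$ with $\lambda\in\mathbb{R}$, and proportionality inside the linear space $\solSp$ means proportionality by a constant, the lemma is just the contrapositive of that definition combined with the previously stated fact that $\lie_X\sigma\in\solSp$; this is why the paper introduces it with ``this immediately leads to the following observation'' and gives no separate proof. What you add goes beyond the paper: you rule out even pointwise proportionality $\lie_X\sigma=\lambda\sigma$ with a non-constant function $\lambda$, by feeding $\lambda\sigma$ into the metrizability equation \eqref{eqn:linear.system}, cancelling against $\lambda$ times the equation for $\sigma$ to get the algebraic identity $(N+1)\lambda_{,a}\sigma^{bc}=\delta^c_a\lambda_{,i}\sigma^{ib}+\delta^b_a\lambda_{,i}\sigma^{ic}$, and then showing $d\lambda=0$ wherever $\sigma$ is nondegenerate (your contraction argument is sound, using $N\geq2$, and the density of the nondegeneracy set for $\sigma\in\metrSp$ is exactly the remark after Definition~\ref{def:metrization.space}). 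This strengthening is not needed for the lemma as stated — and is not what the later applications formally rely on, since they work with constant coefficients in the finite-dimensional space $\solSp$ — but it is a correct and genuinely informative supplement that the paper does not prove.
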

\noindent This observation allows us, for metrics with an essential projective vector field and with mobility degree~$2$, to easily construct the entire metrization space from the projective vector field $X$ and the metric~$g$ corresponding to $\sigma\in\metrSp$.
This is going to be discussed in more detail below in Proposition~\ref{prop:IX.independence}.

\begin{definition}
For a projective connection with precisely one essential projective vector field (up to rescaling), we define the \emph{essential metrizability space} $\essSp\subset\metrSp$ as the subset of solutions of~\eqref{eqn:linear.system} that correspond to metrics, via Formula~\eqref{eqn:sigma.g}, such that the projective symmetry is essential.
\end{definition}

\noindent Note that, by definition, we have the following tower of inclusions: $\essSp\subseteq\metrSp\subset\solSp$.\smallskip

\noindent Given a pair of solutions to~\eqref{eqn:linear.system}, we can define a $(1,1)$-tensor field (i.e., an endomorphism). Before we give a formal definition, let us briefly digress on the significance of this object, which is known as the Benenti tensor (or sometimes, although ambiguous, special conformal Killing tensor). It was formally introduced, as a $(2,0)$-tensor on Riemannian manifolds, in \cite{benenti_1992}. In later works by S.~Benenti, they are referred to as $L$-tensors, e.g.~\cite{benenti_2005,benenti_2008}.
In~\cite{matveev_2012rel,bolsinov_2009sg,manno_2018ben}, and many other references, Benenti tensors appear as powerful tools, not least for projective differential geometry.
For instance, eigenvalues of the $(1,1)$-Benenti tensor are closely linked to orthogonal separation coordinates (cf., e.g., Killing-St\"ackel spaces).
There exists a number of applications in physics, see e.g.~\cite{matveev_2012rel,crampin_2000} or~\cite{ibort_2000,bolsinov_2003}. The latter references contain the following definition.
\begin{definition}[\cite{bolsinov_2003,ibort_2000}]\label{def:benenti.g}
	Let $g$ be a Riemannian metric on a $N$-dimensional manifold $M$. A non-degenerate $(1,1)$-tensor field $L$ on $M$ is called a \emph{Benenti tensor field with respect to $g$} if $L$ is self-adjoint and satisfies
	\begin{enumerate}
		\item The Nijenhuis tensor of $L$ is identically zero;
		\item For the functions $H:=\frac12 g^{ij}p_ip_j$, $F:=g^{ik}L^j_kp_ip_j$, we have
		\[
		\{H,F\}=2H\left( \frac{\partial\mathrm{trace}(L)}{\partial x^i}g^{ij}p_j \right)
		\]
		where $x^i,p_j$ are the standard coordinates on the cotangent bundle $T^*\!M$ and $\{\,,\,\}$ is the standard Poisson bracket between functions on $T^*\!M$.
	\end{enumerate}
\end{definition}

\noindent In the context of metric projective differential geometry, i.e.\ for a pair $g,g'$ of projectively equivalent metrics, the Benenti tensor can be defined as follows.
\begin{definition}[\cite{bolsinov_2003} and {\cite[Sect.~5]{benenti_2005}}]
 Let $g,\bar{g}$ be projectively equivalent metrics. Their \emph{associated Benenti tensor} is
 \begin{equation}\label{eqn:benenti.tensor}
   L(g,\bar{g}) := \left(\frac{\det(\bar{g})}{\det(g)}\right)^{\frac{1}{N+1}}\,\bar{g}^{-1}g\,.
 \end{equation}
\end{definition}

\noindent Indeed, for Riemannian metrics, the Benenti tensor~\eqref{eqn:benenti.tensor} is a Benenti tensor for $g$ in the sense of Definition~\ref{def:benenti.g}.
Definition~\ref{def:benenti.g} reveals, as well, the close relation of Benenti tensors, projective differential geometry and integrals of motion, i.e.\ conserved quantities in classical mechanics. This relation will be examined more closely in Section~\ref{sec:integration.of.connection}, see also~\cite{topalov_2003,bryant_2008,manno_2018ben} and references therein.

An integral of motion is a function on the cotangent space $I:T^*M\to\mathbb{R}$ (or, analogously, on the tangent space) such that the Poisson bracket with the Hamiltonian $H=\frac12g^{ij}p_ip_j$ vanishes, $\{H,I\}=0$.\footnote{Note that we consider a free Hamiltonian system, i.e.\ the Hamiltonian is determined by the metric~$g$ exclusively. For the case when the Hamiltonian includes also a potential term, i.e.\ $H=\frac12g^{ij}p_ip_j+V$ with a function $V:T^*M\to\mathbb{R}$, see~\cite{valent_explicit_2015,vollmer_2018}, for instance.}
A classical question is how many integrals of motion a given Hamiltonian system admits, and which Hamiltonian systems admit a large number of such integrals. For the purposes of the current paper, we introduce the notion of superintegrability.
\begin{definition}\label{def:superintegrability}
 A Hamiltonian system is said to be \emph{superintegrable} if it admits $2n-1$ functionally independent integrals of motion including the Hamiltonian itself. Equivalently, this can be expressed by the existence of two distinct Benenti tensors, $L_1=L(\sigma,\bar{\sigma})$ and $L_2=L(\sigma,\hat{\sigma})$, for the metric~$g$ corresponding to~$\sigma$.
\end{definition}

\noindent Similarly, we may define the concept of integrable metrics as follows:
A pair $(g,L)$ is called a \emph{$2$-dimensional integrable system} if $g$ is a $2$-dimensional metric and~$L$ a Benenti tensor for it.

A superintegrable system admits a `maximal' number of integrals assuming their functional independence.
It is common to require, in addition, that the integrals are polynomials in the fibre coordinates\footnote{Depending on the context, we will express integrals as polynomials either in momenta or velocities, whichever is more convenient.}.
Let us assume the integral $I$ is a homogeneous polynomial in the~$p_i$ of degree~$d$. It is well known that such integrals correspond to Killing tensors, i.e.\ symmetric $d$-tensor fields $K_{i_1\dots i_d}$ that satisfy \cite{topalov_2003}
\begin{equation}\label{eqn:killing.tensor}
K_{(i_1\dots i_d,i_{d+1})} = 0\,.
\end{equation}
If all integrals are polynomials of at most degree $k$, the superintegrable system is said to be \emph{of order~$k$}, see~\cite{kalnins_2018} for a comprehensive introduction. Second order superintegrable systems on constant curvature manifolds (and conformally flat spaces) are essentially classified (in terms of normal forms) in dimension~2~\cite{kalnins_2001} and~3~\cite{kalnins_III,capel_2014}. Partially, a classification in terms of an algebraic variety has been reached, see~\cite{kress_2018,capel_2014}. Higher order superintegrability is less understood today. Some recent papers on superintegrable systems involving integrals of order~3 are, for instance, \cite{matveev_2011supint,ranada_drach_1997,valent_explicit_2015,marquette_superintegrable_2008,popper_third_2012,tremblay_third_2010}.

\section{Overview of the paper and summary of main outcomes}\label{sec:main}

Sections~\ref{sec:classification} to~\ref{sec:proof.superintegrable} are the main body of this present work.
In Section~\ref{sec:classification}, we introduce some terminology and discuss briefly the strategy that is employed in earlier papers, namely~\cite{matveev_2012,manno_2018} to obtain the normal forms of metrics with one, essential projective symmetry. The section is concluded with an improvement of two of the existing normal forms. Concretely, we remove the remaining ambiguity by introducing normal forms in terms of hypergeometric special functions.
Next, in Section~\ref{sec:projective.vector.fields.integrals}, we review projective connections as quotient systems from an ODE perspective and discuss how integrals of these quotient systems can be obtained from integrals of the geodesic equation of a particular representative of the projective class. We also discuss the relation between integrals and projective symmetries of metrizable projective connections.
Sections~\ref{sec:proof.distinguished.coordinates} and~\ref{sec:proof.superintegrable} provide the proof of the two main theorems, outlined below in the present section.
Finally, in Appendices~\ref{app:normal.forms.dom.2} and~\ref{app:normal.forms.dom.3}, we review the existing classification in terms of normal forms of metrics that admit one, essential projective symmetry.
This classification is going to be referred to in many parts of the present paper.

Before we start with the detailed outline of the main results, let us digress briefly on the terminology that is going to be used in the remainder of the paper. Specifically, concerning terminology, we are faced with the following issue:
We are dealing with three major sets of characteristic data, specifying either a projective class, a metric within a particular projective class (respectively, a solution of~\eqref{eqn:linear.system.2D}), or a point on a differentiable manifold.
In order to avoid ambiguities let us therefore agree, once and for all, on the following terminology.
\begin{enumerate}[font=\bfseries,noitemsep,labelsep=2pt,align=left]
	\item[Variables] specify a point on a differentiable manifold and will typically be called $x,y$, unless stated otherwise.
	\item[Coordinates] reference points within a given metrization space. We denote them, typically, by $(u_1,u_2)$ if they refer to the natural (Dini) basis introduced in Definition~\ref{def:dini.basis}, see also \cite{matveev_2012,manno_2018}.
	\item[Distinguished coordinates] on connected components of the essential metrizability space~$\essSp\subset\solSp$ are going to be introduced later, and are typically denoted by $(s,u)$.
	\item[Parameters] specify a projective class, particularly such which contains metrics with one, essential projective symmetry. Typically we use the letters $\xi,\lambda$ as well as $C,h,\varepsilon$, consistent with~\cite{matveev_2012,manno_2018}. The different roles that these parameters play are reviewed in Appendix~\ref{app:normal.forms.dom.2}.
\end{enumerate}
The terminology is briefly summarized in Table~\ref{tab:terminology}.

\begin{table}[!ht]
	\begin{center}
		\begin{tabular}{llc}
			\toprule
			\textbf{Data} & \textbf{Space} & \textbf{Notation} \\
			\midrule
			Variables & differentiable manifold $M$ & $x,y$ \\
			(Dini) coordinates & Liouville metrization space $\solSp$ & $u_1,u_2$ \\
			(Distinguished) coordinates & connected component of the space $\essSp\subseteq\solSp$ & $s,u$ \\
			Parameters & (family of) projective classes & $\xi,\lambda,C,h,\varepsilon$ \\
			\bottomrule
		\end{tabular}
		\caption{Terminology used for the specification of points (on a manifold), metrics (within a particular projective class) and to single out a specific projective class.}\label{tab:terminology}
	\end{center}
\end{table}

\begin{proposition}\label{prop:uniqueness}
 The normal forms (C.8) and (C.9) in Table~\ref{tab:mobility2} can, almost everywhere on their domain, be expressed in terms of hypergeometric polynomials.
\end{proposition}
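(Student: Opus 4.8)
The plan is to start from the explicit shape of the normal forms (C.8) and (C.9) as recorded in Table~\ref{tab:mobility2} and Appendix~\ref{app:normal.forms.dom.2}: in each case the metric is assembled from a single function of one variable — denote it $w$ — which, once the essential projective symmetry and the non-degeneracy built into Definition~\ref{def:metrization.space} are imposed, is no longer free but is forced to satisfy an explicit linear second-order ODE $w'' + p(t)\,w' + q(t)\,w = 0$ with rational coefficients. First I would write this ODE out and locate its singular points in the complexified independent variable; in both cases one finds a Fuchsian equation with three regular singular points, or a confluent degeneration thereof (matching the error-function expressions $\erf$, $\erfi$ that appear elsewhere in the classification). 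A Möbius transformation of $t$ normalizing the singular points to $0,1,\infty$, followed by a substitution $w=t^{\alpha}(1-t)^{\beta}\,\tilde w$ (respectively $w=t^{\alpha}e^{\gamma t}\,\tilde w$ in the confluent case) removing the fractional exponents, brings the equation to the Gauss hypergeometric form ${}_2F_1(a,b;c;t)$ (respectively to Kummer's equation), and one reads off the parameters $(a,b;c)$ from the local exponents, which are the roots of the indicial equations expressed algebraically in the classification parameters $\xi,\lambda,C,h,\varepsilon$.

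The second step is the arithmetic observation that makes the proposition true: for the parameter values that actually occur in (C.8) and (C.9), the relevant hypergeometric parameter — say $a$, corresponding to a distinguished exponent difference — is a non-positive integer. Consequently the solution selected by the non-degeneracy requirement is a terminating hypergeometric series, i.e.\ up to the elementary prefactor $t^{\alpha}(1-t)^{\beta}$ it is a classical orthogonal polynomial of Jacobi type (a hypergeometric polynomial), or of Laguerre/Hermite type in the confluent case. Substituting this back into the formulas of Table~\ref{tab:mobility2} rewrites the metrics (C.8) and (C.9) purely in terms of these hypergeometric polynomials, which is the claim. The qualifier \emph{almost everywhere on their domain} enters exactly because the Möbius transformation and the prefactor $t^{\alpha}(1-t)^{\beta}$ (or $t^{\alpha}e^{\gamma t}$) degenerate on a nowhere-dense subset of the coordinate domain — the zero loci of finitely many of the coefficients involved — just as the original normal forms are themselves valid only off such loci; away from that set the rewriting is an honest equality of smooth metrics.

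I expect the main obstacle to be twofold. First, the bookkeeping: one must carry the parameters through the indicial equations and the Riemann-scheme reduction without error, and — more subtly — choose the prefactor so that the non-polynomial part is entirely absorbed, so that what remains is \emph{literally} a polynomial in $t$ rather than a polynomial up to an algebraic factor. Second, and this is where the specific structure of the degree-of-mobility-two, one-essential-symmetry classification must be used rather than a generic hypergeometric manipulation, one has to prove that the integrality of the hypergeometric parameter is \emph{forced} throughout the parameter range of (C.8) and (C.9) — equivalently, that the governing ODE always admits a polynomial solution compatible with the non-degeneracy constraints — and not merely possible for special parameter values. It is precisely this rigidity, inherited from the projective symmetry, that singles out (C.8) and (C.9) among the cases that genuinely require transcendental hypergeometric functions.
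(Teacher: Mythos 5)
Your overall skeleton agrees with the paper's: the unevaluated integrals $Y$, $Y_\lambda$ in (C.8), (C.9) are antiderivatives of solutions of explicit second-order linear ODEs (quoted from the earlier classification paper), namely $y^2Y_1''-\tfrac12(y-3)Y_1'+\tfrac12 Y_1=0$ and $(y^2+1)\,\Xi_\lambda''-\tfrac12(y-3\lambda)\,\Xi_\lambda'+\tfrac12\,\Xi_\lambda=0$, and the paper's proof consists precisely in solving these in closed hypergeometric form. The genuine gap is your second step, the ``arithmetic observation'' that the relevant hypergeometric parameter is a non-positive integer, so that the selected solution is a terminating series and hence a Jacobi/Laguerre/Hermite polynomial. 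This is false, and it is the load-bearing step of your argument. For (C.8) the confluent equation produces ${}_1F_1\!\left(\tfrac12,\tfrac32,\tfrac{3}{2y}\right)$; the parameter $\tfrac12$ is not a non-positive integer, and the resulting function is the error function, $Y=\erf(\nicefrac{1}{\sqrt{y}})$ or $\erfi(\nicefrac{1}{\sqrt{y}})$, which is genuinely transcendental. For (C.9) the parameters that come out are $-\tfrac{3i}{4}(\lambda+i)=\tfrac34-\tfrac{3i\lambda}{4}$ and $-\tfrac{3i}{4}(\lambda+3i)$, which for the real values $\lambda\geq 0$ of the classification are never non-positive integers, so again no termination. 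Worse, the only polynomial solutions these ODEs do admit are the linear ones, $y-3$ and $3\lambda-y$ respectively; they give $Y=\const$, i.e.\ exactly the degenerate situation excluded by the normal forms, so the rigidity you hoped would force integrality in fact selects the \emph{non}-polynomial branch.

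The resolution is that ``hypergeometric polynomials'' in the proposition (see the paper's Lemmas on (C.8) and (C.9)) is used loosely: the claim is only that the unspecified antiderivatives can be replaced by closed-form expressions in (confluent/generalized) hypergeometric special functions, thereby removing the integration-constant ambiguity; it does not assert that $Y$ or $Y_\lambda$ are literal polynomials. So the correct completion of your plan is to stop after the reduction to Kummer/hypergeometric form, write out the two-dimensional solution space explicitly, discard the linear (degenerate) solution, and identify the remaining one: for (C.8) one may take $Y=\erfi(\nicefrac{1}{\sqrt{y}})$ for $y<0$ and $Y=\erf(\nicefrac{1}{\sqrt{y}})$ for $y>0$, and for (C.9) one obtains an expression of the form $(y+i)^{-\frac{i}{4}(3\lambda+5i)}\,\hypergeom\!\left(\left[-\tfrac{3i}{4}(\lambda+i)\right],\left[-\tfrac{3i}{4}(\lambda+3i)\right],-\tfrac{i}{2}(y+i)\right)$ up to the linear solution. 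No integrality or orthogonal-polynomial argument is available, nor is one needed.
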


\noindent In dimension~2, metrics with one, essential projective symmetry typically have degree of mobility~2, compare Appendices~\ref{app:normal.forms.dom.2} and~\ref{app:normal.forms.dom.3}. The following theorem, one of the main results of the current paper, describes the geometry of these metrics within their projective classes.

Particularly, the first part of the theorem states how to canonically divide the solution space~$\solSp$ into a number of components and how this division is related to the action of the projective symmetry.
The second part provides distinguished coordinates that can be employed to find the normal forms for metrics with one essential normal forms given in Appendix~\ref{app:normal.forms.dom.2}. This procedure has indeed already been executed, for the case of Section~\ref{sec:proof.distinguished.coordinates.b.3} of the present paper, in~\cite{manno_2018}.
\begin{theorem}\label{thm:distinguished.coordinates}
 Let $g$ be a 2-dimensional metric with degree of mobility 2.
 Let us furthermore assume that $g$ admits exactly one projective vector field $X$ (up to a factor), and that $X$ is essential (i.e., non-homothetic).\smallskip

 \noindent
 a) Let $r$ be the number of real eigenvalues of $\lie_X:\solSp\to\solSp$.
 The essential metrizability space $\essSp\subset\solSp$ decomposes into $2^r$ connected components.
 The connected components are, respectively:
 \begin{itemize}[label={---}]
 	\item If $2\geq r>0$, we have $2^r$ copies of~$\mathbb{R}^2$. Some of them might be diffeomorphic, e.g.\ in the complex Liouville normal forms and the \emph{special cases} of the classification (see Table~\ref{tab:normal.forms.dom.2} in Appendix~\ref{app:normal.forms.dom.2}).
 	\item If no real eigenvalue exists, there is one component that is isomorphic to $\mathbb{S}^1\times\mathbb{R}$.
 \end{itemize}
 \smallskip

 \noindent
 b) There are distinguished coordinates on each connected component of $\essSp$, adjusted to the flow of $X$.
 Let us call~$(s,u)$ the adjusted coordinates such that~$s$ is a parameter along the orbit of the projective flow and such that~$u$ is constant along orbits.
 In terms of Dini coordinates, we find that a solution~$\sigma$ of~\eqref{eqn:linear.system} can be expressed as
 \[
   \sigma = f_1(s,u)\,\sigma_1+f_2(s,u)\,\sigma_2
 \]
 where $(\sigma_1,\sigma_2)$ is a Dini basis as detailed in Table~\ref{tab:dini}. The functions $f_i$ are functions on $\mathbb{R}^2_{(s,u)}$ as in Table~\ref{tab:mobility2}.
\end{theorem}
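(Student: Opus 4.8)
The plan is to establish both parts by exploiting the linear action $\lie_X:\solSp\to\solSp$ together with the concrete structure of the Dini normal forms (Table~\ref{tab:dini}). Since $\dim\solSp=2$, the endomorphism $\lie_X$ is a $2\times 2$ matrix, and its Jordan type over $\mathbb R$ governs everything. I would first record that, by Lemma~\ref{la:L.X.nonproportional}, essentiality of $X$ forces $\lie_X$ to be non-scalar (if $\lie_X\sigma=\lambda\sigma$ for all $\sigma$, then $X$ is homothetic for every metric in the class, contradicting essentiality of $X$ for at least one metric, and in the mobility-2 case this propagates to the whole pencil). Hence $\lie_X$ has one of three normal forms: two distinct real eigenvalues ($r=2$), a single real eigenvalue with a nontrivial Jordan block ($r=1$, and here one must check the block is genuinely nondiagonalizable, again using Lemma~\ref{la:L.X.nonproportional}), or a pair of complex-conjugate eigenvalues ($r=0$).

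For part (a), the strategy is to pick a basis $(\sigma_1,\sigma_2)$ of $\solSp$ adapted to $\lie_X$ and identify $\solSp\cong\R^2$ with coordinates $(u_1,u_2)$. The metrizability space is $\solSp\setminus\{0\}$ (by the Remark following Definition~\ref{def:metrization.space}), so the question is which punctured directions lie in $\essSp$, i.e.\ for which $\sigma$ the symmetry $X$ is non-homothetic. A direction $\sigma$ is homothetic exactly when $\sigma$ is an eigenvector of $\lie_X$; so $\essSp$ is $\solSp$ minus the eigen-directions. In the real diagonalizable case ($r=2$) we remove two lines through the origin, leaving $4=2^2$ open quadrants, each diffeomorphic to $\R^2$; in the single-real-eigenvalue case ($r=1$) we remove one line, leaving $2=2^1$ half-planes, again each $\cong\R^2$; in the complex case ($r=0$) no eigen-direction is real, so we remove only the origin, and $\solSp\setminus\{0\}\cong\S\times\R$. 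The "some of them might be diffeomorphic" clause is then handled by inspecting which components are exchanged by the residual coordinate symmetries of each Dini normal form (e.g.\ the $z\leftrightarrow\bar z$ and sign symmetries in the complex Liouville case, and the enumerated special cases in Appendix~\ref{app:normal.forms.dom.2}); this is a finite case-check against Table~\ref{tab:normal.forms.dom.2}.

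For part (b), on a fixed connected component I would build the distinguished coordinates $(s,u)$ by integrating the flow: the pullback $\phi_t^*$ and the infinitesimal generator $\lie_X$ act linearly on $\solSp\cong\R^2$, and in the adapted basis this linear flow is explicitly $e^{t\lie_X}$. Choosing $u$ to be a ratio of the coordinates that is invariant under $e^{t\lie_X}$ (a cross-ratio-type invariant: $u_1/u_2$ in the real-diagonal case, $u_1/u_2$ or $\log$ thereof in the Jordan case, the angular coordinate in the complex case) makes $u$ constant along orbits, and taking $s$ to be the flow parameter (a suitable logarithm or linear function of the remaining coordinate) makes $\partial_s$ generate the $X$-action. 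Writing a general $\sigma$ in the Dini basis $(\sigma_1,\sigma_2)$ and re-expressing the coordinates $(u_1,u_2)$ in terms of $(s,u)$ then yields $\sigma=f_1(s,u)\sigma_1+f_2(s,u)\sigma_2$ with $f_i$ explicit exponentials/trigonometric functions; one then matches these against the entries of Table~\ref{tab:mobility2} case by case, which—as noted in the text—has effectively been carried out already for one of the cases in \cite{manno_2018}.

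The main obstacle I anticipate is not the linear-algebra skeleton but the bookkeeping in the Jordan-block and complex-Liouville cases: one must verify that the $\lie_X$-action really is non-semisimple (resp.\ really has non-real spectrum) rather than merely potentially so, and this requires plugging the Dini normal forms of Table~\ref{tab:dini} into the linear system \eqref{eqn:linear.system.2D}, computing $\lie_X\sigma$ for the two basis solutions, and reading off the matrix of $\lie_X$ explicitly for each normal form. Equally delicate is confirming that the coordinate changes $(u_1,u_2)\mapsto(s,u)$ are genuine diffeomorphisms onto $\R^2$ (resp.\ $\S\times\R$) on each component—in particular that $s$ ranges over all of $\R$—and that the resulting $f_i$ are exactly the functions tabulated in Table~\ref{tab:mobility2}, including the degenerate "special cases" where distinct components coincide.
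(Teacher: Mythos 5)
Your proposal follows essentially the same route as the paper: part (a) by deleting the $\lie_X$-eigendirections from $\metrSp=\solSp\setminus\{0\}$ and counting the resulting components ($2^r$ copies of $\mathbb{R}^2$, or $\mathbb{S}^1\times\mathbb{R}$ when $r=0$), and part (b) by exponentiating $\lie_X$ in each of the three Jordan-type cases of Lemma~\ref{la:normal.forms.Lw} and extracting an orbit-invariant $u$ together with a flow parameter $s$, which is precisely what Lemmas~\ref{la:distinguished.coords.1}--\ref{la:distinguished.coords.3} do. The only slip is in your illustrative invariants: in the diagonal case the correct invariant is $u_1/|u_2|^{\lambda}$ (the ratio $u_1/u_2$ is invariant only for $\lambda=1$), and in the Jordan-block case it is $e^{u_2/u_1}/u_1$, as recorded in Table~\ref{tab:mobility2}.
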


\begin{table}[!ht]
\begin{center}
\textbf{The distinguished coordinates for degree of mobility 2}\smallskip

\begin{tabular}{cg{2cm}x{2.3cm}g{2cm}x{2.1cm}}
	\toprule
	& (I) & (II) & (III${}_0$) & (III${}_\lambda$) \\
	\toprule
	$r$ (real eigenvalues) & 2 & 1 & 0 & 0 \\
	\toprule
	$\sigma$ (parametrization) &
	\begin{minipage}{2cm}\centering $ue^{\lambda s}\,\sigma_1$ \\ $+e^s\,\sigma_2$ \end{minipage} &
	\begin{minipage}{2.3cm}\centering $e^s\,\sigma_1+$ \\ $e^s\,(s+\ln(u))\,\sigma_2$ \end{minipage} &
	\begin{minipage}{2cm}\centering $u\sin(s)\,\sigma_1$ \\ $+u\cos(s)\,\sigma_2$ \end{minipage} &
	\begin{minipage}{2.1cm}\centering $ue^{\lambda s}(\sin(s)\sigma_1$ \\ $+\cos(s)\,\sigma_2)$ \end{minipage} \\
	\midrule
	$u$ (invariant parameter) &
	\begin{minipage}{2cm}\centering $\frac{|u_1|}{|u_2|^\lambda}$ \\ ($\lambda=\frac{\lambda_2}{\lambda_1}$) \end{minipage} &
	$\frac{1}{|u_1|}\,e^\frac{u_2}{u_1}$ &
	$u_1^2+u_2^2$ &
	\begin{minipage}{2cm}\centering $\frac{\ln(u_1^2+u_2^2)}{2\lambda}$ \\ $-\arctan\left(\frac{u_1}{u_2}\right)$ \end{minipage}
	\\
	\midrule
	$s$ (orbit paramter) &
	$\ln(u_1)$ & $\ln(u_1)$ & $\arctan\left(\frac{u_2}{u_1}\right)$ & $\arctan\left(\frac{u_2}{u_1}\right)$
	\\
	\midrule
	orbits &
	\begin{minipage}{2cm}
	\centering
	\begin{tikzpicture}[thick,scale=0.6, every node/.style={scale=0.6}]
	  \pgfmathsetmacro{\rx}{0.00099}
	  \pgfmathsetmacro{\ry}{0.0004}
	  \pgfmathsetmacro{\kx}{0.4}
	  \pgfmathsetmacro{\ky}{0.5}
	  \draw[->] (0,-0.5) -- (0,1.5) node[above] {$u_2$};
	  \draw[->] (-0.5,0) -- (1.5,0) node[right] {$u_1$};
	  \draw[scale=1,domain=-2500:1100,smooth,variable=\t,black,dashed] plot ({exp(\rx*\t)*\kx},{exp(\ry*\t)*\ky});
	\end{tikzpicture}
	\\
	\begin{tikzpicture}[thick,scale=0.6, every node/.style={scale=0.6}]
	  \pgfmathsetmacro{\rx}{0.00099}
	  \pgfmathsetmacro{\ry}{-0.0004}
	  \pgfmathsetmacro{\kx}{0.4}
	  \pgfmathsetmacro{\ky}{0.5}
	  \draw[->] (0,-0.5) -- (0,1.5) node[above] {$u_2$};
	  \draw[->] (-0.5,0) -- (1.5,0) node[right] {$u_1$};
	  \draw[scale=1,domain=-2500:1100,smooth,variable=\t,black,dashed] plot ({exp(\rx*\t)*\kx},{exp(\ry*\t)*\ky});
	\end{tikzpicture}
	\end{minipage}
	&
	\begin{minipage}{2cm}
	\centering
	\begin{tikzpicture}[thick,scale=0.6, every node/.style={scale=0.6}]
		\pgfmathsetmacro{\kx}{0.5}
		\pgfmathsetmacro{\ky}{0.002}
		\draw[->] (0,-0.5) -- (0,1.5) node[above] {$u_2$};
		\draw[->] (-1,0) -- (1,0) node[right] {$u_1$};
		\draw[scale=1,domain=-10:0.9,smooth,variable=\t,black,dashed] plot ({exp(\t)*\kx},{exp(\t)*\ky+\t*exp(\t)*\kx});
	\end{tikzpicture}
	\end{minipage}
	&
	\begin{minipage}{2cm}
	\centering
	\begin{tikzpicture}[thick,scale=0.6, every node/.style={scale=0.6}]
	  \pgfmathsetmacro{\kx}{0.3}
	  \pgfmathsetmacro{\ky}{0.5}
	  \draw[->] (0,-1) -- (0,1) node[above] {$u_2$};
	  \draw[->] (-1,0) -- (1,0) node[right] {$u_1$};
	  \draw[scale=1,domain=-180:180,smooth,variable=\t,black,dashed] plot ({\kx*cos(\t)-\ky*sin(\t)},{\kx*sin(\t)+\ky*cos(\t)});
	\end{tikzpicture}
	\end{minipage}
	&
	\begin{minipage}{2cm}
	\begin{tikzpicture}[thick,scale=0.6, every node/.style={scale=0.6}]
	  \pgfmathsetmacro{\lam}{0.003}
	  \pgfmathsetmacro{\kx}{0.05}
	  \pgfmathsetmacro{\ky}{0.05}
	  \draw[->] (0,-1) -- (0,1) node[above] {$u_2$};
	  \draw[->] (-1,0) -- (1,0) node[right] {$u_1$};
	  \draw[scale=1,domain=-10:800,smooth,variable=\t,black,dashed] plot ({exp(\lam*\t)*(\kx*cos(\t)-\ky*sin(\t))},{exp(\lam*\t)*(\kx*sin(\t)+\ky*cos(\t))});
	 \end{tikzpicture}
	 \end{minipage} \\
	\bottomrule
\end{tabular}
\end{center}
\caption{The table shows the distinguished coordinates~$(s,u)$ on the Liouville metrization space~$\solSp$, or rather on the essential metrizability space~$\essSp$, in comparison with the standard Dini coordinates~$(u_1,u_2)$ used in~\cite{matveev_2012,manno_2018}. The parameter $\lambda$ characterizes the eigenvalue structure of $\lie_X$: In case (I),~$\lambda$ is the ratio between the two eigenvalues (see~\eqref{eqn:normal.forms.Lw} below). In case (III), $\lambda$ is the ratio between the real and imaginary part of the eigenvalue ($\lambda=0$ implies the eigenvalues are purely imaginary). The last row sketches the orbits of the action of~$X$ in~$\solSp$. In the case (I), the geometric shape of the orbit depends on the sign of $\lambda$ (above: $\lambda>0$, below: $\lambda<0$). For the case (III), there are two columns as the situation for vanishing and non-vanishing $\lambda$ are essentially different.}\label{tab:mobility2}
\end{table}

\noindent The second main result concerns superintegrable systems. Recall from Definition~\ref{def:superintegrability} that, in dimension~2, a superintegrable system requires the existence of 3 functionally independent integrals of motion, including the Hamiltonian itself. Since we know, explicitly in terms of a normal form, the metrics with one, essential projective vector field that have degree of mobility 3, we may ask which of these systems indeed constitute superintegrable metrics.
\begin{theorem}\label{thm:superintegrable.systems}
Let $g$ be a 2-dimensional metric with degree of mobility 3.
Let us furthermore assume that $g$ admits exactly one projective vector field~$X$ (up to a constant factor), and that $X$ is essential (i.e., non-homothetic). Then~$g$ gives rise to free superintegrable systems,
where $g$ corresponds to $\sigma=\sigma[\theta,\varphi]$ and
\begin{subequations}\label{eqn:new.superintegrable.basis}
\begin{align}
 \sigma &= \sin(\theta)\,\cos(\varphi)\,\sigma_1 +\sin(\theta)\,\sin(\varphi)\,\sigma_2 +\cos(\theta)\,\sigma_3 \\
 \bar{\sigma} &= \cos(\theta)\cos(\varphi)\,\sigma_1 +\cos(\theta)\sin(\varphi)\,\sigma_2 -\sin(\theta)\,\sigma_3 \\
 \hat{\sigma} &= -\sin(\varphi)\,\sigma_1+\cos(\varphi)\,\sigma_2
\end{align}
\end{subequations}
where $\sigma_i$ are obtained via Formula~\eqref{eqn:sigma.g} from $g_i$, see~\eqref{eqn:supint.generators} in Appendix~\ref{app:normal.forms.dom.3}.
Indeed, these superintegrable systems are free second order maximally superintegrable systems in the usual sense.
\end{theorem}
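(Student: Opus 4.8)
The plan is to deduce superintegrability from the three‑dimensionality of the Liouville space together with the explicit normal forms of Appendix~\ref{app:normal.forms.dom.3}. Since $g$ has degree of mobility~$3$, the Liouville space $\solSp$ is three‑dimensional, and by the classification recalled in Appendix~\ref{app:normal.forms.dom.3} it carries the distinguished basis $(\sigma_1,\sigma_2,\sigma_3)$ obtained through~\eqref{eqn:sigma.g} from the normal‑form metrics $g_1,g_2,g_3$ of~\eqref{eqn:supint.generators}; these $\sigma_i$ are eigenvectors of the endomorphism $\lie_X\colon\solSp\to\solSp$ of~\eqref{eqn:LX.action}, with three pairwise distinct real eigenvalues. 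Solutions of~\eqref{eqn:linear.system} differing by a positive scalar give homothetic metrics via~\eqref{eqn:sigma.g}, while $\sigma$ and $-\sigma$ give the sign‑reversed (hence a priori distinct) metrics, so the metrics of the projective class are parametrised by the rays of $\solSp\cong\R^3$, i.e.\ by $\mathbb{S}^2$. In spherical coordinates $\sigma[\theta,\varphi]=\sin\theta\cos\varphi\,\sigma_1+\sin\theta\sin\varphi\,\sigma_2+\cos\theta\,\sigma_3$, and completing this unit vector to the standard orthonormal moving frame of $\R^3$ adapted to spherical coordinates reproduces exactly the triple~\eqref{eqn:new.superintegrable.basis}, with $\bar\sigma=\partial_\theta\sigma$ and $\partial_\varphi\sigma=\sin\theta\,\hat\sigma$. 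A metric $g[\sigma']$ in the class admits $X$ as a homothety, $\lie_X g[\sigma']=\mu\,g[\sigma']$ with $\mu$ constant, exactly when $\lie_X\sigma'=-\tfrac13\mu\,\sigma'$, i.e.\ when $\sigma'$ is proportional to one of $\sigma_1,\sigma_2,\sigma_3$; the corresponding three eigenlines meet $\mathbb{S}^2$ in the six exceptional points (two of which, $\pm\sigma_3$, are the chart poles), and these are excluded by the hypothesis that $X$ is essential. Conversely, by the classification every metric meeting the hypotheses equals $g[\sigma[\theta,\varphi]]$ for suitable $(\theta,\varphi)$ avoiding those points.

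As recalled in Section~\ref{sec:integration.of.connection} (and classically), a solution $\bar\sigma$ of~\eqref{eqn:linear.system} non‑proportional to $\sigma$ produces, via the Benenti tensor $L(\sigma,\bar\sigma)$ of~\eqref{eqn:benenti.tensor} for $g=g[\sigma]$, a first integral of the geodesic flow of $g$ which is quadratic and homogeneous in the momenta — hence a Killing tensor in the sense of~\eqref{eqn:killing.tensor} — and is not proportional to $H=\tfrac12g^{ij}p_ip_j$. Applying this to $\bar\sigma$ and to $\hat\sigma$ from~\eqref{eqn:new.superintegrable.basis} gives two such integrals $F_1$ and $F_2$; since $\sigma,\bar\sigma,\hat\sigma$ form a basis of $\solSp$, the Benenti tensors $L_1=L(\sigma,\bar\sigma)$ and $L_2=L(\sigma,\hat\sigma)$ are distinct, so $(H,F_1,F_2)$ realises the configuration of Definition~\ref{def:superintegrability}. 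Everything then reduces to proving that $H,F_1,F_2$ are functionally independent, i.e.\ that $dH\wedge dF_1\wedge dF_2\neq0$ on a dense open subset of $T^*M$.

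For that I would insert the explicit normal forms $g_1,g_2,g_3$ of Appendix~\ref{app:normal.forms.dom.3} — in the hypergeometric form furnished by Proposition~\ref{prop:uniqueness} — compute $g[\sigma[\theta,\varphi]]$, the Benenti tensors $L_1,L_2$, and then $H,F_1,F_2$ as quadratic forms in $(p_1,p_2)$ with coefficients depending on the variables $x,y$ and the parameters $\theta,\varphi$, and check that $\rk\big(\partial(H,F_1,F_2)/\partial(x,y,p_1,p_2)\big)=3$ at a generic point; concretely it suffices to exhibit one $3\times3$ minor whose determinant, viewed as a function of $(x,y,p_1,p_2)$, does not vanish identically for $(\theta,\varphi)$ outside the six exceptional points. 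Once this is established the theorem follows: $H$ is purely kinetic, so the system is \emph{free}; $F_1,F_2$ are, like $H$, polynomials of degree~$2$ in the momenta; and one has $2n-1=3$ functionally independent integrals — a free, second‑order, maximally superintegrable system in the usual sense, consistently with Definition~\ref{def:superintegrability}.

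The main obstacle is exactly this functional‑independence step. Three linearly independent quadratic integrals on a two‑dimensional cotangent bundle may very well be functionally dependent — for example $H=p_1^2+p_2^2$, $F_1=p_1^2$, $F_2=p_1p_2$ on flat $\R^2$, where the vertical parts of $dH,dF_1,dF_2$ already exhaust the two‑dimensional fibre — so linear independence of $\sigma,\bar\sigma,\hat\sigma$ in $\solSp$ is by itself not sufficient: the argument must genuinely use the $x,y$‑dependence of the coefficient matrices coming from the normal forms (which is non‑degenerate precisely because $X$ is essential), and must control the vanishing locus of the relevant Jacobian minor accurately enough to confine it to a nowhere‑dense set away from the six eigendirections of $\lie_X$. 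A further, routine point is to discard the nowhere‑dense loci where $\sigma[\theta,\varphi]$ degenerates or $L_1,L_2$ fail to be invertible, so that all assertions are to be understood on a dense open subset of $T^*M$.
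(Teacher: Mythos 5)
Your overall route coincides with the paper's: parametrize the projective class by rays in $\solSp\cong\R^3$ (hence by $\mathbb{S}^2$), take the spherical frame $(\sigma,\bar\sigma,\hat\sigma)$ (the paper obtains $\hat\sigma$ as the cross product $\sigma\times\bar\sigma$), produce three quadratic integrals via Theorem~\ref{th:topalov} and~\eqref{eqn:integral.g}, exclude the six eigendirections of $\lie_X$ where $X$ becomes homothetic, and reduce everything to showing that the Jacobian of $(H,I,J)$ with respect to $(x,y,p_x,p_y)$ has rank~3 generically. The problem is that you stop exactly there: you declare functional independence to be ``the main obstacle'' and describe the computation that would settle it, but you neither carry it out nor give any argument controlling the vanishing locus of a $3\times3$ minor uniformly in $(\theta,\varphi)$ away from the six exceptional points. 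Since, as your own flat-space example shows, linear independence in $\solSp$ does not imply functional independence, this verification is the substantive content of the theorem, and as written your argument is incomplete at precisely that point.

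The paper closes this gap with two ingredients you are missing. First, the degree-of-mobility-3 generators~\eqref{eqn:supint.generators} are simple rational metrics such as $\g1=(y^2+x)\,dxdy$; Proposition~\ref{prop:uniqueness} and its hypergeometric representations concern the normal forms (C.8)--(C.9) of the mobility-2 classification and play no role here, so your plan to insert ``hypergeometric'' normal forms misattributes that result (fortunately in your favour: the actual computation is far more elementary than you anticipate). Second, instead of checking the rank condition for every $g[\theta,\varphi]$, the paper reduces to the single basis triple $(\g1,\g2,\g3)$: since the map relating the superintegrable systems does not affect the momenta derivatives, a rank drop for $g\ne\g1$ would force $(\gamma_x-\gamma_y)\,\det\bigl(A[\g1,\g2,\g3]\bigr)=0$ with $\gamma=\det(g)/\det(\g1)$; one then checks $\gamma_x\ne\gamma_y$ and that the $3\times3$ minor $\det S$ built from the $y$- and momenta-derivatives is, after clearing denominators, an explicit polynomial in $x,y,p_x,p_y$ that does not vanish identically. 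Supplying either this reduction or a direct verification of a nonvanishing minor for general $(\theta,\varphi)$ is what your proposal still needs before it constitutes a proof.
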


\noindent For the well-established notion of superintegrable systems, see reference~\cite{kalnins_2018}, for instance.
Note that we consider only the case of free Hamiltonians here (i.e., no potential term exists), see also~\cite{vollmer_2018} for a subsequent study of the full superintegrable systems.
In Section~\ref{sec:integration.of.connection}, we explicitly compute the trajectories of the geodesics of the (superintegrable) systems described in Theorem~\ref{thm:superintegrable.systems}.

\section{Uniqueness of the normal forms for metrics with one, essential projective symmetry}\label{sec:classification}

This current section has two principal parts. First, in Subsection~\ref{sec:group.actions.symmetries} we review the strategy of~\cite{manno_2018}, compiling a synopsis how symmetries and group actions are exploited to obtain mutually non-diffeomorphic normal forms for metrics with one, essential projective symmetry --- the classification itself is reviewed in Appendices~\ref{app:normal.forms.dom.2} and~\ref{app:normal.forms.dom.3}.
Afterwards, in Subsection~\ref{sec:uniqueness}, we critically review the uniqueness of these normal forms. Indeed, two of the normal forms are given in terms of unevaluated integrals, allowing for an integration constant to be chosen. This weaker form of uniqueness is remedied by an alternative representation in terms of special functions.
Some of the material in this current section is going to be referred to in the proofs of Theorems~\ref{thm:distinguished.coordinates} and~\ref{thm:superintegrable.systems}, too.

\subsection{Group actions and symmetries}\label{sec:group.actions.symmetries}

Let $g$ be a metric and let $\solSp$ be its associated metrization (Liouville) space. On~$\solSp$, or rather on~$\metrSp$, there are two major group actions:
First, the action of the projective group. Second, the action of the isometry group.
Most importantly, any projective vector field $X$ induces a linear mapping on $\solSp$, as~\eqref{eqn:LX.action} shows.
\begin{lemma}[\cite{matveev_2012,manno_2018}]\label{la:normal.forms.Lw}
Let $X$ be a projective vector field. Then there is a 2-dimensional subspace $\solSp'\subseteq\solSp$ that is invariant under the Lie derivative $\lie_X:\solSp'\to\solSp'\subseteq\solSp$. There exists a basis $(\sigma_1,\sigma_2)$ of~$\solSp'$ such that~$\lie_X$ assumes one of the following normal forms up to rescaling~$X$.
\begin{equation}\label{eqn:normal.forms.Lw}
 \text{(I)}\quad \begin{pmatrix} \lambda & 0\\ 0 & 1 \end{pmatrix}\quad\text{with $|\lambda|\geq1$}\,;\qquad\qquad
 \text{(II)}\quad \begin{pmatrix} 1 & 1\\ 0 & 1 \end{pmatrix}\,;\qquad\qquad
 \text{(III)}\quad \begin{pmatrix} \lambda & -1\\ 1 & \lambda \end{pmatrix} \quad\text{with $\lambda\geq0$}.
\end{equation}
\end{lemma}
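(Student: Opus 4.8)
The plan is to work entirely on the level of the linear operator $\lie_X:\solSp\to\solSp$ (whose linearity is already recorded above), reducing the statement to the classification of real $2\times 2$ matrices up to conjugation and rescaling. First I would establish the existence of the invariant plane $\solSp'$. Since $X$ is essential, Lemma~\ref{la:L.X.nonproportional} applied to any $\sigma\in\metrSp$ shows that $\sigma$ and $\lie_X\sigma$ are linearly independent, so $\dim\solSp\geq 2$. For any $\R$-linear endomorphism of a real vector space of dimension at least $2$, the real (rational) canonical form yields an invariant subspace of dimension exactly $2$: either one spanned by (generalised) eigenvectors associated with real eigenvalues, or the $2$-dimensional real subspace attached to a pair of complex-conjugate eigenvalues. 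This gives $\solSp'$; if $\dim\solSp=2$ then necessarily $\solSp'=\solSp$.

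Next I would normalise $A:=\lie_X|_{\solSp'}$, a real $2\times 2$ matrix. Changing the basis $(\sigma_1,\sigma_2)$ of $\solSp'$ conjugates $A$, while rescaling $X\mapsto cX$ with $c\in\R\setminus\{0\}$ replaces $A$ by $cA$; the claim is that these two operations bring $A$ into exactly one of the three listed forms. Over $\R$ there are three conjugacy types: (i) diagonalisable with real eigenvalues $\mu_1,\mu_2$, where, assuming $\mu_2\neq 0$, one scales to $\mathrm{diag}(\mu_1/\mu_2,1)$ and, after possibly swapping $\sigma_1,\sigma_2$, arranges $|\mu_1/\mu_2|\geq 1$ --- this is form (I) with $\lambda=\mu_1/\mu_2$; (ii) a single real eigenvalue with a non-trivial Jordan block, where one scales the (nonzero) eigenvalue to $1$ and rescales the nilpotent part, giving form (II); (iii) a complex-conjugate pair $a\pm i b$ with $b\neq 0$, whose real form is $\begin{pmatrix}a&-b\\ b&a\end{pmatrix}$, which after dividing $X$ by $|b|$ and combining a conjugation by $\mathrm{diag}(1,-1)$ with a possible sign flip $X\mapsto -X$ becomes form (III) with $\lambda=|a/b|\geq 0$.

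The remaining (and, I expect, the only delicate) point is to show that the normalisation above is \emph{tight}, i.e.\ that for a genuinely essential $X$ no operator falls outside the three listed ranges --- concretely, that $A=0$ and that $A$ has no zero eigenvalue. The case $A=0$ is immediate: $\lie_X\sigma=0$ for $\sigma\in\solSp'$ would contradict Lemma~\ref{la:L.X.nonproportional} as soon as $\solSp'$ meets $\metrSp$ (and in dimension $2$, $\metrSp=\solSp\setminus\{0\}$). A singular but nonzero $A$ is the genuine obstacle: here I would either invoke the detailed structure of the $2$-dimensional metrizability equations to exclude a zero eigenvalue of $\lie_X$ outright, or, when $\dim\solSp=3$, replace $\solSp'$ by another invariant plane that pairs the zero eigenvalue with a nonzero one --- thereby folding this possibility into case~(I) --- and in the remaining borderline situation appeal to the explicit classification recalled in Appendices~\ref{app:normal.forms.dom.2} and~\ref{app:normal.forms.dom.3} (see also \cite{matveev_2012,manno_2018}). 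Everything else is routine linear algebra.
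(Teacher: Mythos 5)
The paper does not actually prove Lemma~\ref{la:normal.forms.Lw}: it imports it from \cite{matveev_2012,manno_2018}, so there is no in-paper argument to measure you against, and your proposal has to stand as a self-contained proof. Its linear-algebra core is correct and is indeed the routine part: the existence of a $2$-dimensional $\lie_X$-invariant plane from the real canonical form, the observation that a change of basis conjugates $A=\lie_X|_{\solSp'}$ while $X\mapsto cX$ rescales it, and the normalisations in the three regular conjugacy types (including the $\mathrm{diag}(1,-1)$-conjugation combined with $X\mapsto -X$ to reach $\lambda\geq 0$ in case (III)) are all fine.

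The genuine gap is exactly the point you set aside: showing that no zero eigenvalue (in particular no zero matrix and no nilpotent Jordan block) can occur, so that the rescaling really lands in one of the forms \eqref{eqn:normal.forms.Lw}. A zero eigenvalue means that some metric of the projective class admits $X$ as a Killing field, and excluding this under the standing hypotheses of the classification (projective algebra one-dimensional, generated by an essential $X$) is a geometric statement about the metrizability equations, established in \cite{matveev_2012,manno_2018} by their case analysis --- it cannot follow from linear algebra alone, and some such hypothesis is indispensable: for a translational Killing field of the flat metric, $\lie_X$ acts nilpotently on the $6$-dimensional space $\solSp$, so \emph{no} invariant plane carries any of the forms \eqref{eqn:normal.forms.Lw}. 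Your two patches do not close the gap. Pairing a zero eigenvalue with a nonzero one gives $\mathrm{diag}(\mu,0)$ on that plane; conjugation cannot alter the spectrum and rescaling $X$ cannot make $0$ nonzero, so after swapping and scaling one reaches at best $\mathrm{diag}(0,1)$, i.e.\ $\lambda=0$, which the constraint $|\lambda|\geq 1$ in case (I) is precisely designed to exclude --- so this case cannot be ``folded into (I)''. And appealing to the explicit normal forms of Appendices~\ref{app:normal.forms.dom.2} and~\ref{app:normal.forms.dom.3} is circular, since that classification is built on this very lemma. A smaller inaccuracy: for $A=0$ you invoke Lemma~\ref{la:L.X.nonproportional}, but its hypothesis is that $X$ is non-homothetic for the given $\sigma$, so there is no direct contradiction; the correct argument is that $\lie_X\equiv 0$ on $\solSp$ would make $X$ Killing for every metric of the class, contradicting essentiality --- and even this only settles the case $\solSp'=\solSp$, i.e.\ degree of mobility $2$.
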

\medskip

\noindent The action in~$\solSp$ of (the projective algebra generated by) $X$ can also be investigated in terms of a pullback operation.
Let $\phi_t$ be the local flow of $X$, $(\sigma_1,\sigma_2)$ a basis of the Liouville metrization space $\solSp$ according to~\eqref{eqn:normal.forms.Lw}, and $\sum u_{i}\sigma_i\in\solSp$ a linear combination, with $u_{i}\in\mathbb{R}$. We have the following transformation rules.
\begin{subequations}\label{eqn:pullbacks.dom2}
\begin{align}
 (I)\quad\phi^*_t
 \left(\sum u_{i}\sigma_i\right) &= e^{\lambda t}u_1\sigma_1+e^tu_2\sigma_2 \\
 (II)\quad\phi^*_t
 \left(\sum u_{i}\sigma_i\right) &= e^{\lambda t}u_1\sigma_1+(te^tu_1+e^tu_2)\sigma_2 \\
 (III)\quad\phi^*_t
 \left(\sum u_{i}\sigma_i\right) &= e^{\lambda t}\left( (\cos(t)u_1-\sin(t)u_2)\sigma_1+(\sin(t)u_1+\cos(t)u_2)\sigma_2\right)
\end{align}
\end{subequations}

\noindent What about the action of the isometry group? In fact, if two projective classes are isometric, they are identical. This fact has been exploited in~\cite{matveev_2012,manno_2018} to achieve the parameter restrictions in~\eqref{eqn:normal.forms.Lw}. Similarly, on the level of the underlying manifold, a swapping of the variables $x\leftrightarrow y$ permits one to reduce the normal forms~\cite{manno_2018}.

Before we continue further, let us first investigate the parameter $\lambda$ more closely, justifying the assumption $\lambda\geq0$ in~\eqref{eqn:normal.forms.Lw}.
\begin{lemma}[\cite{manno_2018}]
	Consider two projectively equivalent metrics of label (III) and identical type A, B or C in Table~\ref{tab:normal.forms.dom.2}, with parameters $\lambda$, $\lambda'$.
	The parameters satisfy $\lambda'=\pm\lambda$.
\end{lemma}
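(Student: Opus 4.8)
The statement to be proven is: for two projectively equivalent metrics of type (III) in the normal-form classification, sharing the same Dini type (A, B, or C), the parameters $\lambda,\lambda'$ characterizing the eigenvalue structure of $\lie_X$ satisfy $\lambda'=\pm\lambda$. The plan is to reduce everything to a linear-algebra statement about the operator $\lie_X$ acting on the Liouville space $\solSp$, using the fact (emphasized above) that isometric projective classes are identical. First I would observe that the parameter $\lambda$ in case (III) is, by definition (see the discussion following Lemma~\ref{la:normal.forms.Lw} and Table~\ref{tab:mobility2}), the ratio of the real to the imaginary part of the complex-conjugate pair of eigenvalues of $\lie_X:\solSp'\to\solSp'$; equivalently, the eigenvalues are $\mu(\lambda\pm i)$ for some real $\mu\ne0$ absorbed by rescaling $X$. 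Since the eigenvalues of a linear endomorphism are invariant under conjugation (i.e.\ under a change of basis of $\solSp'$), the only freedom that can alter $\lambda$ is (i) the normalization of $X$ itself, and (ii) passing from one realizing metric to a projectively equivalent one, which is precisely what the lemma addresses.

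The key steps, in order, are as follows. \emph{Step 1:} Fix the projective class and the projective vector field $X$ (unique up to a constant factor, in the situation of interest). Rescaling $X\mapsto cX$ multiplies all eigenvalues of $\lie_X$ by $c$, hence replaces the eigenvalue pair $\mu(\lambda\pm i)$ by $c\mu(\lambda\pm i)$; this changes neither $\lambda$ nor its sign, so the parameter $\lambda$ is already an invariant of the class once we agree to normalize so that the imaginary part has modulus $1$. \emph{Step 2:} Now take two projectively equivalent metrics $g,g'$ of type (III) and the same Dini type. Because their projective classes coincide, they carry the \emph{same} Liouville space $\solSp$ and the \emph{same} operator $\lie_X$ (up to the scalar from Step 1). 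Therefore the unordered eigenvalue pair is the same up to an overall real scalar, forcing $\lambda'=\lambda$ in this case. \emph{Step 3:} Account for the reductions used in building the normal forms of Table~\ref{tab:normal.forms.dom.2}: the normal-forming procedure of~\cite{manno_2018} permits the swap $x\leftrightarrow y$ of the underlying variables (and, in the complex-Liouville case, complex conjugation $z\leftrightarrow\bar z$). Under such a swap the roles of the two generators $\sigma_1,\sigma_2$ of the invariant plane $\solSp'$ get interchanged, which conjugates the matrix $\begin{pmatrix}\lambda&-1\\1&\lambda\end{pmatrix}$ by $\begin{pmatrix}0&1\\1&0\end{pmatrix}$, yielding $\begin{pmatrix}\lambda&1\\-1&\lambda\end{pmatrix}$; in terms of the canonical form (III) this corresponds to replacing $\lambda$ by $-\lambda$ after the standard reshuffling that restores the sign pattern of the off-diagonal entries. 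Hence across the full set of normal forms one can only guarantee $\lambda'=\pm\lambda$, with the genuine sign ambiguity coming exactly from this discrete variable swap. Combining Steps 2 and 3 gives the claim.

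The main obstacle I anticipate is Step 3: one must check carefully that the discrete symmetries used to pin down the normal forms in Appendix~\ref{app:normal.forms.dom.2} act on the invariant plane $\solSp'$ precisely by the transposition of $(\sigma_1,\sigma_2)$ (together with harmless rescalings), and that no further discrete operation can produce a sign change of the \emph{imaginary} part relative to the real part in a way not already captured by $\lambda\mapsto-\lambda$. This requires tracking how the Dini basis $(\sigma_1,\sigma_2)$ transforms under $x\leftrightarrow y$ in each of the three Dini types A, B, C, and confirming that in type B the additional conjugation $z\leftrightarrow\bar z$ produces the same sign effect rather than a new one. Once that bookkeeping is done, the result follows from the elementary invariance of eigenvalues under conjugation, so the substance of the proof is entirely in the careful identification of the admissible basis changes of $\solSp'$.
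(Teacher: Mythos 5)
Your overall strategy is the paper's own: the projective class determines the operator $\lie_X$ on the $2$-dimensional space $\solSp$ up to the rescaling freedom of $X$, so its eigenvalue configuration is the invariant from which $\lambda'=\pm\lambda$ must be read off. However, the way you locate the sign ambiguity is wrong at two points, and the two errors happen to cancel only in the final claim, not in the reasoning. First, in Step~1 you assert that rescaling $X\mapsto cX$ ``changes neither $\lambda$ nor its sign''. This is false for $c<0$: the normalized eigenvalue pair $\{\lambda+i,\lambda-i\}$ is sent to $\{-\lambda-i,-\lambda+i\}$, which is exactly the eigenvalue set of the canonical form (III) with parameter $-\lambda$. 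This negative rescaling (equivalently, the choice of which member of the conjugate pair is declared to be $\lambda+i$) is precisely the source of the $\pm$ in the lemma, and it is the freedom the paper's proof invokes; your Step~2 conclusion ``$\lambda'=\lambda$'' therefore overclaims.

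Second, Step~3 cannot supply the missing sign. Conjugating $\begin{psmallmatrix}\lambda&-1\\1&\lambda\end{psmallmatrix}$ by the swap $\begin{psmallmatrix}0&1\\1&0\end{psmallmatrix}$ gives $\begin{psmallmatrix}\lambda&1\\-1&\lambda\end{psmallmatrix}$, but similar matrices have identical eigenvalues ($\lambda\pm i$), and indeed a further conjugation by $\operatorname{diag}(1,-1)$ returns the canonical form (III) with the \emph{same} $\lambda$, not $-\lambda$. So no change of basis of $\solSp'$ (whether induced by $x\leftrightarrow y$, $z\leftrightarrow\bar z$, or anything else) can alter $\lambda\ne0$; the ``standard reshuffling that restores the sign pattern'' does not produce $\lambda\to-\lambda$. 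Followed consistently, your Steps~1--3 would prove $\lambda'=\lambda$, which is stronger than the lemma and not what the classification supports, and the careful Dini-basis bookkeeping you flag as the main obstacle is in fact unnecessary. The correct (and the paper's) argument is shorter: the class fixes the unordered eigenvalue pair of $\lie_X$ only up to a nonzero real factor $c$ coming from rescaling $X$; compatibility with the normal form (imaginary parts $\pm1$) forces $c=\pm1$, and $c=-1$ gives $\lambda'=-\lambda$, whence $\lambda'=\pm\lambda$ and $\lambda\ge0$ is a sharp parameter range.
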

\begin{proof}
	The projective class is fixed by the eigenvalues on the 2-dimensional space $\solSp$ (the space is 2-dimensional due to Theorem~2 of \cite{manno_2018}).
	Up to a constant factor, we have
	$
	\lie_X \begin{psmallmatrix} \sigma_1 \\ \sigma_2 \end{psmallmatrix}
	=\begin{psmallmatrix} \lambda & -1 \\ 1 & \lambda \end{psmallmatrix}
	\begin{psmallmatrix} \sigma_1 \\ \sigma_2 \end{psmallmatrix}\,,
	$
	and this means the Lie derivative, as an endomorphism on $\solSp$, has the two complex eigenvalues
	$\left\{ \lambda+i\,, \lambda-i\right\}$.
	The only further freedom this allows for, is to replace $\lambda\to-\lambda$, which does not change the set of eigenvalues (any other multiplication by a constant would; however the replacement reorders the pair of eigenvalues).
	Therefore, $\lambda\geq0$ is the optimal parameter range, in the  sense that for different choices of such $\lambda$, the projective classes are different.
\end{proof}

\noindent Let us now discuss the general strategy of~\cite{manno_2018}, which might be described as follows: Consider the space~$\solSp$ of solutions to~\eqref{eqn:linear.system.2D}. For metrics with one, essential projective symmetry a general description (however not sharp) of such spaces is given in~\cite{matveev_2012}, in terms of a number of parameters. Use the action of the isometry group to reduce the number and range of the parameters.
Thus, we find a minimal description of the spaces~$\solSp$. Take one of these spaces, denoted $\solSp$ from now on. The action of the projective group, via its pullback $\phi^*_t$, provides us with a simple isometry that we can use to restrict to some representative for each orbit of the action of the projective symmetry~$X$. In a final step, it needs to be checked if two given representatives are, indeed, isometric.
This outlined strategy is facilitated by a number of simple invariant properties and objects under isometries and the projective group action. Specifically, the available invariants include:
\begin{enumerate}[font=\emph,leftmargin=0pt,align=left,itemindent=*,labelindent=0\parindent,label=\alph*)]
  \item \emph{Degree of Mobility.}
  The degree of mobility is invariant under projective transformation, as is the eigenvalue configuration of~$\lie_X$ on the space~$\solSp$.
  \item \emph{Action of projective symmetries.}
  Metrics on the orbit of the projective symmetry are isometric (diffeomorphic) and thus we can choose a representative on each orbit and ignore the other metrics on this orbit.
  Moreover, the set of poles and zeros of the (square of the) length of the projective vector field is preserved under diffeomorphisms, and this can be exploited to identify isometries (similarly to Benenti tensors mentioned below).
  Finally, the action of the projective symmetry on the space $\solSp$ is an important tool, as eigenspaces of~$\lie_X$ are geometric objects and thus preserved under diffeomorphisms.
  \item \emph{Benenti tensors.}
  Benenti tensors~\eqref{eqn:benenti.tensor}, with $g$ admitting the homothetic vector field~$X$, transform linearly under diffeomorphisms, as eigenspaces of $\lie_X$ are preserved. This permits to derive candidates for local isometries, and thus Benenti tensors can be used as a means to find all diffeomorphisms mediating between candidates of normal forms. Benenti tensors are also important in the study of degenerate solutions to~\eqref{eqn:linear.system}, particularly in higher dimensions~\cite{manno_2018ben}.
  \item \emph{Swapping of variables.} There is an additional freedom inherently present in some of the normal forms. It typically manifests itself, in~\cite{matveev_2012} and~\cite{manno_2018}, as a freedom to exchange the roles of the two variables. This additional freedom has at least two effects on the normal forms: On the one hand, it permits to identify identical projective classes among the description in~\cite{matveev_2012}. On the other hand, it reveals \emph{special cases}, see Table~\ref{tab:mobility2}, i.e.\ the existence of additional isometric freedom that allows one to restrict the ranges of some of the parameters in the normal forms (refer to Appendix~\ref{app:normal.forms.dom.2} for details).
\end{enumerate}
This permits one to identify the possible diffeomorphisms that connect the explicit solutions of~\eqref{eqn:linear.system.2D}. In fact, asking for the above invariant properties to be observed, we can restrict ourself to a small set of diffeomorphisms. As a second step, one can check whether these diffeomorphisms indeed map one solution onto another (this might produce extra restrictions on the parameters).
The result is the list of mutually non-diffeomorphic (locally non-isometric) normal forms, see Appendices~\ref{app:normal.forms.dom.2} and~\ref{app:normal.forms.dom.3}.
Let us now investigate the uniqueness of these normal forms more carefully.

\subsection{Uniqueness of the Normal Forms: Proof of Proposition~\ref{prop:uniqueness}}\label{sec:uniqueness}
The normal forms in \cite{manno_2018} are unique in the following sense: After specifying the antiderivative of unevaluated integrals, if present, there does not exist a change of variables such that the resulting expression for the metric is identical in form with the initial one, but with different parameter values. In other words, we may say that the parameters are sharp, not allowing for variable transformations between the normal forms.
However, antiderivatives have to be specified first. Consider the normal forms (C.8) and (C.9) in Appendix~\ref{app:normal.forms.dom.2}. They indeed allow for a slight arbitrariness, as they involve unevaluated integrals in one variable, denoted by $Y(y)$ and $Y_\lambda(y)$ respectively. We are therefore free to choose the integration constant, but this supposed arbitrariness is naturally absorbed by a translational freedom in the other variable, called~$x$ in Table~\ref{tab:mobility2}. Thus, while (C.8) and (C.9) are unique in the stated sense, there is also some arbitrariness present.

The current subsection is devoted to a discussion of the uniqueness of these normal forms\footnote{The discussion is based on the previous versions of the preprint arXiv:1705.06630 of~\cite{manno_2018}, particularly Lemmas~\ref{la:erfi.representation} and~\ref{la:hypergeom.C9} correspond to Remarks~10 and~13 of its versions~1 and~2. These were removed from later versions due to length considerations.}.
Can we remove the mentioned remaining arbitrariness from the normal forms?
The answer, in fact, is yes (almost everywhere) as the following two lemmas show.
\begin{lemma}\label{la:erfi.representation}
Metric (C.8) is isometric to
\begin{equation*}
	g = \kappa\,\bigg( x+\erfi(\nicefrac{1}{\sqrt{y}}) \bigg)\,dxdy\,,
	\quad\text{if $y<0$}\,,
	\qquad\text{and}\qquad
	g = \kappa\,\bigg( x+\erf(\nicefrac{1}{\sqrt{y}}) \bigg)\,dxdy\,,
	\quad\text{if $y>0$}\,.
\end{equation*}
\end{lemma}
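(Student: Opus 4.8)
The plan is to start from the explicit normal form (C.8) for metrics with one essential projective symmetry of Jordan-block type (C) and degree of mobility 2, which according to Table~\ref{tab:mobility2} and the classification of Appendix~\ref{app:normal.forms.dom.2} has the shape
\[
 g = \kappa\,\big( x + Y(y) \big)\,dx\,dy\,,
 \qquad
 Y(y) = \int^{y} \phi(t)\,dt
\]
for a specific integrand $\phi$ prescribed by the classification, with the antiderivative determined only up to a constant. The claim is then a two-part computation: I would (i) identify the precise $\phi$ occurring in (C.8), and (ii) evaluate the antiderivative explicitly, obtaining $\erfi(1/\sqrt{y})$ in the region $y<0$ and $\erf(1/\sqrt{y})$ in the region $y>0$. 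Since $\erfi$ and $\erf$ differ only by the factor $i$ in the argument of the Gaussian, $\erfi(z) = -i\,\erf(iz)$, the two cases are really the same antiderivative formula interpreted in the two sign regimes, which explains why the statement splits along $y=0$.

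The first concrete step is to recall from Appendix~\ref{app:normal.forms.dom.2} the exact form of (C.8); I expect the unevaluated integral to be (up to rescaling of $y$ and the constant $\kappa$) of the form $Y(y)=\int y^{-3/2} e^{1/y}\,dt$ type expression, i.e.\ the integrand is the derivative of an error-type function after the substitution $w = 1/\sqrt{y}$. Concretely, with $w = y^{-1/2}$ one has $dw = -\tfrac12 y^{-3/2}\,dy$, and $\erfi(w)$ (resp.\ $\erf(w)$) has derivative $\tfrac{2}{\sqrt\pi} e^{w^2}$ (resp.\ $\tfrac{2}{\sqrt\pi} e^{-w^2}$); combining these shows that $\tfrac{d}{dy}\erfi(1/\sqrt y) = -\tfrac{1}{\sqrt\pi}\, y^{-3/2} e^{1/y}$ for $y<0$ and $\tfrac{d}{dy}\erf(1/\sqrt y) = -\tfrac{1}{\sqrt\pi}\, y^{-3/2} e^{-1/y}$ for $y>0$ — here note $e^{w^2}=e^{1/y}$ and in the $y>0$ case the exponent becomes $e^{-1/y}$ only if the integrand of (C.8) carries the opposite sign in the exponential, which is exactly what the sign of $y$ forces. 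So the check reduces to matching the integrand of (C.8) with $y^{-3/2}e^{\pm 1/y}$ up to the constant $\kappa$, and then invoking the translational freedom in $x$ (already discussed in Subsection~\ref{sec:uniqueness}) to absorb the integration constant, so that $Y(y)$ may be taken to be precisely $\erfi(1/\sqrt y)$ or $\erf(1/\sqrt y)$.

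The remaining point is to confirm that replacing the unevaluated antiderivative $Y(y)$ by the specific special-function antiderivative, together with the compensating $x$-translation, is an \emph{isometry} and not merely an equality of the metric tensor up to a change of the parameter $\kappa$. This follows because the transformation used is $(x,y)\mapsto(x - c, y)$ for the constant $c$ equal to the difference of the two choices of antiderivative, which is a genuine diffeomorphism of the domain fixing the coordinate $y$, and under it $(x+Y(y))\,dx\,dy$ is literally invariant in form. The main obstacle I anticipate is purely bookkeeping: getting the constants and the sign of the exponent in (C.8) exactly right so that the substitution $w=1/\sqrt y$ lands on the standard normalizations of $\erf$ and $\erfi$, and making sure the two branches $y<0$ and $y>0$ are handled with the correct real-analytic representative (for $y<0$ the quantity $1/\sqrt y$ must be read via $1/\sqrt{y} = -i/\sqrt{-y}$, which is precisely why $\erf$ turns into $\erfi$). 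There is no deep difficulty — once the integrand of (C.8) is written down, the lemma is an elementary antiderivative evaluation plus the already-established $x$-translation freedom.
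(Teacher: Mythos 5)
Your route is genuinely different from the paper's: you propose to evaluate the unevaluated integral of (C.8) from Table~\ref{tab:normal.forms.dom.2} directly, by the elementary substitution $w=\nicefrac{1}{\sqrt{|y|}}$, and then absorb the integration constant by a translation in $x$. The paper instead goes back to the ODE $y^2Y_1''-\frac12(y-3)Y_1'+\frac12 Y_1=0$ of~\cite{matveev_2012} (with $Y=Y_1'$), solves it through Kummer's equation in terms of the confluent hypergeometric function ${}_1F_1\left(\frac12,\frac32,\frac{3}{2y}\right)$, and only afterwards rewrites that special function via $\erf$ and $\erfi$; this hypergeometric route is also the one that extends to the companion statement for (C.9), where no elementary antiderivative is available, and it makes the ``w.l.o.g.''\ explicit by exhibiting the full two-parameter solution family whose inessential constants are exactly what the $x$-translation absorbs. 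Your elementary approach is viable for (C.8), and your final point (that replacing an antiderivative by another choice is compensated by the isometry $(x,y)\mapsto(x-c,y)$, under which $(x+Y(y))\,dx\,dy$ keeps its form) is correct and matches the discussion in Section~\ref{sec:uniqueness}.

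However, the sketch has a concrete flaw at precisely the step you flag as the crux. The integrand in (C.8) is $e^{\nicefrac{3}{2s}}\,|s|^{-\nicefrac32}$, whose exponent is \emph{positive} for $s>0$ and \emph{negative} for $s<0$; hence the direct evaluation gives, up to additive and multiplicative constants and a rescaling of the argument, $\erfi\bigl(\sqrt{\nicefrac32}/\sqrt{y}\bigr)$ on $y>0$ and $\erf\bigl(\sqrt{\nicefrac32}/\sqrt{|y|}\bigr)$ on $y<0$ --- the \emph{opposite} pairing to the one you assert. Accordingly, your derivative identities are not correct under the paper's absolute-value convention: for $y<0$ one has $\frac{d}{dy}\erfi\bigl(1/\sqrt{|y|}\bigr)=+\frac{1}{\sqrt{\pi}}\,|y|^{-\nicefrac32}\,e^{+1/|y|}$ (the chain rule flips sign because $|y|=-y$, and the exponential is growing), which no positive rescaling can match to the decaying integrand $e^{-3/(2|y|)}|y|^{-\nicefrac32}$; and the identity $\erfi(z)=-i\,\erf(iz)$, which you use to treat both half-planes as one formula, introduces a factor $\pm i$ that cannot be absorbed into the real constant $\kappa$. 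So the phrase ``which is exactly what the sign of $y$ forces'' does not close the argument: to arrive at the statement as written you must add an explicit shape-preserving change of coordinates $(x,y)\mapsto(ax,by)$ --- with $b$ chosen to normalize the factor $\sqrt{\nicefrac32}$ in the argument, $a$ to normalize the amplitude, and, if the half-planes are to be exchanged, $b<0$ together with the induced change $\kappa\mapsto a^2b\,\kappa$ --- on top of the $x$-translation, or else state the result with the regions paired as the computation actually yields them. None of this bookkeeping is in your sketch, and it is exactly where the proof lives.
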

\begin{proof}
	In Section~3.1.3.\ of \cite{matveev_2012}, the function $Y$ is obtained as the derivative of a function $Y_1$, where $Y=Y_1'$ satisfies the ODE
	$y^2\,Y_1''-\frac12 (y-3) Y_1'+\frac12 Y_1 =0$.
	This equation is similar to Kummer's equation. Indeed we can write the solution in terms of the confluent hypergeometric function ${}_1F_1$,
	\begin{equation*}
	Y_1(y)
	= C_1\left(
	2\sqrt{6}y\,(y-3)\,\,{}_1F_1\left(\tfrac12,\tfrac32,\tfrac{3}{2y}\right)
	+6\sqrt{y}e^{\nicefrac{3}{2y}}
	\right)+C_2(y-3)
	\quad\text{and w.l.o.g.}\quad
	Y(y) = y\,\,{}_1F_1\left(\tfrac12,\tfrac32,\tfrac{3}{2y}\right)\,.
	\end{equation*}
	For $y\ne0$, the special function ${}_1F_1\left(\frac12,\frac32,\bullet\right)$ can be represented by the (imaginary) error function $\erf$ ($\erfi$),
	\begin{align*}
	Y_1(y)
	&= C_1\left(
	\sqrt{6\pi}\,(y-3)\erfi\left(\frac{\sqrt{6}}{2\sqrt{y}}\right)
	+6\sqrt{y}e^{\nicefrac{3}{2y}}
	\right)+C_2(y-3),
	&\text{for $y<0$} \\
	Y_1(y)
	&= C_1\left(
	\sqrt{6\pi}\,(y-3)\erf\left(\frac{\sqrt{6}}{2\sqrt{y}}\right)
	+6\sqrt{y}e^{\nicefrac{3}{2y}}
	\right)+C_2(y-3),
	&\text{for $y>0$}\,.	
	\end{align*}
	Therefore, w.l.o.g.\ one may assume
	$Y=\erfi(\nicefrac{1}{\sqrt{y}})$ for $y<0$
	and
	$Y=\erf(\nicefrac{1}{\sqrt{y}})$ for $y>0$.
\end{proof}
\noindent For the normal forms (C.9), the description via hypergeometric functions is possible analogously, although it is much less concise.

\begin{lemma}\label{la:hypergeom.C9}
Metrics (C.9) are isometric to $\kappa\,(\Upsilon_\lambda(y)+x)\,dxdy$ where $\Upsilon_\lambda$ is a hypergeometric polynomial.
\end{lemma}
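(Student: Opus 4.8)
The plan is to mirror the argument of Lemma \ref{la:erfi.representation}, starting from the characterization of the function $Y_\lambda$ appearing in (C.9) as the solution of a second-order linear ODE. Concretely, I would first extract from the derivation in~\cite{matveev_2012} (Section~3.1.3, in the $\lambda$-dependent branch of the classification) the linear ODE satisfied by the antiderivative $\Upsilon_{\lambda}$ of $Y_\lambda$; as in the $\erfi$ case this ODE will have the shape $y^2\,\Upsilon_\lambda'' + (a_1 y + a_0)\,\Upsilon_\lambda' + b\,\Upsilon_\lambda = 0$ with coefficients $a_1,a_0,b$ depending affinely on $\lambda$. The strategy is then to recognize this ODE, after the substitution $t = \nicefrac{c}{y}$ for the appropriate constant $c$, as (a shifted/scaled version of) the confluent hypergeometric (Kummer) equation $t\,w'' + (\beta - t)\,w' - \alpha\,w = 0$, whose parameters $\alpha = \alpha(\lambda)$, $\beta = \beta(\lambda)$ I would read off by matching coefficients. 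One solution branch is then ${}_1F_1(\alpha(\lambda),\beta(\lambda),t)$, and pulling back gives $Y_\lambda(y)$ — up to the additive/multiplicative ambiguity that, as noted in Subsection~\ref{sec:uniqueness}, is absorbed by the translational freedom in $x$ and the overall constant $\kappa$.

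Next I would address the word \emph{polynomial} in the statement: ${}_1F_1(\alpha,\beta,t)$ truncates to a polynomial precisely when $\alpha$ is a non-positive integer. So the second step is to verify that for the values of $\lambda$ relevant to the normal form (C.9) — recall that in case (III$_\lambda$) the admissible range of $\lambda$ is restricted (see Table~\ref{tab:mobility2} and Lemma~\ref{la:normal.forms.Lw}, where $\lambda\geq 0$ is the ratio of real to imaginary part of the eigenvalue of $\lie_X$) — the parameter $\alpha(\lambda)$ is forced into $\Z_{\le 0}$, or at least that one can always select the polynomial solution branch of the (two-dimensional) solution space of the Kummer equation. Here the key point is that $\Upsilon_\lambda$ is not an arbitrary solution: it is fixed (up to the harmless freedoms above) by the requirement that the resulting $\sigma = f_1\sigma_1 + f_2\sigma_2$ actually solve the linear system~\eqref{eqn:linear.system.2D} with the correct parameter $\lambda$, and this pins down the branch. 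I would then write $\Upsilon_\lambda(y)$ explicitly as the truncating ${}_1F_1$, i.e.\ a polynomial in $\nicefrac1y$ (or in $y$ after clearing, which is what "hypergeometric polynomial" should be taken to mean here), and record the metric as $g = \kappa(\Upsilon_\lambda(y) + x)\,dx\,dy$, exactly as claimed.

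The main obstacle I anticipate is not the hypergeometric bookkeeping — that is routine coefficient-matching with Kummer's equation, just as in Lemma~\ref{la:erfi.representation} — but rather the \emph{truncation/branch-selection} step: showing that the geometrically distinguished antiderivative $\Upsilon_\lambda$ is the polynomial one for the full admissible $\lambda$-range, rather than a transcendental ${}_1F_1$. If $\alpha(\lambda)$ is \emph{not} automatically a non-positive integer for all relevant $\lambda$, then the honest statement is the weaker one (as in the $\erfi$/$\erf$ dichotomy of the previous lemma): the normal form is expressible through ${}_1F_1$, which only degenerates to a genuine polynomial on a discrete subfamily of $\lambda$. In that case I would either (i) reinterpret "hypergeometric polynomial" as "${}_1F_1$ with specified parameters" — which is the spirit of the remark this lemma descends from — or (ii) identify the extra constraint on $\lambda$ coming from~\eqref{eqn:linear.system.2D} that does force truncation. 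The "almost everywhere on their domain" caveat in Proposition~\ref{prop:uniqueness} suggests the intended reading is (i), so I would phrase the proof to conclude with the ${}_1F_1$ representation and note the truncation as the cases where it is literally a polynomial, keeping the exposition deliberately terse ("much less concise", as the paper already warns).
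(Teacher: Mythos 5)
Your overall route coincides with the paper's: extract from \cite{matveev_2012} the second-order linear ODE satisfied by the antiderivative $\Xi_\lambda$ (with $\Upsilon_\lambda=\Xi_\lambda'$), solve it in hypergeometric terms, and absorb the remaining constants into the translational freedom in $x$ and the factor $\kappa$. However, the concrete ODE you anticipate is wrong, and this derails both technical steps you build on it. For (C.9) the equation is $(y^2+1)\,\Xi_\lambda''-\tfrac12\,(y-3\lambda)\,\Xi_\lambda'+\tfrac12\,\Xi_\lambda=0$: the leading coefficient is $y^2+1$, not $y^2$, as one can already guess from the integrand $e^{-\frac{3\lambda}{2}\arctan(s)}\,(s^2+1)^{-\nicefrac14}$ in Table~\ref{tab:normal.forms.dom.2}. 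Its finite singular points sit at $y=\pm i$, so the reduction to a confluent hypergeometric equation proceeds by an \emph{affine} substitution $t\propto(y+i)$, not by $t=\nicefrac{c}{y}$, and the resulting parameters are complex; the paper's solution reads
\[
\Xi_\lambda(y)=c_1\,(3\lambda-y)+c_2\,(y+i)^{-\frac{i}{4}(3\lambda+5i)}\,
\hypergeom\bigl(\bigl[-\tfrac{3i}{4}(\lambda+i)\bigr],\bigl[-\tfrac{3i}{4}(\lambda+3i)\bigr],-\tfrac{i}{2}(y+i)\bigr)\,.
\]

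Because these parameters are generically non-real, the step you single out as the main obstacle --- verifying that $\alpha(\lambda)\in\Z_{\le0}$ so that ${}_1F_1$ truncates, or restricting $\lambda$ to force truncation --- is moot: no truncation occurs, no constraint on $\lambda$ is imposed, and the word ``polynomial'' in the lemma is used in the loose sense of your reading (i), namely an explicit closed-form expression through a (generalized) hypergeometric function with specified, here complex, parameters, parallel to the $\erf/\erfi$ representation of (C.8). Your fallback interpretation is therefore the correct one, but the elaborate branch-selection and $\lambda$-quantization discussion does not enter the paper's argument at all; with the corrected ODE and the affine substitution your plan collapses to the paper's short proof.
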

\begin{proof}
Analogously to Lemma~\ref{la:erfi.representation}, $\Upsilon_\lambda(y)=\Xi_\lambda'(y)$ is obtained from an ODE, given in~\cite{matveev_2012},
\begin{equation*}
  (y^2+1)\,\Xi_\lambda''-\frac12 (y-3\lambda) \Xi_\lambda'+\frac12 \Xi_\lambda =0\,.
\end{equation*}
We find the solution in terms of a generalized hypergeometric function, denoted by $\hypergeom$,
\[
	\Xi_\lambda(y) = c_1\,(3\lambda-y)
	 +c_2\,(y+i)^{-\frac{i}{4}\,(3\lambda+5i)}\,
	 \hypergeom\left( \left[-\tfrac{3i}{4}\,(\lambda+i)\right],
			  \left[-\tfrac{3i}{4}\,(\lambda+3i)\right],
			  -\tfrac{i}{2}\,(y+i)
		    \right).
\]
Thus, $\Upsilon_\lambda$ is a hypergeometric polynomial also.
\end{proof}

\noindent Lemmas~\ref{la:erfi.representation} and~\ref{la:hypergeom.C9}, combined, prove Proposition~\ref{prop:uniqueness}.

\section{Projective vector fields and first integrals}\label{sec:projective.vector.fields.integrals}

In this section, we review projective connections employing both an ODE-jet perspective and classical-mechanic integrals of motion. Particularly, we shall see how homogeneous polynomial integrals of the geodesic equation (i.e.\ Killing tensors) are interrelated with rational integrals of its associated projective connection.

\subsection{ODE-Jets perspective: projective connections as quotient systems}\label{sec:ode.perspecitve}

A geometric viewpoint for obtaining system \eqref{eq:proj.conn.gen.multidim} is provided by the jet spaces. In fact, system \eqref{eq:param.geod.equ} can be interpreted as an $N$-codimensional submanifold of the second jet space $J^2\pi$ of the trivial bundle $\pi:\mathbb{R}\times M\to\mathbb{R}$.
Recall that a point of the jet space $J^2\pi$ is an equivalence class $[\gamma]^2_t$ of parametrized curves whose Taylor expansion in $t$ is the same as that of $\gamma$ in $t$ up to the second order.
To any curve $\gamma:I\subseteq\mathbb{R}\to M$ one can associate a (local) section of $\pi$, and vice-versa. From now on we consider only local sections $\gamma$ of~$\pi$ that come from regular curves.
We denote by $(t,y^i,\dot{y}^i,\ddot{y}^i)$ a local chart of $J^2\pi$.
On the other hand, system~\eqref{eq:proj.conn.gen.multidim} can be interpreted as an $(N-1)$-codimensional submanifold of the second jet space $J^2(M,1)$ of $1$-dimensional submanifolds of $M$, with $(x,y^i_x,y^i_{xx})$ being a local chart of $J^2(M,1)$.
In fact, similarly to the case of $J^2\pi$, a point of $J^2(M,1)$ is an equivalence class $[\alpha]^2_p$ of $1$-dimensional submanifolds having with the $1$-dimensional submanifold $\alpha$ a contact of order $2$ in $p\in\alpha$, i.e.\ if they are locally described by graphs of functions then the Taylor expansions of these functions in the given point $p$ coincide up to order~2. The link between systems~\eqref{eq:param.geod.equ} and~\eqref{eq:proj.conn.gen.multidim} is provided by the following natural map:
\begin{equation}\label{eq:natural.map}
[\gamma]^2_t\in J^2\pi\to [\gamma(J)]_{\gamma(t)}^2\in J^2(M,1)
\end{equation}
where $J\subseteq I$ is a suitable small subinterval of $I$ such that $\gamma(J)$ is a $1$-dimensional submanifold of $M$. The local expression of \eqref{eq:natural.map} is
\begin{equation}\label{eq:map:J2}
(t,x,y^i,\dot{x},\dot{y}^i,\ddot{x},\ddot{y}^i)\in J^2\pi \to \left(x,y^i,\frac{\dot{y}^i}{\dot{x}},\frac{\ddot{y}^i\dot{x}-\ddot{x}\dot{y}^i}{\dot{x}^3}\right) = (x,y^i,y^i_{x},y^i_{xx})\in J^2(M,1)
\end{equation}
\begin{proposition}\label{prop:proj}
	System \eqref{eq:param.geod.equ} projects to system \eqref{eq:proj.conn.gen.multidim} via the map \eqref{eq:natural.map}.
\end{proposition}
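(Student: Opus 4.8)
The plan is to work entirely in the coordinate chart $(x,y^i,y^i_x,y^i_{xx})$ of $J^2(M,1)$ and to exploit the explicit local formula~\eqref{eq:map:J2} for the map~\eqref{eq:natural.map}. Note first that this formula, because of its denominators $\dot x$ and $\dot x^3$, is valid only on the open locus $\{\dot x\neq0\}\subset J^2\pi$ --- which is exactly the condition under which the image curve is, near the point considered, the graph of functions $y^i=y^i(x)$ over the first coordinate $x=y^1$, i.e.\ lies in the chosen chart. Write $\mathcal{E}\subset J^2\pi$ for the submanifold defined by the parametrized geodesic equations~\eqref{eq:param.geod.equ} and $\mathcal{E}'\subset J^2(M,1)$ for the one defined by~\eqref{eq:proj.conn.gen.multidim}. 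I would prove the proposition in the strong form: the map~\eqref{eq:natural.map} carries $\mathcal{E}\cap\{\dot x\neq0\}$ \emph{onto} $\mathcal{E}'$ (in particular it maps $\mathcal{E}$ into $\mathcal{E}'$).

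For the forward inclusion, start from a point of $\mathcal{E}$, so that $\ddot y^k=-\Gamma^k_{ij}\dot y^i\dot y^j$ for every $k=1,\dots,N$ (with $y^1=x$ and $i,j$ summed from $1$ to $N$); in particular $\ddot x=-\Gamma^1_{ij}\dot y^i\dot y^j$. Substituting both into the last slot of~\eqref{eq:map:J2},
\[
  y^k_{xx}=\frac{\ddot y^k\dot x-\ddot x\,\dot y^k}{\dot x^3}
  =\frac{-\bigl(\Gamma^k_{ij}\dot y^i\dot y^j\bigr)\dot x+\bigl(\Gamma^1_{ij}\dot y^i\dot y^j\bigr)\dot y^k}{\dot x^3}\,.
\]
Now split each quadratic form $\Gamma^\bullet_{ij}\dot y^i\dot y^j$ into the piece with $i=j=1$, the mixed piece, and the piece with both indices $\geq 2$, using $\Gamma^\bullet_{ij}=\Gamma^\bullet_{ji}$ together with $\dot y^1=\dot x$ and $\dot y^m=y^m_x\,\dot x$ (the latter valid since $\dot x\neq0$). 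Dividing by the relevant powers of $\dot x$, the first summand produces the constant term $-\Gamma^k_{11}$, part of the linear term $-2\Gamma^k_{1m}y^m_x$, and the quadratic term $-\Gamma^k_{mn}y^m_xy^n_x$, while the second summand produces the cubic term $\Gamma^1_{mn}y^m_xy^n_xy^k_x$ together with the corrections $\delta^k_m\Gamma^1_{11}y^m_x$ and $2\delta^k_m\Gamma^1_{1n}y^m_xy^n_x$ (after relabelling a summation index and using $\dot y^k/\dot x=y^k_x$). Collecting, one obtains exactly the right-hand side of~\eqref{eq:proj.conn.gen.multidim}, so the image point lies in $\mathcal{E}'$. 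The $k=1$ ``geodesic equation'' yields no constraint here --- consistently with the fact that the chart of $J^2(M,1)$ contains neither $y^1_x$ nor $y^1_{xx}$.

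Surjectivity onto $\mathcal{E}'$ follows by an explicit lift: given $p'=(x,y^i,y^i_x,y^i_{xx})\in\mathcal{E}'$, choose any $t\in\mathbb{R}$ and any $\dot x\neq0$, and set $\dot y^m:=y^m_x\dot x$ and $\ddot y^k:=-\Gamma^k_{ij}\dot y^i\dot y^j$ for $k=1,\dots,N$. Since $J^2\pi$ carries no constraints, this tuple is a point $p\in J^2\pi$, lying on $\mathcal{E}$ by construction; by the computation above, its image under~\eqref{eq:map:J2} has $y^k_{xx}$-coordinate equal to the right-hand side of~\eqref{eq:proj.conn.gen.multidim} evaluated at $(x,y^i,y^i_x)$, which equals $y^k_{xx}$ because $p'\in\mathcal{E}'$; the remaining coordinates visibly match, so the image of $p$ is $p'$. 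Finally, I would note that the fibres of this restricted map are precisely the orbits of reparametrization, which is the geometric content of the statement that~\eqref{eq:proj.conn.gen.multidim} is the quotient of~\eqref{eq:param.geod.equ}.

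The argument has no genuine conceptual obstacle: it is a single substitution followed by a careful separation of index ranges. The only points requiring attention are bookkeeping ones --- tracing which $\delta^k_m$-corrections in~\eqref{eq:proj.conn.gen.multidim} originate from the $\Gamma^1$-terms carried by $\ddot x$, symmetrising the non-symmetric-looking coefficient $\Gamma^k_{ij}-2\delta^k_i\Gamma^1_{1j}$ against the symmetric monomial $y^i_xy^j_x$, and keeping the whole discussion confined to the open locus $\dot x\neq0$ where the graph chart over $x$ makes sense.
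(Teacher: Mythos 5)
Your computation is correct, and it is exactly the argument the paper has in mind: the paper states Proposition~\ref{prop:proj} without writing out a proof, treating it as the classical elimination of the external parameter (it cites \cite{manno_advances} for this and later remarks that a ``straightforward computation'' shows the spray \eqref{eq:Dt} projects to \eqref{eq:Dx}). Your substitution of $\ddot y^k=-\Gamma^k_{ij}\dot y^i\dot y^j$ and $\ddot x=-\Gamma^1_{ij}\dot y^i\dot y^j$ into \eqref{eq:map:J2} on the locus $\dot x\neq0$, with the index bookkeeping reproducing the $\delta^k_m$-corrections in \eqref{eq:proj.conn.gen.multidim}, is precisely that computation, and the added surjectivity and fibre remarks are correct bonuses beyond the stated claim.
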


\noindent So, in view of Proposition~\ref{prop:proj}, we can interpret the projective connection~\eqref{eq:proj.conn.gen.multidim} (associated to the connection $\nabla$) as the quotient equation of~\eqref{eq:param.geod.equ} via the map~\eqref{eq:natural.map}.

From another point of view, we can interpret system~\eqref{eq:param.geod.equ} as the following $1$-dimensional distribution $J^1\pi=\mathbb{R}\times TM$:
\begin{equation}\label{eq:Dt}
\langle \overline{D}_t \rangle = \langle \partial_t + \dot{y}^k\partial_{y^k} - \Gamma^k_{ij}\dot{y}^i\dot{y}^j \partial_{\dot{y}^k}\rangle
\end{equation}
where $\overline{D}_t$ denotes the restriction of the total derivative operator $D_t:= \partial_t + \dot{y}^k\partial_{y^k} +\ddot{y}^k\partial_{\dot{y}^k}$ to system \eqref{eq:param.geod.equ}. The vector field $\overline{D}_t$ is called the \emph{spray} or the \emph{dynamical vector field} associated to system \eqref{eq:param.geod.equ}.
Analogously, we can view system~\eqref{eq:proj.conn.gen.multidim} as the following $1$-dimensional distribution on $J^1(M,1)=\mathbb{P}TM$
\begin{equation}\label{eq:Dx}
\langle \overline{D}_x \rangle
=\bigg\langle \partial_x + y^i_x\partial_{y^i}
-\bigg(
\Gamma_{11}^k
+ (2\Gamma_{1m}^k-\delta^k_m\Gamma_{11}^1)  y^m_x
+ (\Gamma_{jm}^k  - 2\delta^k_m \Gamma_{1j}^1 ) y^j_x y^m_x
- \Gamma_{ij}^1 \, y^i_x y^j_x y^k_x
\bigg) \partial_{y^k_x}
\bigg\rangle
\end{equation}
where $\overline{D}_x$ is the restriction of the total derivative $D_x:=\partial_x + y^i_x\partial_{y^i}+y^i_{xx}\partial_{y^i_x}$ to system  \eqref{eq:proj.conn.gen.multidim}.
\smallskip

\noindent A straightforward computation shows that the distribution~\eqref{eq:Dt} projects to the distribution~\eqref{eq:Dx} via the map~\eqref{eq:natural.map}. Thus, system~\eqref{eq:param.geod.equ} is a covering of the system~\eqref{eq:proj.conn.gen.multidim}, via the map~\eqref{eq:natural.map}, in the sense of Krasil'shchik-Vinogradov~\cite{Krasil}.

\subsection{First integrals of the geodesic equation and of its projective connection}
A first integral of \eqref{eq:param.geod.equ} is a function $F(t,y^i,\dot{y}^i)$ such that
$\overline{D}_t(F)=0$
(see \eqref{eq:Dt} for the definition of $\overline{D}_t$).
Let us assume, from now on, that~$\nabla$ is a Levi-Civita connection of a metric~$g$.
Therefore system~\eqref{eq:param.geod.equ} is equivalent to the Hamiltonian system with Hamiltonian function $H=\frac12 g^{ij}p_ip_j$, where~$p_i$ are the momenta. In this case the above criterion for~$F$ to be a first integral of ~\eqref{eq:param.geod.equ}  coincides with the vanishing of the Poisson bracket $\{H,F\}=0$, once replacing, in $F$,~$\dot{y}^i$ by~$g^{ij}p_j$.
Analogously, a first integral of \eqref{eq:proj.conn.gen.multidim} is a function $f(x,y^i,y^i_x)$ such that
$\overline{D}_x(f)=0$ (see \eqref{eq:Dx} for the definition of~$\overline{D}_x$).
Below we shall see how to associate an integral of~\eqref{eq:proj.conn.gen.multidim} starting from an integral of~\eqref{eq:param.geod.equ}.
This will also provide a way to construct an integral of~\eqref{eq:param.geod.equ} (and, consequently, an integral of~\eqref{eq:proj.conn.gen.multidim}) starting from a projective vector field.

\begin{proposition}\label{prop:f.i}
	If $F\left(x,y^i,\frac{\dot{y}^i}{\dot{x}}\right)$ is a first integral of system \eqref{eq:param.geod.equ}, then $F(x,y^i,y^i_x)$ is a first integral of system~\eqref{eq:proj.conn.gen.multidim}.
\end{proposition}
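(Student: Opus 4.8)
The plan is to exploit the fact, established around equation~\eqref{eq:map:J2}, that the natural map carries the geodesic spray $\overline{D}_t$ to the quotient spray $\overline{D}_x$. Concretely, I would first recall the precise relation between the two total-derivative operators on the relevant jet bundles: writing the coordinates on $J^1\pi$ as $(t,x,y^i,\dot x,\dot y^i)$ and those on $J^1(M,1)=\mathbb{P}TM$ as $(x,y^i,y^i_x)$, the map~\eqref{eq:map:J2} sends $y^i_x$ to $\dot y^i/\dot x$, and one has the operator identity $\overline{D}_t = \dot x\,(\pi^*\overline{D}_x)$ on the pullback, where $\pi$ denotes the map~\eqref{eq:natural.map}; this is exactly the content of the remark that ``the distribution~\eqref{eq:Dt} projects to the distribution~\eqref{eq:Dx}''. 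The factor $\dot x$ is harmless because $\dot x\neq 0$ for regular curves (we restricted to regular curves right after~\eqref{eq:natural.map}).

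Granting this, the proof is essentially a chain-rule bookkeeping argument. Suppose $F$ is a first integral of~\eqref{eq:param.geod.equ}, i.e.\ a function $F(x,y^i,y^i_x)$ (here I am already writing $F$ in the variables it depends on after the substitution $\dot y^i/\dot x \mapsto y^i_x$, which is the hypothesis's notation $F(x,y^i,\dot y^i/\dot x)$) such that $\overline{D}_t(F)=0$ when $F$ is regarded, via pullback along~\eqref{eq:map:J2}, as a function on (the restriction to system~\eqref{eq:param.geod.equ} of) $J^1\pi$. Since $F$ does not depend on $t$ or on $\dot x$ separately --- only on the ratio $y^i_x=\dot y^i/\dot x$ and on $x,y^i$ --- applying $\overline{D}_t$ to the pullback $\pi^*F$ and using $\overline{D}_t = \dot x\,\pi^*\overline{D}_x$ gives $\dot x\cdot\pi^*(\overline{D}_x F)=0$. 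Dividing by $\dot x\neq 0$ yields $\pi^*(\overline{D}_x F)=0$; since $\pi$ is a submersion (surjective on the relevant open sets), this forces $\overline{D}_x(F)=0$, i.e.\ $F(x,y^i,y^i_x)$ is a first integral of~\eqref{eq:proj.conn.gen.multidim}.

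The one point requiring genuine care --- and the place where I expect the real (if modest) obstacle to lie --- is making the operator identity $\overline{D}_t=\dot x\,\pi^*\overline{D}_x$ rigorous, rather than merely asserting ``the spray projects''. This amounts to computing $\overline{D}_t$ applied to the pullbacks $\pi^*(x)=x$, $\pi^*(y^i)=y^i$, and $\pi^*(y^i_x)=\dot y^i/\dot x$, and checking in each case that the result equals $\dot x$ times the pullback of the corresponding component of $\overline{D}_x$. The first two are immediate ($\overline{D}_t x=\dot x$, $\overline{D}_t y^i=\dot x\, y^i_x$); the third is the interesting one: $\overline{D}_t(\dot y^i/\dot x) = (\ddot y^i\dot x-\ddot x\dot y^i)/\dot x^2$, and substituting the geodesic relations $\ddot y^k=-\Gamma^k_{ij}\dot y^i\dot y^j$ (for all $k$, including $k=1$, i.e.\ $\ddot x=-\Gamma^1_{ij}\dot y^i\dot y^j$) and then dividing through by appropriate powers of $\dot x$ reproduces exactly $\dot x$ times the $\partial_{y^i_x}$-component of~\eqref{eq:Dx}. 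This is precisely the computation underlying Proposition~\ref{prop:proj}, so I would simply invoke Proposition~\ref{prop:proj} (and the accompanying statement that the distributions project) to avoid redoing it, and then conclude as above. I would also remark that the converse-flavored statement --- reconstructing an integral of~\eqref{eq:param.geod.equ} from one of~\eqref{eq:proj.conn.gen.multidim} --- is slightly more delicate because a general integral of the projective connection need not be the pullback of a $t$-independent, $\dot x$-homogeneous function, which is why only the stated direction is claimed here.
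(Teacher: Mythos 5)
Your argument is correct and is essentially the paper's own proof, which the paper dismisses as "a straightforward computation": the computation in question is exactly your verification that $\overline{D}_t$ applied to the pullbacks of $x,y^i,y^i_x=\dot y^i/\dot x$ equals $\dot x$ times the pullback of $\overline{D}_x$ applied to them (the content of Proposition~\ref{prop:proj} and the projection of the spray), after which division by $\dot x\ne0$ gives $\overline{D}_x(F)=0$. Nothing further is needed.
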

\begin{proof}
	This follows by a straightforward computation.
\end{proof}

\begin{corollary}\label{cor:int.proj.conn}
	If system \eqref{eq:param.geod.equ} admits a time-independent homogeneous quadratic integral~$h_{ij}\dot{y}^i\dot{y}^j$, in velocities, then system \eqref{eq:proj.conn.gen.multidim} admits the following first integral:
	\begin{equation}\label{eq:rational.int}
	\frac{h_{11}+2h_{1m}y^m_x+h_{mn}y^m_xy^n_x}{g_{11}+2g_{1m}y^m_x+g_{mn}y^m_xy^n_x}\,.
	\end{equation}
\end{corollary}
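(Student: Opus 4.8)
The plan is to deduce this immediately from Proposition~\ref{prop:f.i} together with conservation of energy. The point is that, besides $h_{ij}\dot y^i\dot y^j$, there is always a second time-independent homogeneous quadratic first integral of~\eqref{eq:param.geod.equ} available, namely the quadratic form $g_{ij}\dot y^i\dot y^j=2H$ associated with the metric itself: the identity $\overline D_t(g_{ij}\dot y^i\dot y^j)=0$ is just the statement that geodesics of $\nabla=\nabla^g$ have constant speed. Consequently, on the open set where $g_{ij}\dot y^i\dot y^j\neq 0$, the quotient
\[
  Q \;:=\; \frac{h_{ij}\dot y^i\dot y^j}{g_{ij}\dot y^i\dot y^j}
\]
is a ratio of two first integrals of~\eqref{eq:param.geod.equ} and hence a first integral itself; moreover it does not depend on $t$ and is homogeneous of degree $0$ in the velocities $\dot y^i$.

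The first step is then to make this homogeneity explicit. Writing $y^1=x$, so that $\dot y^1=\dot x$, and letting the remaining indices $m,n$ run over $2,\dots,N$, one factors $(\dot x)^2$ out of numerator and denominator of $Q$ and obtains
\[
  Q \;=\; \frac{h_{11}+2h_{1m}\,(\dot y^m/\dot x)+h_{mn}\,(\dot y^m/\dot x)(\dot y^n/\dot x)}{g_{11}+2g_{1m}\,(\dot y^m/\dot x)+g_{mn}\,(\dot y^m/\dot x)(\dot y^n/\dot x)} \;=\; F\!\left(x,\,y^i,\,\tfrac{\dot y^i}{\dot x}\right),
\]
where $F$ is precisely the rational expression in~\eqref{eq:rational.int}, read off with $y^i_x$ replaced by $\dot y^i/\dot x$. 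The second step is to invoke Proposition~\ref{prop:f.i}: since $F(x,y^i,\dot y^i/\dot x)=Q$ is a first integral of~\eqref{eq:param.geod.equ}, the function $F(x,y^i,y^i_x)$ — which is exactly~\eqref{eq:rational.int} — is a first integral of the projective connection~\eqref{eq:proj.conn.gen.multidim}. That completes the argument.

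I do not expect a real obstacle here; the substantive content is already carried by Proposition~\ref{prop:f.i}, and the rest is bookkeeping with the index convention $y^1=x$. The one point deserving care is the domain: $Q$, and hence~\eqref{eq:rational.int}, is defined only where $g_{ij}\dot y^i\dot y^j\neq 0$, i.e.\ away from the null cone of $g$ (for Riemannian $g$, away from the zero section). Since $\overline D_x$ is a local differential operator and this locus is open and dense, the conclusion $\overline D_x f=0$ for $f$ as in~\eqref{eq:rational.int} holds there, which is all that is claimed. It is also worth remarking, as a sanity check, that when $h_{ij}=g_{ij}$ the integral~\eqref{eq:rational.int} is the constant $1$, consistent with the fact that energy descends trivially to the quotient system.
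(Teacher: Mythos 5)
Your proposal is correct and follows essentially the same route as the paper: divide $h_{ij}\dot y^i\dot y^j$ by the energy integral $g_{ij}\dot y^i\dot y^j$, observe that the quotient is homogeneous of degree $0$ in the velocities and hence of the form $F\left(x,y^i,\frac{\dot y^i}{\dot x}\right)$, and then apply Proposition~\ref{prop:f.i}. Your added remarks on the domain where $g_{ij}\dot y^i\dot y^j\neq 0$ and the sanity check $h=g$ are fine but not needed for the argument.
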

\begin{proof}
	Since $h_{ij}\dot{y}^i\dot{y}^j$ is a first integral of~\eqref{eq:param.geod.equ}, also
	$$
	F=\frac{h_{ij}\dot{y}^i\dot{y}^j}{g_{ij}\dot{y}^i\dot{y}^j}
	$$
	is a first integral of system~\eqref{eq:param.geod.equ} as such a system always admits the polynomial~$g_{ij}\dot{y}^i\dot{y}^j$ as a first integral\footnote{In fact, if $f$ and $g$ are integrals of \eqref{eq:param.geod.equ}, then $\overline{D}_t\left(\frac{f}{g}\right)=\frac{\overline{D}_t(f)g-\overline{D}_t(g)f}{g^2}=0$, so that $\frac{f}{g}$ is an integral of~\eqref{eq:param.geod.equ}.}. It is of the form \smash{$F=F\left(x,y^i,\frac{\dot{y}^i}{\dot{x}}\right)$} as it is the quotient of two homogeneous polynomials. So, in view of Proposition~\ref{prop:f.i}, $F(x,y^i,y^i_x)$ is a first integral of~\eqref{eq:proj.conn.gen.multidim}, i.e., the function~\eqref{eq:rational.int}
	is the integral we were looking for.
\end{proof}
\begin{remark}
	It is natural to consider as integrals of \eqref{eq:proj.conn.gen.multidim} functions that are the ratio of two polynomials in the variables $y^m_x$ of the same degree as these functions form a closed class w.r.t. point transformations. In fact, the prolongation of such transformations to $J^1(M,1)=\mathbb{P}TM$ is the ratio of two polynomials of first degree in $y^m_x$. More precisely the prolongation to $J^1(M,1)$ of the point transformation
	$\big(x,y^i\big)\to \big(u(x,y^i),v^k(x,y^i)\big)$  is
	$$
	(x,y^i,y^i_x)\to\left(  u,v^j, \frac{v^j_x+v^j_{y^i}y^i_x}{u_x+u_{y^i}y^i_x} \right)
	$$
	and the aforementioned class of functions is invariant up to the above transformations.
\end{remark}


\begin{proposition}[\cite{Hiramatu,Connor_Prince}]\label{prop:right}
	If $X$ is a projective vector field of a $n$-dimensional metric $g$, then
	\begin{equation}\label{eq:first.integ.associated.to.proj.field.2}
	I_X=\left(X_{(i;j)}-\frac{2}{n+1}\mathrm{div}X\, g_{ij}\right)\dot{x}^i\dot{x}^j
	\end{equation}
	is a quadratic first integral of the geodesic flow of $g$.
	The following notation has been used:
	\begin{align*}
	X_{(i;j)} &:= \frac12(X_{i;j}+X_{j;i})
	=\frac12\left(X^a_{,j}\,g_{ai}+X^ag_{ai,j} + X^a_{,i}\,g_{aj}+X^ag_{aj,i} -2X^ag_{ak}\Gamma^k_{ji} \right)
	\\
	\mathrm{div}X &:= X^i_{;i} =X^i_{,i} + X^j\Gamma^i_{ij}
	=\frac{1}{\sqrt{\det g}}(\sqrt{\det g}\,X^i)_{,i}
	\end{align*}
\end{proposition}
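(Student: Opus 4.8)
\textbf{Proof plan for Proposition~\ref{prop:right}.}
The plan is to verify directly that the symmetric $(0,2)$-tensor
$K_{ij} := X_{(i;j)}-\frac{2}{n+1}(\operatorname{div}X)\,g_{ij}$
is a Killing tensor, i.e.\ that $K_{(ij;k)}=0$, which by the standard correspondence \eqref{eqn:killing.tensor} is equivalent to $I_X=K_{ij}\dot x^i\dot x^j$ being a quadratic first integral of the geodesic flow. The starting point is the hypothesis that $X$ is projective: by \eqref{eq:proj.vect.field} there is a $1$-form $\mu$ with $\lie_X\nabla = \mu\otimes\Id+\Id\otimes\mu$, equivalently $\lie_X\Gamma^i_{jk}=\mu_j\delta^i_k+\mu_k\delta^i_j$. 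A classical identity (Yano) expresses the Lie derivative of the Christoffel symbols in terms of covariant derivatives of $X$, namely $\lie_X\Gamma^i_{jk} = X^i_{\ ;jk} + R^i_{\ jkl}X^l$ (with the curvature convention fixed so that this holds); combining the two gives
\[
 X_{i;jk} + R_{ijkl}X^l = g_{ij}\mu_k + g_{ik}\mu_j\,.
\]
The first step is to rewrite this purely in terms of $X_{(i;j)}$ and $\mu$. Lowering indices and symmetrizing suitably, one also needs the antisymmetric part: decompose $X_{i;j}=X_{(i;j)}+X_{[i;j]}$, and use the Ricci identity $X_{i;jk}-X_{i;kj}=R^l_{\ ijk}X_l$ to control the ordering of the second covariant derivatives.

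Next I would pin down $\mu$ in terms of $X$. Contracting the displayed relation over $i$ and $k$ (raising with $g$) yields $\mu_j$ as a multiple of $\partial_j(\operatorname{div}X)$: tracing $\lie_X\Gamma^i_{jk}=\mu_j\delta^i_k+\mu_k\delta^i_j$ on $i,k$ gives $\lie_X\Gamma^i_{ji}=(n+1)\mu_j$, and since $\Gamma^i_{ij}=\partial_j\log\sqrt{\det g}$ one computes $\lie_X\Gamma^i_{ij}=\partial_j(\operatorname{div}X)$, hence $\mu_j=\frac{1}{n+1}\,\partial_j(\operatorname{div}X)=\frac{1}{n+1}(\operatorname{div}X)_{;j}$. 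This is exactly the normalization that appears in \eqref{eq:first.integ.associated.to.proj.field.2}, and it is the reason the coefficient $\frac{2}{n+1}$ is the right one: the term $-\frac{2}{n+1}(\operatorname{div}X)g_{ij}$ is designed precisely so that its covariant derivative cancels the $g_{ij}\mu_k$-type terms coming from the projectivity condition.

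The main computational step is then to form $\nabla_k X_{(i;j)}$, symmetrize over $(i,j,k)$, substitute the projectivity relation to replace the second-covariant-derivative/curvature combination, and observe that the curvature terms drop out upon symmetrization (the symmetrized Bianchi-type contraction $R_{(ij|k|l)}X^l$ vanishes because $R_{ijkl}$ is antisymmetric in its first pair of indices while we symmetrize $i$ with $j$) — while the remaining $\mu$-terms combine, via $\mu_j=\frac1{n+1}(\operatorname{div}X)_{;j}$, to $\frac{2}{n+1}(\operatorname{div}X)_{;(k}g_{ij)}$, which is exactly $+\frac{1}{2}\nabla_{(k}\big(\frac{2}{n+1}(\operatorname{div}X)\,g_{ij)}\big)$ (using $\nabla g=0$). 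Hence $\nabla_{(k}K_{ij)}=0$, as desired. Alternatively, and perhaps more cleanly, one can bypass the index computation entirely: since $X$ is projective, $\lie_X$ preserves the solution space $\solSp$ of \eqref{eqn:linear.system} (noted in the excerpt), and $I_X$ can be identified with the Benenti-type integral $F$ from Definition~\ref{def:benenti.g} built from the pair $\big(\sigma,\lie_X\sigma\big)$ where $\sigma$ corresponds to $g$; the integral property of $F$ is then Theorem-level input. I would present the direct tensorial computation as the primary argument since it does not require $g$ to have degree of mobility $\ge 2$.

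I expect the main obstacle to be bookkeeping: getting the curvature sign conventions in Yano's formula consistent with the Ricci identity used to reorder $X_{i;jk}$, and keeping the symmetrizations straight so that it is transparent that the curvature contribution genuinely vanishes under total symmetrization rather than merely being reshuffled. Once the identity $X_{i;jk}+R_{ijkl}X^l=g_{ij}\mu_k+g_{ik}\mu_j$ with $\mu_j=\frac1{n+1}(\operatorname{div}X)_{;j}$ is established and correctly oriented, the cancellation is essentially forced and the remainder is routine.
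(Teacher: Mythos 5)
The paper does not prove Proposition~\ref{prop:right} at all: it is quoted from the cited literature, so there is no internal argument to compare against. Your plan is correct and is, in essence, the classical proof that those references give. The chain of steps is sound: from \eqref{eq:proj.vect.field} and the Yano identity for $\lie_X\Gamma^i_{jk}$, the trace over $i=k$ yields $(n+1)\mu_j=\lie_X\Gamma^i_{ij}=\partial_j(\mathrm{div}X)$, so $\mu_j=\tfrac{1}{n+1}(\mathrm{div}X)_{;j}$; the curvature contribution disappears under total symmetrization over the three free lower indices because those indices necessarily contain both indices of an antisymmetric pair of the Riemann tensor (this is convention-independent, which defuses the sign worry you raise); and then $\nabla_{(k}X_{(i;j))}=2\mu_{(k}g_{ij)}=\tfrac{2}{n+1}(\mathrm{div}X)_{;(k}g_{ij)}$ cancels exactly against the derivative of $-\tfrac{2}{n+1}\,\mathrm{div}X\,g_{ij}$, giving $K_{(ij;k)}=0$ and hence, via \eqref{eqn:killing.tensor}, the first integral $I_X$.

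Two small remarks. First, a factor-of-two slip in one sentence: the symmetrized $\mu$-terms equal $\nabla_{(k}\bigl(\tfrac{2}{n+1}\,\mathrm{div}X\,g_{ij)}\bigr)$ \emph{without} the extra $\tfrac12$ you insert when rewriting them; taken literally, that half would leave an uncancelled residue, but your preceding clause (the $\tfrac{2}{n+1}(\mathrm{div}X)_{;(k}g_{ij)}$ form) is the correct bookkeeping and the cancellation is exact with the coefficient $\tfrac{2}{n+1}$ as stated. Second, your fallback route through the Liouville space and Definition~\ref{def:benenti.g} is weaker than the direct computation and you are right to demote it: identifying $I_X$ with an integral built from the pair $(\sigma,\lie_X\sigma)$ presupposes metrizability data and non-degeneracy hypotheses (and Definition~\ref{def:benenti.g} is stated for Riemannian $g$), whereas the tensorial verification works for any metric and any projective $X$, which is exactly the generality the proposition claims.
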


\begin{corollary}
	Let $X$ be a projective vector field of a metric $g$. Let
	\begin{equation}\label{eq:hij}
	h_{ij}:=X_{(i;j)}-\frac{2}{n+1}\mathrm{div}X\, g_{ij}\,.
	\end{equation}
	Inserting~\eqref{eq:hij} into~\eqref{eq:rational.int}, we obtain a first integral of the projective connection associated to~$g$.
\end{corollary}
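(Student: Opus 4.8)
The plan is to obtain the statement as a direct composition of the two immediately preceding results, without any new computation. First I would invoke Proposition~\ref{prop:right}: since $X$ is a projective vector field of $g$, the quadratic form $I_X=h_{ij}\dot{y}^i\dot{y}^j$, with $h_{ij}$ exactly as in~\eqref{eq:hij}, is a first integral of the geodesic flow~\eqref{eq:param.geod.equ}. This identifies the coefficient tensor appearing in~\eqref{eq:rational.int} with the one canonically associated to $X$.

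Next I would observe that $h_{ij}$ is assembled from $X$, $g$ and the Christoffel symbols of $g$ via covariant differentiation on $M$, hence it is a tensor field on $M$ alone; in particular its components depend only on the base variables $y^i$ and not on the external parameter $t$ along the geodesic. Consequently $I_X$ is a \emph{time-independent} homogeneous quadratic integral in velocities, which is precisely the hypothesis required by Corollary~\ref{cor:int.proj.conn}. Applying that corollary with this choice of $h_{ij}$ then yields that the rational function~\eqref{eq:rational.int} is a first integral of the projective connection~\eqref{eq:proj.conn.gen.multidim} associated to $g$, which is exactly the assertion.

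The only point that genuinely needs to be checked — and hence the sole (minor) obstacle — is the time-independence of the quadratic integral furnished by Proposition~\ref{prop:right}, so that Corollary~\ref{cor:int.proj.conn} may be applied verbatim; but, as noted above, this is immediate from the fact that $h_{ij}$ is a tensor field on $M$. I therefore expect the proof to consist of a single short paragraph stringing together Propositions~\ref{prop:right}, the trivial time-independence remark, and Corollary~\ref{cor:int.proj.conn}.
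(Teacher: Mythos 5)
Your argument is exactly the paper's proof: it strings together Proposition~\ref{prop:right} (which already provides the quadratic integral as time-independent) with Corollary~\ref{cor:int.proj.conn} applied to $h_{ij}$ from~\eqref{eq:hij}. Your extra remark on time-independence is a harmless elaboration of what the paper states directly, so the two proofs coincide.
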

\begin{proof}
	In view of Proposition \ref{prop:right}, $h_{ij}\dot{x}^i\dot{x}^j$ is a time-independent first integral of the geodesic flow of $g$. Then, in view of Corollary \ref{cor:int.proj.conn}, function \eqref{eq:rational.int} with $h_{ij}$ defined by \eqref{eq:hij} is an integral of the corresponding projective connection associated to the Levi-Civita connection of $g$.
\end{proof}

\begin{remark}\label{rem:lagrangian}
	The classical equation of geodesics is the Euler-Lagrange equation of the Lagrangian density  $\sqrt{g_{ij}\dot{x}^i\dot{x}^j}dt$, which is the ``horizontalization''\footnote{Let $\alpha=adt+b_idy^i$ be a $1$-form on $J^1\pi=\mathbb{R}\times TM$ with values in $T^*J^0\pi=T^*(\mathbb{R}\times M)$. The horizontalization of $\alpha$ is the differential form $(a+b_i\dot{y}^i)dt$, that is a differential $1$-form on $J^1\pi$ with values in $T^*\mathbb{R}$. For more details and generalization of this procedure see \cite{Krasil}.} of the pull-back of
	\begin{equation}\label{eq:lagr.dens}
	L = \sqrt{g_{11}+2g_{1i}y^i_x+g_{ij}y^i_xy^j_x}\,dx\,=: \mathsf{L}\,dx\,.
	\end{equation}
	via the map \eqref{eq:map:J2}, restricted to first jet spaces. The projective connection is the Euler-Lagrange equation of the Lagrangian density \eqref{eq:lagr.dens}.
	
	A \emph{divergence} symmetry is a vector field $X=X^i\partial_{y^i}$, where $y^1:=x$, such that the Lie derivative preserves~\eqref{eq:lagr.dens} up to a total divergence, i.e., there exists a function $B=B(y^1,\dots,y^n)$ such that
	\begin{equation}\label{eq:divergence.simm}
	\lie_X(\mathsf{L}\,dx)=\big(X^{(1)}(\mathsf{L})+\mathsf{L}D_x(X^1)\big)dx=D_x(B)dx\,\,
	\footnote{If B=0 then such symmetries are called \emph{variational}. Variational symmetries of the Lagrangian density \eqref{eq:lagr.dens} are the infinitesimal isometries of $g$.}
	\end{equation}
	where $X^{(1)}=X^i\partial_{y^i} + \sum_{i=2}^n\big(D_x(X^i)-y^i_xD_x(X^1)\big)\partial_{y^i_x}$ is the prolongation of $X$ on $J^1(M,1)$ and $D_x$ is the total derivative truncated to the first order. The Noether Theorem says that to any divergence symmetry of~$\mathsf{L}\,dx$ one can associate a first integral of the corresponding Euler-Lagrange equation (see Section 4.4 of \cite{Olver} for a proof). A direct computation (see the discussion of Example~\ref{ex:sphere} below) shows that, in general, projective symmetries are not of divergence type, so the first integral associated to a projective symmetry cannot be computed by the Noether Theorem.
\end{remark}

\begin{example}\label{ex:sphere}
	Let us consider the round metric on the sphere,
	\begin{equation}\label{eq:metric.sphere}
	ds^2=\sin(y)^2dx^2+dy^2\,.
	\end{equation}
	Then the associated projective connection is
	\begin{equation}\label{eq:proj.conn.sphere}
	y_{xx} = \sin(y)\cos(y)+2\cot(y)y_x^2
	\end{equation}
	The vector field $X=\sin(y)^2\cos(x)\partial_y$ is a projective vector field and, by applying \eqref{eq:first.integ.associated.to.proj.field.2}, we obtain that
	\[
	K_X = -2\sin(y)^3\cos(x)\cos(y)dx^2-2\sin(y)^2\sin(x)dxdy
	\]
	is a Killing tensor, implying that
	\[
	F=\sin(y)^3\cos(x)\cos(y)\dot{x}^2+\sin(y)^2\sin(x)\dot{x}\dot{y}
	\]
	is a first integral of the geodesic flow of \eqref{eq:metric.sphere}.
	Furthermore, in view of Corollary \ref{cor:int.proj.conn}, the function
	$$
	f=\frac{\sin(y)^3\cos(x)\cos(y)+\sin(y)^2\sin(x)y_x}{\sin(y)^2+y_x^2}
	$$
	is a first integral of \eqref{eq:proj.conn.sphere}. Finally, we note that $X$ is not a divergence symmetry. In fact, in this case, \eqref{eq:divergence.simm} leads to a polynomial in $y_x$ of fourth degree: the vanishing of the coefficients of such polynomial lead to an incompatible first-order system of PDE (in the unknown function~$B$).
\end{example}

\subsection{Quadratic integrals and projectively equivalent metrics}\label{sec:integrals.projective.geometry}

There is a well-known interdependence of first integrals that are quadratic polynomials in the momenta (resp., in the velocities) and projectively equivalent metrics. To this end, let us recall the correspondence between integrals polynomial in momenta and Killing tensors~\eqref{eqn:killing.tensor}, as discussed in Section~\ref{sec:connections.and.benenti.tensors}.
In turn, the Killing tensors, and thus the associated polynomial integrals, are in correspondence with projectively equivalent metrics.
\begin{theorem}[\cite{topalov_2003}]\label{th:topalov}
	Let $g$ and $\pe{g}$ be metrics on an $n$-dimensional manifold. If they are projectively equivalent then the tensor
	\begin{equation}\label{eqn:integral.g}
	K:=\left(\frac{\det(g)}{\det(\pe{g})}\right)^{\frac{2}{n+1}}\pe{g}\,,
	\end{equation}
	is a Killing tensor of the geodesic flow of $g$. In dimension $n=2$, the above implication is actually an equivalence. In this case, if $K$ is a Killing tensor of order two of a 2-dimensional metric $g$, then
	\begin{equation}\label{eq:metric.from.int}
	\pe{g}=\left(\frac{\det(g)}{\det(K)}\right)^2\,K
	\end{equation}
	is projectively equivalent to $g$.
\end{theorem}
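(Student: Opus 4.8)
The plan is to prove the two implications of Theorem~\ref{th:topalov} separately: the first in arbitrary dimension, the second (the converse) only for $n=2$.

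For the forward implication I would start from the classical description \eqref{eqn:Gamma.bar.Gamma} of projective equivalence: there is a $1$-form $\alpha$ with $\pe{\Gamma}^k_{ij}-\Gamma^k_{ij}=\delta^k_i\alpha_j+\delta^k_j\alpha_i$, where $\Gamma$ and $\pe{\Gamma}$ are the Christoffel symbols of $g$ and $\pe{g}$. Tracing over $k=j$ pins down $\alpha$ explicitly, $\alpha_i=\tfrac{1}{2(n+1)}\,\partial_i\ln|\det\pe{g}/\det g|$. Subtracting the two metricity identities $\nabla_k g_{ij}=0$ and $\pe{\nabla}_k\pe{g}_{ij}=0$ then gives, with $\nabla$ the Levi-Civita connection of $g$,
\[
 \nabla_k\pe{g}_{ij}=2\alpha_k\,\pe{g}_{ij}+\alpha_i\,\pe{g}_{kj}+\alpha_j\,\pe{g}_{ki}\,.
\]
Writing $K_{ij}=\Theta\,\pe{g}_{ij}$ with $\Theta=|\det g/\det\pe{g}|^{2/(n+1)}$, one finds $\partial_k\ln\Theta=-4\alpha_k$, hence
\[
 \nabla_k K_{ij}=\Theta\big(-4\alpha_k\,\pe{g}_{ij}+2\alpha_k\,\pe{g}_{ij}+\alpha_i\,\pe{g}_{kj}+\alpha_j\,\pe{g}_{ki}\big)\,.
\]
Taking the full symmetrization over $(k,i,j)$, each of the last three terms becomes $\alpha_{(k}\,\pe{g}_{ij)}$ and the coefficients add up to $-4+2+1+1=0$, so $\nabla_{(k}K_{ij)}=0$, i.e.\ \eqref{eqn:killing.tensor} with $d=2$. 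This direction is therefore a short computation, and the exponent $\tfrac{2}{n+1}$ in \eqref{eqn:integral.g} is exactly what makes the cancellation work.

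For the \emph{converse}, fix a two-dimensional $g$ and a rank-$2$ Killing tensor $K$; non-degeneracy of $K$ is tacitly assumed, since otherwise $\pe{g}$ in \eqref{eq:metric.from.int} is not a metric. Set $\pe{g}_{ij}:=(\det g/\det K)^2\,K_{ij}$. A short algebraic check with determinants shows that this inverts \eqref{eqn:integral.g} (if $\pe{g}$ were projectively equivalent to $g$, formula \eqref{eqn:integral.g} would return $K$), so the only real point is that $\pe{g}$ is genuinely projectively equivalent to $g$. I would prove this by reversing the computation above: it suffices to exhibit a $1$-form $\beta$ with $\nabla_k\pe{g}_{ij}=2\beta_k\pe{g}_{ij}+\beta_i\pe{g}_{kj}+\beta_j\pe{g}_{ki}$, because then the symmetric connection $\pe{\Gamma}:=\Gamma+\delta\otimes\beta+\beta\otimes\delta$ is projectively equivalent to $\nabla$ by \eqref{eqn:Gamma.bar.Gamma} and annihilates $\pe{g}$, hence is its Levi-Civita connection. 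Contracting the required identity with $\pe{g}^{ij}$ forces $\beta_k=\tfrac16\,\partial_k\ln|\det\pe{g}/\det g|=\tfrac12\,\partial_k\ln|\det g/\det K|$; the remaining content is that with this $\beta$ the trace-free part of the identity holds as well.

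That last step is the main obstacle, and it is here that $n=2$ is indispensable. There are two natural ways to close it. (i) Introduce the $g$-self-adjoint endomorphism $L:=g^{-1}K$; rewrite the Killing condition \eqref{eqn:killing.tensor} for $K$ as a first-order equation for $L$, and use the two-dimensional Cayley--Hamilton identity $\det(L)\,L^{-1}=\tr(L)\,\Id-L$ --- equivalently, that the Benenti tensor \eqref{eqn:benenti.tensor} of $g,\pe{g}$ equals $\tr(L)\,\Id-L$ --- to verify that this Benenti tensor satisfies the geodesic-equivalence equation. (ii) Invoke Theorem~\ref{thm:east.mat} in the two-dimensional form \eqref{eqn:linear.system.2D}: both the Killing equation $\nabla_{(k}K_{ij)}=0$ and the system \eqref{eqn:linear.system.2D} consist of four scalar PDEs in three unknowns, and one matches them term by term to see that $\pe{\sigma}^{ij}:=|\det\pe{g}|^{1/3}\pe{g}^{ij}$ solves \eqref{eqn:linear.system.2D} relative to $\nabla$. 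Either way, dimension $2$ enters essentially: for $n\geq3$ the mixed-symmetry part of $\nabla_kK_{ij}$ is larger, not every rank-$2$ Killing tensor arises from a projectively equivalent metric, and the implication genuinely fails --- which is why Theorem~\ref{th:topalov} upgrades to an equivalence only in the plane.
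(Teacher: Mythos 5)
First, note that the paper does not prove Theorem~\ref{th:topalov} at all: it is imported verbatim from \cite{topalov_2003}, so your attempt can only be measured against the standard argument. Your forward direction is complete and correct, and is exactly that standard argument: tracing \eqref{eqn:Gamma.bar.Gamma} gives $\alpha_i=\tfrac{1}{2(n+1)}\partial_i\ln|\det\pe{g}/\det g|$, the metricity of $\pe{g}$ for its own connection gives $\nabla_k\pe{g}_{ij}=2\alpha_k\pe{g}_{ij}+\alpha_i\pe{g}_{kj}+\alpha_j\pe{g}_{ki}$, the weight factor contributes $\partial_k\ln\Theta=-4\alpha_k$, and the symmetrization cancels ($-4+2+1+1=0$), yielding \eqref{eqn:killing.tensor}. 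The inversion check that \eqref{eq:metric.from.int} undoes \eqref{eqn:integral.g} is also correct.

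The gap is in the converse, which is the nontrivial half of the theorem. Your reduction is fine: it suffices to produce a $1$-form $\beta$ with $\nabla_k\pe{g}_{ij}=2\beta_k\pe{g}_{ij}+\beta_i\pe{g}_{kj}+\beta_j\pe{g}_{ki}$, and the trace over $\pe{g}^{ij}$ forces $\beta_k=\tfrac12\partial_k\ln|\det g/\det K|$. But the trace-free part of this identity --- which you yourself flag as ``the main obstacle'' and as the point where $n=2$ enters --- is never verified: both of your proposed closings, (i) the Cayley--Hamilton/Benenti route and (ii) the match with \eqref{eqn:linear.system.2D}, are descriptions of what must be checked rather than the check itself, and that check is precisely the content of the converse. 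Route (ii) is the cleanest way to actually finish: from $\pe{g}=(\det g/\det K)^2K$ one computes $\det\pe{g}=(\det g)^4(\det K)^{-3}$, hence $\pe{\sigma}^{ij}=|\det\pe{g}|^{1/3}\pe{g}^{ij}=|\det g|^{-2/3}\,\mathrm{adj}(K)^{ij}$, where $\mathrm{adj}(K)=\det(K)\,K^{-1}$ is the adjugate of the coordinate matrix $(K_{ij})$ and is \emph{linear} in $K$ because $n=2$. One must then substitute this $\pe{\sigma}$ into the four equations \eqref{eqn:linear.system.2D} (equivalently, into \eqref{eqn:linear.system} for the Levi-Civita connection of $g$) and show they are equivalent, component by component, to the four components of $\nabla_{(a}K_{bc)}=0$; only then does Theorem~\ref{thm:east.mat} deliver projective equivalence of $\pe{g}$ and $g$. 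Until that (finite, but essential) computation is displayed, the converse remains a plausible sketch rather than a proof; your closing remark about failure for $n\geq3$ is correct but does not substitute for it.
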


\noindent So, at least in dimension~2, the projective equivalence of two metrics and their integrability are intimately related.
Let~$g$ and~$\pe{g}$ be non-proportional $2$-dimensional projectively equivalent metrics, then the metric $g$ gives rise to an integrable system~$(H,I)$, where~$H=\frac12\,g_{ij}\xi^i\xi^j$ is the Hamiltonian rewritten in terms of velocities. Another integral is given by contracting the Killing tensor~\eqref{eqn:integral.g} with the velocities;
in terms of the Benenti tensor, $L=L(g,\pe{g})$, this integral is obtained as~\cite{topalov_2003}
\begin{equation*}
I(\xi) = \det(L)\,g\big(L^{-1}(\xi),\xi\big)\,.
\end{equation*}
Similarly, an integrable system can be defined for the metric $\pe{g}$.
\medskip

\noindent With Lemma~\ref{la:L.X.nonproportional} in mind, let us pose the following question: Given a metric~$g$ with degree of mobility~$\mathcal{D}$, admitting one, essential projective symmetry $X$, can we reconstruct all projectively equivalent metrics without solving~\eqref{eqn:linear.system.2D}?
We can actually gather the answer from~\cite{manno_2018,matveev_2012}, see also Lemma~\ref{la:L.X.nonproportional}.
\begin{proposition}[\cite{manno_2018,matveev_2012}]\label{prop:IX.independence}

\

\noindent (a) Let $g$ be a $2$-dimensional manifold that admits (up to multiplication by a constant) exactly one projective vector field $X$. We assume that $X$ is not Killing and denote the Hamiltonian by $H=\frac12g^{ij}p_ip_j$.
Then the degree of mobility~$\mathcal{D}$ is either 1, 2 or 3.

\noindent (b) The Lie derivative $\lie_Xg$ allows to reconstruct the full space~$\solSp$ of solutions to~\eqref{eqn:linear.system.2D}, at least in generic cases when~$X$ is essential for~$g$.

\begin{description}[noitemsep,align=left,leftmargin=*,font=\underline]
\item[$\mathcal{D}=1$] In this case, the projective class~$\projcl(g)$ of $g$ is given by $\{kg,k\ne0\}$, and thus $X$ is a homothety for any $g\in\projcl(g)$. The quadratic integral~$I_X$ associated to $X$ is a multiple of the Hamiltonian $H$ and $\lie_Xg$ is proportional to~$g$.
\item[$\mathcal{D}=2$] The space of quadratic integrals is spanned by $H$ and $I_X$ if $X$ is essential.
\item[$\mathcal{D}=3$] If $X$ is not homothetic for $g$, $H$ and $I_X$ span a 2-dimensional space. If this is not a $\lie_X$-invariant subspace of~$\solSp$, then we can define $J_X$ from a second Lie derivative of $g$, i.e.\ from~$\lie_X(\lie_Xg)$. In the first case,~$H$ and~$I_X$ span only the 2-dimensional invariant subspace. In the latter case, $H, I_X$ and $J_X$ span $\solSp$.
\end{description}
\end{proposition}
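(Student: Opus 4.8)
The plan is to organize the argument around the linear action $\lie_X\colon\solSp\to\solSp$ of~\eqref{eqn:LX.action} together with the dimension-two identification of $\solSp$ with the space of quadratic integrals of the geodesic flow of~$g$. Part~(a) is quick: $\mathcal{D}\geq1$ because $g$ itself produces, via~\eqref{eqn:sigma.g}, a non-zero $\sigma\in\solSp$, while for the upper bound I would invoke the known restrictions on the degree of mobility of $2$-dimensional metrics (\cite{matveev_2012}, see also~\cite{bryant_2008}): a $2$-dimensional metric with $\mathcal{D}\geq4$ has constant sectional curvature, hence an $8$-dimensional algebra of projective vector fields, which contradicts the hypothesis that $X$ spans (up to rescaling) the whole space of projective vector fields. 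Thus $\mathcal{D}\in\{1,2,3\}$; the hypothesis that $X$ is not Killing plays no role here and only serves to separate the three sub-cases of part~(b).

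For part~(b), the first step is to fix a natural linear isomorphism $\kappa\colon\solSp\to\{\text{quadratic integrals of }H=\tfrac12 g^{ij}p_ip_j\}$. This is the content of Theorem~\ref{thm:east.mat}, read in dimension two through Theorem~\ref{th:topalov}: a non-degenerate $\sigma\in\solSp$ represents a metric $\pe{g}$ projectively equivalent to~$g$, formula~\eqref{eqn:integral.g} turns $\pe{g}$ into a Killing tensor of the geodesic flow of~$g$, and contraction with the velocities yields a quadratic integral; up to the fixed conformal weight in~\eqref{eqn:integral.g} and a locally constant sign, this assignment is $\mathbb{R}$-linear, so $\kappa$ intertwines $\lie_X$ on $\solSp$ with the natural $X$-action on integrals, and the solution $\sigma$ coming from $g$ itself is sent to a non-zero multiple of $H$. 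The single genuinely computational point is to check that $\kappa(\lie_X\sigma)$ agrees, modulo a multiple of $H$ and an overall constant, with the integral $I_X$ of Proposition~\ref{prop:right}: I would do this by comparing~\eqref{eq:hij} with~\eqref{eqn:integral.g} for $\pe{g}=\lie_X g$, or --- more economically --- by observing that under our standing hypotheses the space of integrals modulo $\mathbb{R}H$ has dimension at most~$2$, so $I_X$ must lie in $\spanned(H,\kappa(\lie_X\sigma))$ and is non-zero there precisely when $X$ is essential (Lemma~\ref{la:L.X.nonproportional}).

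With this dictionary, the three cases are short. If $\mathcal{D}=1$, then $\solSp=\mathbb{R}\sigma$; a brief look at the weights in~\eqref{eqn:sigma.g} shows that two elements of $\solSp$ are proportional iff the metrics they represent are, so $\projcl(g)=\{kg:k\neq0\}$ and $X$ is a homothety for every metric in the class, while $\lie_X\sigma\in\mathbb{R}\sigma$ forces $\lie_X g\propto g$ and $I_X\in\mathbb{R}H$. If $\mathcal{D}=2$ and $X$ is essential, then $\sigma$ and $\lie_X\sigma$ are independent by Lemma~\ref{la:L.X.nonproportional} and hence span $\solSp$, so the quadratic integrals are $\spanned(H,I_X)$. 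If $\mathcal{D}=3$ and $X$ is not homothetic, then $\sigma,\lie_X\sigma$ span a plane $V\subseteq\solSp$ with $\kappa(V)=\spanned(H,I_X)$; one then inspects $\lie_X^2\sigma=\lie_X(\lie_X\sigma)\in\solSp$: either $\lie_X^2\sigma\in V$, so $V$ is $\lie_X$-invariant and $H,I_X$ span exactly this invariant $2$-plane (iterating $\lie_X$ on $g$ cannot reach the third solution), or $\lie_X^2\sigma\notin V$, in which case $\sigma,\lie_X\sigma,\lie_X^2\sigma$ are linearly independent, span $\solSp$, and $J_X:=\kappa(\lie_X^2\sigma)$ --- which is built from $\lie_X(\lie_X g)$ --- completes $H,I_X$ to a basis of the space of quadratic integrals.

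I expect the main obstacle to be exactly the identification $\kappa(\lie_X\sigma)\equiv I_X$ modulo $\mathbb{R}H$, i.e.\ reconciling the Lie-derivative construction of ``the integral attached to~$X$'' on the metrization space with formula~\eqref{eq:first.integ.associated.to.proj.field.2}; and, for the borderline sub-case of $\mathcal{D}=3$ with a $\lie_X$-invariant $2$-plane, I would not aim for a fully general reconstruction but simply point to an explicit model from Appendix~\ref{app:normal.forms.dom.3}, consistent with the ``generic cases'' wording of the proposition.
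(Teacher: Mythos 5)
The paper does not actually prove this proposition: it is imported wholesale from \cite{manno_2018,matveev_2012}, with only the pointer ``see also Lemma~\ref{la:L.X.nonproportional}'' as justification. So the relevant comparison is with the argument in those references, and your reconstruction follows essentially that route: the Koenigs-type bound (degree of mobility $\geq4$ in dimension $2$ forces constant curvature, hence an $8$-dimensional projective algebra, contradicting the hypothesis) for part~(a); the linear identification of $\solSp$ with the space of quadratic integrals in dimension~$2$; Lemma~\ref{la:L.X.nonproportional} for the $\mathcal{D}=2$ case; and the dichotomy on whether $\spanned(\sigma,\lie_X\sigma)$ is $\lie_X$-invariant for $\mathcal{D}=3$. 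Your case analysis is sound, and you correctly isolate the one genuinely computational point, namely that the integral attached to $\lie_X\sigma$ under this identification is $I_X$ of Proposition~\ref{prop:right} modulo $\mathbb{R}H$ and an overall constant.

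Two cautions on exactly that point. First, your ``more economical'' shortcut does not work in the case where it is most needed: when $\mathcal{D}=3$ the space of quadratic integrals modulo $\mathbb{R}H$ is $2$-dimensional, so knowing that $I_X$ lies in it does not place $I_X$ inside the particular plane $\spanned(H,\kappa(\lie_X\sigma))$; the dimension count only settles the matter for $\mathcal{D}\leq2$. Second, the direct route needs a small correction of bookkeeping: you cannot feed $\pe{g}=\lie_Xg$ into~\eqref{eqn:integral.g}, because $\lie_Xg$ is in general not a metric of the projective class (it is $\lie_X\sigma$, not $\lie_Xg$, that solves the linear system). The identification must go through the linear $2$-dimensional correspondence $\sigma\mapsto K$ (essentially $K\propto|\det(g)|^{\nicefrac23}\,\mathrm{adj}(\sigma)$, which is what makes Theorem~\ref{th:topalov} linear in $\sigma$ in dimension~$2$), applied to $\lie_X\sigma$ and compared with~\eqref{eq:hij}; this is precisely the computation carried out in \cite{matveev_2012}, and it is what ties ``reconstruction from $\lie_Xg$'' to $I_X$. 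With that verification done honestly rather than by dimension count, your argument is a correct self-contained substitute for the citation.
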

\noindent Note that in case of a Killing vector field $X$, we are able to choose coordinates $(x,y)$ such that $g=f(y)(dx^2+dy^2)$ and $X=\partial_x$.
A straightforward consequence of Proposition~\ref{prop:IX.independence} is the following observation:
If a 2-dimensional metric $g$ admits an essential projective vector field, then there exists a metric in $\projcl(g)$ non homothetic to~$g$.

\subsection{Integration of the superintegrable projective connection by rational integrals}\label{sec:integration.of.connection}
In the case of $2$-dimensional metrics, when we have two different integrals of the geodesic flow, say~$I_1$ and~$I_2$, then in view of Corollary~\ref{cor:int.proj.conn} we have also two integrals of the associated projective connection, say~$\tilde{I}_1$ and~$\tilde{I}_2$.
We have the system $\tilde{I}_i(x,y,y_x)=c_i$ and, theoretically, we can solve it w.r.t.\ $y$ and $y_x$, i.e., we can find, almost algebraically, all solutions of the projective connection associated to the geodesic flow. This is the case of superintegrable metrics, i.e.\ metrics~\eqref{eqn:supint.generators} of Appendix~\ref{app:normal.forms.dom.3}.
For uniformity of notation, we apply the changing of variables $x\leftrightarrow y$, so that the superintegrable metric~\eqref{eqn:y2.plus.x} becomes $(y+x^2)dxdy$ (and analogously for the metrics projectively equivalent to it). The projective connection associated to this projective class is
	\begin{equation}\label{eq:prj.conn.super}
	y_{xx} = 2\frac{x}{x^2+y}y_x-\frac{1}{x^2+y}y_x^2\,.
	\end{equation}
In view of Theorem~\ref{th:topalov}, metric $(y+x^2)dxdy$ admits two additional quadratic integrals coming from metrics~\eqref{eqn:y2.plus.x.2} and~\eqref{eqn:y2.plus.x.3} (provided we exchange  $x\leftrightarrow y$). This implies, in view of Corollary~\ref{cor:int.proj.conn}, that the projective connection~\eqref{eq:prj.conn.super} admits the following (rational) integrals:
\begin{equation}\label{eqn:integrals.quotient}
	\tilde{I}_1=\frac{2xy_x-(y+x^2)}{y_x}\,,\quad  \tilde{I}_2=\frac{9(y+x^2)y_x^2-4x(x^2+9y)y_x+12y(x^2 + y)}{y_x}\,,\quad y_x\neq 0\,.
\end{equation}
Also, we can solve the system $\tilde{I}_1-\tilde{c}_1=0=\tilde{I}_2-\tilde{c}_2$, $\tilde{c}_i\in\mathbb{R}$, to obtain
\begin{equation}\label{eqn:algebraic.quotient.solution}
	x^4 +4\tilde{c}_1x^3  -6x^2y -12\tilde{c}_1xy +9y^2  -2\tilde{c}_2x +12\tilde{c}_1^2y  +\tilde{c}_1\tilde{c}_2  =0\,.
\end{equation}
Thus, Equation~\eqref{eqn:algebraic.quotient.solution} yields the unparametrized geodesics or, in other words, the trajectories of the (parametrized) geodesic curves of any superintegrable metric, i.e. of any metric in the projective class given by metrics \eqref{eqn:y2.plus.x}--\eqref{eqn:y2.plus.x.3}. Any metric in this projective class is given, up to a diffeomorphic transformation, by Formula~\eqref{eqn:parametrization.dom.3}.
From the equation~$\tilde{I}_1=\tilde{c}_1$, we obtain that unparametrized geodesics are given by the ODE
\begin{equation}\label{eqn:yx.quotient.solution}
	y_x = \frac{x^2+y}{2x-\tilde{c}_1}\,.
\end{equation}
It is straightforwardly verified that any $y=y(x)$ that satisfies~\eqref{eqn:algebraic.quotient.solution} also solves~\eqref{eqn:yx.quotient.solution}, meaning that these two conditions are (differentially) dependent. Wrapping up, we have therefore reduced the integration of the system to solving the algebraic equation~\eqref{eqn:algebraic.quotient.solution}.

Let us now investigate how the proper parametrization for geodesics is obtained, given~\eqref{eqn:algebraic.quotient.solution} and~\eqref{eqn:yx.quotient.solution}, by picking a particular metric within the projective class, i.e.\ specifying the Hamiltonian. Denote the derivative with respect to the coordinate $x$ by $'$, and the derivative w.r.t.\ the proper time by a dot. The transformation between these parametrizations is given by $\dot\gamma=\dot{x}\gamma'$.
The integrals of motion thus are
\[
	H = g(\dot\gamma,\dot\gamma)\,,\qquad
	I_i = I_i(\dot\gamma,\dot\gamma)\,.
\]
We may also express the integrals~\eqref{eqn:integrals.quotient} on the quotient in these terms,
\begin{equation}\label{eqn:quotient.integral}
  \tilde{I}_i = \frac{I_i(\dot\gamma,\dot\gamma)}{g(\dot\gamma,\dot\gamma)} = \tilde{c}_i\in\mathbb{R}\,.
\end{equation}
For proper geodesics, we require the stronger condition that $H(\dot\gamma,\dot\gamma)=k$ and $I_i(\dot\gamma,\dot\gamma)=c_i$, where $k,c_i\in\mathbb{R}$.
Using~\eqref{eqn:quotient.integral} and the transformation rule $\dot\gamma=\dot{x}\gamma'$, it is easily confirmed that $\tilde{c}_i=\frac{c_i}{k}$.
Next, we have $k=g(\dot\gamma,\dot\gamma)=\dot{x}^2\,g(\gamma',\gamma')$ and thus, as we work in dimension~2,
\[
	\dot{x}^2 = \frac{k}{H(x,y,\gamma'_1=1,\gamma'_2=y_x)}\,.
\]
The right hand side of this expression depends only on known data.
	
Let us consider the simplest possible example, i.e.\ the metric $g=(x^2+y)dxdy$. We find $H=g(\dot{\gamma},\dot{\gamma})=(y+x^2)\dot{x}\dot{y}$. Using $\tilde{I}_1=\tilde{c}_1$ and~\eqref{eqn:yx.quotient.solution}, we thus obtain
\begin{equation}\label{eqn:x.dot}
	\dot{x}^2
	= \frac{k}{(x^2+y)\,y_x}
	= \frac{2x-\tilde{c}_1}{(x^2+y)^2}\,k
\end{equation}
Finally, again employing~\eqref{eqn:yx.quotient.solution},
\begin{equation}\label{eqn:y.dot}
	\dot{y}^2
	= \dot{x}^2(y_x)^2
	= k\,\frac{2x-\tilde{c}_1}{(x^2+y)^2}\,\frac{(x^2+y)^2}{(2x-\tilde{c}_1)^2}
	= \frac{k}{(2x-\tilde{c}_1)}
\end{equation}
Equations~\eqref{eqn:x.dot} and~\eqref{eqn:y.dot} constitute a system of ODEs that determine the parametrization $x(t),y(t)$, given only the Hamiltonian as additional data. However, in order to actually solve for $x(t),y(t)$, Equation~\eqref{eqn:y.dot} is not needed. Indeed, having solved~\eqref{eqn:algebraic.quotient.solution} algebraically, we may substitute this solution into~\eqref{eqn:x.dot}. The resulting ODE for $x=x(t)$ has a unique solution. Subsequently, $y=y(t)$ is determined by~\eqref{eqn:algebraic.quotient.solution}, i.e.\ the algebraic equation~\eqref{eqn:algebraic.quotient.solution} is complemented only by the ODE~\eqref{eqn:x.dot}, yielding together the full solution $x(t),y(t)$.

\section{Proof of Theorem~\ref{thm:distinguished.coordinates}}\label{sec:proof.distinguished.coordinates}
Each point in the metrizability space~$\metrSp$ gives rise to a metric and then an integrable system (using the Benenti tensor~\eqref{eqn:benenti.tensor}), see Section~\ref{sec:connections.and.benenti.tensors}.
It is therefore interesting to discuss the geometry of this space in the presence of a projective symmetry $X$. We specifically look at the case when $X$ is essential.

\subsection{Part (a)}\label{sec:proof.distinguished.coordinates.a}

Let $\dim(\solSp)=2$. We prove that the number of eigenvalues of $\lie_X$ determines the number of connected components of $\essSp$. More precisely:
The space of solutions to the metrization equations~\eqref{eqn:linear.system.2D}, restricted to solutions for which the projective symmetry is actually essential, is isomorphic to the complement of an algebraic variety in~$\mathds{R}^\mathcal{D}$ where~$\mathcal{D}$ is the degree of mobility.
The proof is indeed easy:
\begin{proof}[Proof of part~(a) of Theorem~\ref{thm:distinguished.coordinates}]
 Each real eigenvalue $\Lambda_i$ gives rise to a 1-dimensional eigenspace $E_i\subseteq\solSp$ of~$\lie_X$ in the Liouville metrization space. These eigenspaces have, by definition, vanishing intersection with the essential metrizability space, $(\bigcup_iE_i)\cap\essSp=\emptyset$.
 Any $\sigma\in\metrSp$ corresponds to a metric, and thus we can conclude $\bigcup_iE_i=\solSp\setminus\essSp$.
 Recall that we have degree of mobility~2, that each $E_i$ is 1-dimensional, and that any two eigenspaces are orthogonal (in the obvious sense). Therefore, each~$E_i$ divides~$\solSp$ into two connected components, and so forth, yielding the required statement.
 Note, in particular, that if there do not exist real eigenvalues, $\bigcup_iE_i=0$ and thus $\essSp=\metrSp=\solSp\setminus\{0\}\sim\mathbb{R}^2\setminus\{0\}$, resulting in $\essSp\sim\mathbb{S}^1\times\mathbb{R}$.
 The interpretation of the components in this product, however, differs according to the parameter~$\lambda$, see Table~\ref{tab:mobility2} and part~(b) below.
\end{proof}

\noindent Note that the metrization space is open both w.r.t.\ the Zariski topology and the usual topology inherited from~$\mathds{R}^{\mathcal{D}}$.

\subsection{Part (b)}\label{sec:proof.distinguished.coordinates.b}

We have already seen in Proposition~\ref{prop:dini} that there exist natural (Dini) coordinates $(u_1,u_2)$ on the Liouville metrization space.
%
The goal of the current section is to derive distinguished coordinates on each connected component of the essential metrizability space $\essSp\subseteq\metrSp$, in the case of degree of mobility 2. To this end we employ orbital invariants of the projective flow.\footnote{The following discussion takes up and extends material included in versions~1 and~2 of the preprint arXiv:1705.06630 of~\cite{manno_2018}, which fell victim to subsequent shortenings of this paper. In particular, Lemmas~\ref{la:distinguished.coords.1}---\ref{la:distinguished.coords.3} and their proofs are found already in version~1 of arXiv:1705.06630.}
More precisely, we have already seen that $\lie_X$ acts on $\solSp$ so we look for coordinates such that one coordinate parametrizes the orbit while the other coordinate identifies a specific orbit.
First, recall that~$\lie_X$ is a linear action on the space~$\solSp$, see Section~\ref{sec:group.actions.symmetries}.
Its Gram matrix assumes, in a suitable basis, a Jordan normal form. For a two-dimensional matrix, there are three possible real Jordan normal forms: A diagonal matrix (2 real eigenvalues), a matrix with two complex conjugate eigenvalues, and an (upper triangular) Jordan block structure. In the latter case, we can further assume that the eigenvalue is 1, see Lemma~\ref{la:normal.forms.Lw} for details.

\subsubsection{Diagonal matrix case.}\label{sec:proof.distinguished.coordinates.b.1}
In this case, the Lie derivative~$\lie_X$ takes the form of a diagonal matrix, like~(I) of~\eqref{eqn:normal.forms.Lw}.
Let us prove the following statement, which implies the required part of Theorem~\ref{thm:distinguished.coordinates}:

\begin{lemma}\label{la:distinguished.coords.1}
The pairs $(u_1,u_2)$ and $(u_1',u_2')$ are related by the projective flow if and only if they satisfy one of the following cases
\[
	  (a)\quad \frac{u_1}{u_1'}>0\quad\text{and}\quad u_2=u_2'=0
	  \qquad\qquad\qquad\qquad
	  (b)\quad \frac{u_2}{u_2'}>0\quad\text{and}\quad \frac{u_1}{|u_2|^\lambda}=\frac{u_1'}{|u_2'|^\lambda}
	  =:u\,.
\]
\end{lemma}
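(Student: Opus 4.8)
The plan is to compute the orbit of the projective flow on $\solSp$ explicitly in the basis $(\sigma_1,\sigma_2)$ and read off the invariants. Recall from equation~\eqref{eqn:pullbacks.dom2}, case (I), that $\phi^*_t(u_1\sigma_1+u_2\sigma_2) = e^{\lambda t}u_1\,\sigma_1 + e^t u_2\,\sigma_2$. Hence a point with coordinates $(u_1,u_2)$ is mapped, after time~$t$, to $(e^{\lambda t}u_1,\,e^t u_2)$. Two points $(u_1,u_2)$ and $(u_1',u_2')$ lie on the same orbit precisely when there is a real~$t$ realising this relation; the task is to translate the existence of such~$t$ into the closed conditions (a) and (b).

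First I would separate cases according to whether the second coordinate vanishes. If $u_2=0$, then the orbit stays in the line $\{u_2=0\}$, and $e^{\lambda t}u_1$ runs over all reals with the same sign as $u_1$ (note $e^{\lambda t}>0$), which is exactly condition (a); conversely if $u_2'=0$ is forced because $e^t u_2 = u_2'$ with $u_2=0$. If $u_2\neq 0$, then necessarily $u_2'\neq0$ and $u_2'/u_2 = e^t>0$, so $t=\ln(u_2'/u_2)$ is uniquely determined and must be real, giving $u_2/u_2'>0$. Substituting this~$t$ into the first coordinate gives $u_1' = e^{\lambda t}u_1 = (u_2'/u_2)^\lambda u_1$ — here one must be slightly careful with signs and the use of absolute values, since $\lambda$ need not be an integer, which is why the invariant is written as $u_1/|u_2|^\lambda$. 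Because $u_2/u_2'>0$ we have $(u_2'/u_2)^\lambda = |u_2'|^\lambda/|u_2|^\lambda$, and the relation $u_1' = u_1\,|u_2'|^\lambda/|u_2|^\lambda$ rearranges to $u_1/|u_2|^\lambda = u_1'/|u_2'|^\lambda$, which is condition (b). Conversely, given (b), one recovers a valid real~$t=\ln(u_2'/u_2)$ and checks it transports $(u_1,u_2)$ to $(u_1',u_2')$.

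I would then remark that this identifies $u := u_1/|u_2|^\lambda$ (on the region $u_2\neq0$) as a complete invariant of the orbit, while $s := \ln(u_1)$ (equivalently a logarithm of the scaling factor) parametrises the orbit itself — consistent with the entries in Table~\ref{tab:mobility2}. One should also note that the eigenspaces $\{u_2=0\}$ and $\{u_1=0\}$ are exactly the loci excluded from $\essSp$ (the projective symmetry being homothetic there), so on each of the four open quadrants cut out by these two lines the coordinates $(s,u)$ are genuine diffeomorphisms onto $\mathbb{R}^2$, recovering the $2^r=4$ components of part~(a).

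The main obstacle is purely bookkeeping rather than conceptual: handling the non-integer exponent~$\lambda$ correctly when extracting $t$ from the sign data, i.e.\ justifying $(u_2'/u_2)^\lambda = |u_2'|^\lambda/|u_2|^\lambda$ under the hypothesis $u_2/u_2'>0$, and making sure the degenerate sub-case $u_2=0$ (where the flow cannot change the sign of $u_1$ but also cannot reach $u_1=0$) is not silently conflated with the generic case. Once the sign conventions are pinned down, both implications are immediate from the explicit form of $\phi^*_t$.
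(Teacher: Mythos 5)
Your proposal is correct and follows essentially the same route as the paper: use the explicit pullback $\phi_t^*(u_1\sigma_1+u_2\sigma_2)=e^{\lambda t}u_1\sigma_1+e^tu_2\sigma_2$, read off $u_2/u_2'>0$, eliminate $t$ (with absolute values to handle the non-integer exponent $\lambda$) to get the invariant $u_1/|u_2|^\lambda$, and verify the converse directly. Your treatment is in fact slightly more explicit than the paper's, which dismisses the $u_2=0$ case as trivial and the converse as a straightforward computation.
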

\begin{proof}
In order to prove this lemma, let us assume $u_1', u_1, u_2', u_2\ne0$ (otherwise the statement is trivial). Now, let us assume that $u_1'=e^{\lambda t}u_1$ and $u_2'=e^t u_2$ holds, that in particular implies $\frac{u_2}{u_2'}>0$. Exponentiating the absolute values in the second equation by $\lambda\ne0$ and then eliminating $t$ we obtain
\begin{equation*}\label{eqn:proof.projcurve_II.2}
	  u_1'\,|u_2|^\lambda = u_1\,|u_2'|^\lambda\,,
\end{equation*}
i.e. the case $(b)$ above.
Conversely, if we assume either $(a)$ or $(b)$, we arrive, by a straightforward computation, to the required condition.
\end{proof}

\noindent The graphs in Table~\ref{tab:mobility2} illustrate the orbits of the projective flow inside $\essSp\subset\solSp$.
In conclusion, we arrive at the following: Along a given orbit of the projective flow, the function~\smash{$u = \frac{u_1}{|u_2|^\lambda}$} is constant. Thus we can use this number (up to its sign) as a coordinate on the respective connected component of $\essSp\subset\metrSp$. Note, as well, that the signs of $u_1$ and $u_2$ identify the quadrant of $\mathbb{R}^2\sim\solSp$ and thus indicate the respective connected component. Altogether, we obtain the transformation
\[
  u_1 = u\,e^{\lambda s}\,,\quad
  u_2 = e^s\,.
\]

\begin{remark}\label{rmk:invariants.diagonal.case}
	Note that the same strategy works in any degree of mobility.
	For instance, let $(\sigma_1,\sigma_2,\dots,\sigma_m)$ be a basis of $\solSp$
	such that $\lie_X\sigma_i = \lambda_i\sigma_i$.
	Let us denote the flow of~$X$ by $\phi_t$. We have
	\begin{equation*}
	  \phi_t^*\left(\sum u_{i}\sigma_i\right)
	  = \sum u_{i}e^{\lambda_i t}\,\sigma_i =: \sum u_{i}'\sigma_i\,,
	\end{equation*}
	and thus $u_{i}'=u_{i}e^{\lambda_i t}$. Requiring $\lambda_i\ne0$ (existence of a non-Killing basis) and $u_{i}\ne0$, it is then quickly confirmed that
	\begin{equation*}
	\left|\frac{u_{i}'}{u_{i}}\right|^{\lambda_j} = \left|\frac{u_{j}'}{u_{j}}\right|^{\lambda_i}\,,
	\quad\text{and therefore we have $\frac{m(m-1)}{2}$ functions}\quad
	F_{ij} = \frac{\left|u_{i}\right|^{\lambda_j}}{\left|u_j\right|^{\lambda_i}}\quad
	\text{for $i<j$}\,,
	\end{equation*}
	which are constant along orbits. However, not all of them are independent.
\end{remark}

\paragraph{Alternative (angular) invariants.}
We could also use more intuitive functions that are constant along orbits of the projective symmetry.
Indeed, take the diagonal matrix case and consider the pullback
\[
\phi_t^*\left(\sum u_{i}\sigma_i\right)
= \sum e^{\lambda_it}u_{i}\sigma_i
= \sum u_{i}'\sigma_i
\]
After a change to ellipsoidal coordinates,
$(u_1,u_2) = (\,e^{-\lambda_1r}\,\cos(\varphi), e^{-\lambda_2r}\,\sin(\varphi)\,)$,
\begin{align*}
\phi_t^*\left(\sum u_{i}\sigma_i\right)
&= e^{\lambda_1(t-r)}\cos(\varphi)\,\sigma_1 + e^{\lambda_2(t-r)}\sin(\varphi)\,\sigma_2 \\
&= e^{\lambda_1(t')}\cos(\varphi)\,\sigma_1 + e^{\lambda_2(t')}\sin(\varphi)\,\sigma_2
\stackrel{t=r}{=} \left( \cos(\varphi)\,\sigma_1 +\sin(\varphi)\,\sigma_2 \right)
\end{align*}
and thus we have established $\varphi$ as an orbital invariant, while $t\to t'=t-r$.
We will come back to this idea in Section~\ref{sec:proof.superintegrable}.

\subsubsection{Upper triangular Jordan block case.}\label{sec:proof.distinguished.coordinates.b.2}
Let us now assume that~$\lie_X$ takes the form~(II) of~\eqref{eqn:normal.forms.Lw}.
Exponentiating, we find
\begin{equation}
 \sum u_{i}'\sigma_i
 =\phi_t^*\left(\sum u_{i}\sigma_i\right)
 = \begin{pmatrix}
    e^t & te^t \\
        & e^t
  \end{pmatrix}\,\begin{pmatrix} \sigma_1\\ \sigma_2\end{pmatrix}
\end{equation}
which implies, in particular, $u_2'=e^tu_2$ and $u_1'=e^tu_1+te^tu_2$, and thus
\[
 u_1' = \frac{u_2'}{u_2}\,u_1+\ln\left(\frac{u_2'}{u_2}\right)\,u_1'
\]
As a consequence,
\begin{equation*}
 \frac{e^{\nicefrac{u_1'}{u_2'}}}{u_2'}
 =\frac{e^{\nicefrac{u_1}{u_2}}}{u_2}\,.
\end{equation*}
This implies that the function~$F_1(u_1,u_2)=\frac{e^{\nicefrac{u_1}{u_2}}}{u_2}$ is constant along orbits of the projective symmetry.

\begin{lemma}\label{la:distinguished.coords.2}
	The pairs $(u_1,u_2)$ and $(u_1',u_2')$ are related by the projective flow if and only if they satisfy one of the following cases
	\[
	   (a)\quad u_1'=u_1=0 \quad\text{and}\quad \frac{u_2'}{u_2}>0
	   \qquad\qquad\qquad
		(b)\quad \frac{u_1'}{u_1}>0 \quad\text{and}\quad
		 \frac{1}{u_1'}e^\frac{u_2'}{u_1'}=\frac{1}{u_1}e^\frac{u_2}{u_1}
		 =:u\,.
	\]	
\end{lemma}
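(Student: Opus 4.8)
The plan is to mirror, essentially verbatim, the two-part template used for Lemma~\ref{la:distinguished.coords.1}, now driven by the pullback rule~(II) of~\eqref{eqn:pullbacks.dom2}, namely $u_1'=e^tu_1$ and $u_2'=te^tu_1+e^tu_2$ (with the eigenvalue of $\lie_X$ normalized to $1$ via Lemma~\ref{la:normal.forms.Lw}). First I would treat the degenerate case $u_1=0$ separately: then $u_1'=e^tu_1=0$ and $u_2'=e^tu_2$, so necessarily $u_2'/u_2=e^t>0$; conversely, given $u_1=u_1'=0$ and $u_2'/u_2>0$, one sets $t:=\ln(u_2'/u_2)$ and reads off directly that the pullback rule holds. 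This is case~(a), and it is immediate.

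For the generic situation I would assume $u_1,u_1'\neq0$. From $u_1'=e^tu_1$ one gets $u_1'/u_1=e^t>0$, the first clause of~(b). To produce the orbital invariant I would compute $u_2'/u_1'=t+u_2/u_1$ and $1/u_1'=e^{-t}/u_1$, so that in the product $\frac{1}{u_1'}e^{u_2'/u_1'}$ the factor $e^t$ cancels against $e^{-t}$, leaving $\frac{1}{u_1'}e^{u_2'/u_1'}=\frac{1}{u_1}e^{u_2/u_1}=:u$, the second clause of~(b). For the converse, assuming~(b), I would set $t:=\ln(u_1'/u_1)$, which is a well-defined real number precisely because $u_1'/u_1>0$; then $u_1'=e^tu_1$ by construction, and substituting this into the invariance relation forces $u_2'/u_1'=t+u_2/u_1$, hence $u_2'=te^tu_1+e^tu_2$, i.e.\ $(u_1',u_2')=\phi_t^*(u_1,u_2)$.

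The only point requiring care — the same one as in Lemma~\ref{la:distinguished.coords.1} — is the sign bookkeeping: one must check that the positivity conditions $u_1'/u_1>0$ (resp.\ $u_2'/u_2>0$ in case~(a)) are exactly what is needed in order to solve for a \emph{real} $t$, so that the equivalence is sharp and not merely a one-sided implication; the discrepancy between $|u_1|$ and $u_1$ in the definition of $u$ in Table~\ref{tab:mobility2} is absorbed here as well. Everything else is a direct substitution. Once the lemma is proved, the function $u=\frac{1}{|u_1|}e^{u_2/u_1}$ is constant along orbits and, together with the orbit parameter $s=\ln(u_1)$, furnishes the distinguished coordinates listed in column~(II) of Table~\ref{tab:mobility2}, which is precisely the claim of Theorem~\ref{thm:distinguished.coordinates}(b) in the Jordan block case.
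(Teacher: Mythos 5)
Your proposal is correct and follows essentially the same route as the paper: write out the case-(II) pullback rule $u_1'=e^tu_1$, $u_2'=te^tu_1+e^tu_2$, split off the degenerate case $u_1=0$, and eliminate $t$ to obtain the orbital invariant, with the converse being a direct substitution using $t=\ln(u_1'/u_1)$ (resp.\ $t=\ln(u_2'/u_2)$). The paper's own proof is just a terser version of the same elimination, stating the relation as $u_2'=u_1\frac{u_1'}{u_1}\ln\bigl(\frac{u_1'}{u_1}\bigr)+u_2\frac{u_1'}{u_1}$, which is equivalent to your invariant form.
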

\begin{proof}
	We have $u_1'=u_1\,e^t$ and $u_2'=u_1\,te^t+u_2\,e^t$. Eliminating $t$ we therefore have either $u_1'=u_1=0$ and then $\frac{u_2'}{u_2}>0$, or $\frac{u_1'}{u_1}>0$ and
	\[
	  u_2'
	  =u_1\,\frac{u_1'}{u_1} \ln\left(\frac{u_1'}{u_1}\right)
	    +u_2\,\frac{u_1'}{u_1}\,.
	\]
	The converse direction is straightforward.
\end{proof}

\noindent We have thus obtained, letting $u_1=e^s$, that $u_2=u_1\,\ln(u_1u)=e^s\ln(e^su)=e^s\,(s+\ln(u))$.
The plots in Table~\ref{tab:mobility2} illustrates the orbits of a metric in the projective class $\projcl(\g1)=\projcl(\g2)$ under the flow of $\phi_t$.

\begin{remark}
The above reasoning can in fact be extended to higher degree of mobility. Assume, for instance,
\[
  \lie_X\sigma_m = \sigma_m\,,
  \qquad\text{and}\qquad
  \lie_X\sigma_i=\sigma_i+\sigma_{i+1}
  \quad\text{for $1\leq i\leq m-1$}\,.
\]
As a consequence, we obtain
\begin{equation*}
 \sum u_{i}'\sigma_i
 =\phi_t^*(\sum u_{i}\sigma_i)
 = \begin{pmatrix}
    e^t & te^t & \cdots & \frac{e^tt^{m-1}}{(m-1)!} \\
      & e^t & \dots & \vdots \\
      & & \ddots & te^t \\
      & & & e^t
  \end{pmatrix}\,\begin{pmatrix} \sigma_1\\ \sigma_2\\ \vdots\\ \sigma_{m}\end{pmatrix}\,,
\end{equation*}
implying in particular that
$
 u_m'=e^tu_m
 \quad\text{and}\quad
 u_{m-1}'=e^tu_{m-1}+te^tu_m
$.
Thus, assuming $u_2,\dots,u_m\ne0$ (non-homotheticness),
\[
 u_{m-1}' = \frac{u_m'}{u_m}\,u_{m-1}+\ln\left(\frac{u_m'}{u_m}\right)\,u_1'
\]
from which we infer
$\frac{e^{\nicefrac{u_{m-1}'}{u_m'}}}{u_m'}=\frac{e^{\nicefrac{u_{m-1}}{u_m}}}{u_m}$,
i.e.\ the function~$\frac{e^{u_{m-1}/u_m}}{u_m}$ is constant along orbits of the projective symmetry.
Similarly, the other equations yield further invariants, e.g., using
$t=\frac{u_{m-1}'}{u_m'}-\frac{u_{m-1}}{u_m}$
we derive from
\[
 \frac{u_{m-2}'}{u_m'}
 = \frac{u_{m-2}}{u_m} + t\,\frac{u_{m-1}}{u_m} + \frac12\,t^2
 = \frac{u_{m-2}}{u_m} + t\,\left(\frac{u_{m-1}}{u_m}+\frac{t}{2}\right)
\]
the function~$\frac{u_{m-2}}{u_m}-\frac{u_{m-1}^2}{u_m^2}$, which is constant along orbits of the projective symmetry, and so forth.
\end{remark}

\paragraph{Other invariants: higher multiplicity eigenvalues.}
We have, recalling case~(II) of Section~\ref{sec:group.actions.symmetries},
\[
\phi_t^* \left( \sum_1^2 u_{i}\sigma_i \right)
= u_1 e^t \sigma_1 + u_1 t e^t \sigma_2 + u_2 e^t \sigma_2
= u_1' \sigma_1 + u_2' \sigma_2\,,
\]
which implies new coordinates $(r,\kappa)$ with
$(u_1,u_2)=(e^{r},re^r\,\kappa)$,
i.e.\ $\kappa:=\frac{u_2}{u_1}-\ln(u_1)=\ln(F_1)$ where $F_1=F_1(u_1,u_2)$ is as defined in Section~\ref{sec:proof.distinguished.coordinates.b.2}.
This means
\[
\phi_t^* \left( \sum_1^2 u_{i}\sigma_i \right)
= e^{t+r}\,\sigma_1 + e^{t+r}\,(t+r+\kappa)\,\sigma_2\,,
\]
i.e.\ $\kappa$ is the invariant parameter and $r\to r'=r+t$ parametrizes the projective orbits.

\subsubsection{Pair of complex eigenvalues.}\label{sec:proof.distinguished.coordinates.b.3}
This is the case when $\lie_X$ has the form~(III), i.e.
\begin{equation}\label{eq:III.expLw}
\exp(t\lie_X)=e^{\lambda t}\,\begin{pmatrix} \cos(t) & -\sin(t)\\ \sin(t) & \quad\cos(t) \end{pmatrix}.
\end{equation}
This case has already been discussed in~\cite{manno_2018}, but we restate it here for completeness.

Consider a general metric $g$, obtained from $u_1\sigma_1+u_2\sigma_2$ via Formula~\eqref{eqn:sigma.g}, realizing the projective class $\projcl(\g1)=\projcl(\g2)$.
By virtue of Equation~\eqref{eq:III.expLw}, we can compute
\begin{equation}\label{eq:III.pullback}
\phi_t^*(u_1\sigma_1+u_2\sigma_2)=e^{\lambda t}\ \left( u_1\cos(t)\sigma_1-u_1\sin(t)\sigma_2+u_2\sin(t)\sigma_1+u_2\cos(t)\sigma_2 \right).
\end{equation}
Thus, the orbits of the projective flow describe logarithmically spiralling curves in \solSp.
We obtain
\[
 u_1' = \left( u_1\cos(t)+u_2\sin(t) \right)\,e^{\lambda t}\,,\quad
 u_2' = \left( u_2\cos(t)-u_1\sin(t) \right)\,e^{\lambda t}\,,
\]
and thus
\begin{equation}\label{eqn:pre.invariant.2cpx}
 (u_1')^2+(u_2')^2
 = e^{2\lambda\,t}\,\left( u_1^2+u_2^2 \right)\,.
\end{equation}
Therefore, if~$\lambda=0$, $u_1^2+u_2^2$ is constant along orbits of the projective symmetry.
If, in contrast, $\lambda\ne0$, consider
\[
 \frac{u_1'}{u_2'} = \frac{u_1+u_2\tan(t)}{u_2-u_1\tan(t)}\,,
 \quad\text{which becomes}\quad
 \tan(t)=\tan(s'-s)\,,
\]
where we introduce $s = \arctan\left(\frac{u_1}{u_2}\right)$.
Thus we find $t = s'-s+N\pi$, $N\in\mathbb{Z}$,
and therefore Equation~\eqref{eqn:pre.invariant.2cpx} becomes
$((u_1')^2+(u_2')^2)\,e^{-2\lambda s'}
 = e^{-2\lambda s}\,\left( u_1^2+u_2^2 \right)\ e^{2N\lambda\pi}$.
We conclude that the function
\begin{equation*}
 \left[ \frac{\ln(u_1^2+u_2^2)}{2\lambda}-s \right]\!\!\mod\,\pi
\end{equation*}
is constant along the orbits of the projective symmetry.
Note that this implies new, spiralling parameters $(s,u)$ in $\solSp$, namely
\begin{equation}\label{eqn:CIII.reparametrization}
 u_1=u e^{\lambda s} \sin(s)
 \qquad\text{and}\qquad
 u_2=u e^{\lambda s} \cos(s)
\end{equation}
with the inverse reparametrization given by
$s=\arctan\left(\frac{u_1}{u_2}\right)$
and
$u=\sqrt{u_1^2+u_2^2}\,e^{-\lambda\arctan\left(\nicefrac{u_1}{u_2}\right)}$.

\begin{remark}
Note that in case $\lambda=0$, the new parametrization is by polar parameters, with $u>0$ being the radius parameter and $s\in\angl$ being the angle parameter.
On the other hand, if $\lambda\ne0$, it is convenient to take $s\in\mathbb{R}$ as parameter along the spiralling curves in $\solSp$, whereas $u$ should be considered as an angle by way of letting $u=e^{\lambda\alpha}$ where $\alpha\in\angl$.
\end{remark}
\noindent The parametrization~\eqref{eqn:CIII.reparametrization} is adjusted to the problem in the sense that $K$ is constant along orbits of the projective flow.

\begin{lemma}\label{la:distinguished.coords.3}
The pairs $(s,u)$ and $(s',u')$ are related by the transformation~\eqref{eq:III.pullback} if and only if they satisfy $u' = u$.
\end{lemma}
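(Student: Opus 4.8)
The plan is to push the pullback formula \eqref{eq:III.pullback} through the reparametrization \eqref{eqn:CIII.reparametrization} and show that, in the coordinates $(s,u)$, the projective flow acts as a pure translation $s\mapsto s+t$ with $u$ fixed. First I would read off from \eqref{eq:III.pullback} the transformation of the coefficients, namely $u_1'=e^{\lambda t}(u_1\cos t+u_2\sin t)$ and $u_2'=e^{\lambda t}(u_2\cos t-u_1\sin t)$, then substitute $u_1=u\,e^{\lambda s}\sin s$ and $u_2=u\,e^{\lambda s}\cos s$ and collapse the result using the sine/cosine addition formulas. This yields $u_1'=u\,e^{\lambda(s+t)}\sin(s+t)$ and $u_2'=u\,e^{\lambda(s+t)}\cos(s+t)$; matching these against \eqref{eqn:CIII.reparametrization} one reads off $s'=s+t$ and $u'=u$. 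That settles the forward direction, and it also re-derives the orbital invariant found just above the lemma, now without the $N\pi$ ambiguity produced by the $\arctan$-based computation.

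For the converse I would argue directly: if $u'=u$, then applying $\phi_t^*$ with $t:=s'-s$ carries $(s,u)$ to $(s+t,u)=(s',u')$ by the computation just performed, so the two pairs are indeed related by \eqref{eq:III.pullback}. Combining the two directions yields the claimed equivalence.

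The one point to be careful about is that $(s,u)$ genuinely be a coordinate, i.e.\ that a point of \solSp\ determine the pair $(s,u)$ uniquely; this is exactly what the conventions stated just before the lemma ensure. When $\lambda=0$ one uses polar parameters $u=\sqrt{u_1^2+u_2^2}>0$, $s\in\angl$, and reads $s'$ modulo $2\pi$; when $\lambda\neq0$ one takes $s\in\R$ and $u=e^{\lambda\alpha}$ with $\alpha\in\angl$, which again makes $(s,u)\mapsto(u_1,u_2)$ a bijection. Once this is in place, $t=s'-s$ is well defined and the argument closes. I do not expect a substantive obstacle here: the whole purpose of the parametrization \eqref{eqn:CIII.reparametrization} was to turn the projective flow into a shift in $s$ alone, so the lemma reduces to bookkeeping once the bijectivity conventions are pinned down.
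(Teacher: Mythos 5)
Your proposal is correct and follows essentially the same route as the paper: substitute the parametrization \eqref{eqn:CIII.reparametrization} into the pullback \eqref{eq:III.pullback}, apply the sine/cosine addition formulas, and read off $u'=u$, $s'=s+t$ (modulo $2\pi$ when $\lambda=0$), with the converse obtained by taking $t=s'-s$. Your extra remarks on the bijectivity conventions for $(s,u)$ just make explicit what the paper's surrounding remark already stipulates, so there is no gap.
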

\begin{proof}
	Consider~\eqref{eq:III.pullback}, i.e.
	\begin{multline*}
	  u e^{\lambda (t+s)} \left(
	    (\sin(s)\cos(t)+\cos(s)\sin(t))\,\sigma_1
	    +(-\sin(s)\sin(t)+\cos(s)\cos(t))\,\sigma_2
	  \right) \\
	  =
	  u' e^{\lambda s'} \sin(s')\sigma_1
	  +u' e^{\lambda s'} \cos(s')\sigma_2 \,.
	\end{multline*}
	Using standard trigonometrical identities, one finds $u'=u$ and $s'=s+t$
	(in case $\lambda=0$ we understand the second equality modulo $2\pi$).
\end{proof}

\noindent The results of Sections~\ref{sec:proof.distinguished.coordinates.b.1}---\ref{sec:proof.distinguished.coordinates.b.3}, particularly Lemmas~\ref{la:distinguished.coords.1}---\ref{la:distinguished.coords.3}, prove part~(b) of Theorem~\ref{thm:distinguished.coordinates}.
One can make use of these distinguished coordinates to find normal forms for metrics with one essential normal forms. This procedure has indeed already been executed, for the case of Section~\ref{sec:proof.distinguished.coordinates.b.3}, in~\cite{manno_2018}.

\section{Proof of Theorem~\ref{thm:superintegrable.systems}}\label{sec:proof.superintegrable}
Let $g$ be a metric with one, essential projective symmetry and let its degree of mobility be~3. Let $g$ correspond to the solution $\sigma\in\solSp$ via~\eqref{eqn:sigma.g}.
Parametrize the space~$\solSp$ of solutions to~\eqref{eqn:linear.system.2D} using the generators~\eqref{eqn:supint.generators} and~\eqref{eqn:sigma.g}.
Now consider the pullback of $\sigma=\sum u_i\sigma_i$ along the flow~$\phi_t$ of the projective symmetry~$X$, as in~\eqref{eqn:pullbacks.dom2}, i.e.
\begin{equation}\label{eqn:proj.orbit.supint}
\phi_t^*\sigma
= \phi_t^*\left(\sum u_{i}\sigma_i\right)
= e^{-\frac53t}u_1\sigma_1+e^{-\frac23t}u_2\sigma_2+e^{\frac43t}u_3\sigma_3\,.
\end{equation}
Reparametrizing the essential metrizability space in terms of ellipsoidal coordinates,
\begin{equation}\label{eqn:parametrization.dom.3}
\begin{pmatrix} u_1 \\ u_2 \\ u_3 \end{pmatrix}
= \begin{pmatrix}
e^{-\frac53r}\,\sin(\theta)\,\cos(\varphi) \\
e^{-\frac23r}\,\sin(\theta)\,\sin(\varphi) \\
e^{\frac43r}\,\cos(\theta)
\end{pmatrix}\,,
\end{equation}
where $\theta\in(0,\pi)$ and $\varphi\in(0,2\pi)$ with $\varphi\not\in\{\tfrac12\pi,\pi,\tfrac32\pi\}$,
we achieve representatives on the unit sphere, by letting $t=-r$.
Any two such representatives are mutually non-isometric, i.e.\ not linkable by a local coordinate transformation~\cite{manno_2018}.

\begin{remark}[Orbital invariants]
In Remark~\ref{rmk:invariants.diagonal.case}, we have found orbital invariants \smash{$F_{ij}=\frac{|u_{i}|^{\lambda_j}}{|u_j|^{\lambda_i}}$}, $1\leq i<j\leq 3$. These invariants exist, analogously, for the case considered here. However, we actually do not use these invariants in our reasoning of this current section. Indeed, such a choice of invariants is less geometrically instructive as the one actually pursued, which the reader might easily verify by computation.
Having said this, it should also be stated that the parametrization~\eqref{eqn:parametrization.dom.3} indeed reveals better invariants, namely~$\theta$ and~$\varphi$. These invariants have been discussed at the end of Section~\ref{sec:proof.distinguished.coordinates.b.1} and are better suited (for the current purposes) than the $F_{ij}$ in at least two respects: Firstly, they are geometrically much more instructive as they can be identified with angles on the 2-sphere. Secondly, they are ``economical'' and indeed it is quickly verified that the invariants $F_{ij}$ cannot all be independent.
\end{remark}

\noindent So far, we have achieved that the superintegrable systems that admit exactly one, essential projective vector field, are parametrized by points on the unit sphere.\smallskip

\noindent An explicit description of these superintegrable systems is obtained as follows:
\begin{proof}[Proof of Theorem~\ref{thm:superintegrable.systems}]
It is easily verified that $\sigma$ and $\bar{\sigma}$ are orthogonal in $\solSp$, using the standard scalar product of $\solSp\sim\R^3$ w.r.t.\ the basis $(\sigma_1,\sigma_2,\sigma_3)$.
Thus, the third Liouville tensor is obtained from their cross product, $\hat{\sigma}=\sigma\times\bar{\sigma}$. This defines three linearly independent integrals of motion, the first of which is the Hamiltonian of~$g=g[\theta,\varphi]$.
These integrals $(H,I,J)$ are given via~\eqref{eqn:integral.g},
\[
  H = \frac12\,g^{ij}p_ip_j\,,\quad
  I = \left(\frac{\det(g)}{\det(\bar{g})}\right)^{\frac23}\,\bar{g}^{ij}p_ip_j\,,\quad
  J = \left(\frac{\det(g)}{\det(\hat{g})}\right)^{\frac23}\,\hat{g}^{ij}p_ip_j\,,
\]
where $g,\bar{g},\hat{g}$ correspond to $\sigma,\bar{\sigma},\hat{\sigma}$, respectively via Formula~\eqref{eqn:sigma.g}. Raising and lowering indices is done by using the metric $g$.
We have to verify that these integrals are functionally independent, i.e.\ that the matrix
\begin{equation}\label{eqn:jacobian.matrix}
  A[g,\bar{g},\hat{g}] =
      \begin{psmallmatrix}
       H_x & I_x & J_x \\
       H_y & I_y & J_y \\
       H_{p_x} & I_{p_x} & J_{p_x} \\
       H_{p_y} & I_{p_y} & J_{p_y}
      \end{psmallmatrix}
\end{equation}
has maximal rank, i.e.\ rank~3.
In fact, it is enough to check this for the integrals $(H,I,J)$ obtained from $(\g1,\g2,\g3)$ of~\eqref{eqn:supint.generators}, because the isomorphism among the superintegrable systems does not affect the momenta derivatives, c.f.\ \cite{bryant_2008}. From this latter fact, one can deduce that, if \eqref{eqn:jacobian.matrix} had rank less than~3, for $g\ne\g1$, the following equation would necessarily have to hold:
\[
  (\gamma_x-\gamma_y)\,\det(A[\g1,\g2,\g3]) = 0\,,
\]
where $\gamma=\frac{\det(g)}{\det(\g1)}$.
It is straightforward to show that $\gamma_x\ne\gamma_y$ for $g\ne\g1$.
Thus, it remains to prove $\det(A[\g1,\g2,\g3])\ne0$, generically. We take $(H,I,J)$ for $(g,\bar{g},\hat{g})=(\g1,\g2,\g3)$ from \eqref{eqn:supint.generators}, and we consider the $3\times3$ submatrix $S$ of $A[\g1,\g2,\g3]$,
\[
  S =
  \begin{psmallmatrix}
  H_y & I_y & J_y \\
  H_{p_x} & I_{p_x} & J_{p_x} \\
  H_{p_y} & I_{p_y} & J_{p_y}
  \end{psmallmatrix}
\]
of \eqref{eqn:jacobian.matrix}.
Assume $\rk(A)<3$, then $\det(S)=0$.
But $\det(S)$ is, after multiplying through with the common denominator, a polynomial in $x,y,p_x,p_y$. It is easily confirmed that it cannot be zero for generic values of these variables.
\end{proof}

\begin{remark}
What about the remaining six points, i.e.\ points on the 2-sphere that correspond, via~\eqref{eqn:sigma.g}, to metrics with a homothetic (as opposed to essential) projective vector field?
Indeed, Equations~\eqref{eqn:new.superintegrable.basis} apply also for these metrics, as the following table demonstrates.
\begin{center}
\begin{tabular}{l|lll}
 \toprule
 \textbf{Point} & \multicolumn{3}{l}{\textbf{Basis $(\sigma,\bar\sigma,\hat\sigma)$}} \\
 \midrule
 $\theta=\frac{\pi}{2}, \varphi=0$ &
 $\sigma[\nicefrac{\pi}{2},0] = \sigma_1$, &
 $\bar{\sigma}[\nicefrac{\pi}{2},0] = \sigma_3$, &
 $\hat{\sigma}[\nicefrac{\pi}{2},0] = \sigma_2$
 \\
 $\theta=\frac{\pi}{2}, \varphi=\frac{\pi}{2}$ &
 $\sigma[\nicefrac{\pi}{2},\nicefrac{\pi}{2}] = \sigma_2$, &
 $\bar{\sigma}[\nicefrac{\pi}{2},\nicefrac{\pi}{2}] = -\sigma_3$, &
 $\hat{\sigma}[\nicefrac{\pi}{2},\nicefrac{\pi}{2}] = -\sigma_1$
 \\
 $\theta=0$ &
 $\sigma[0,\varphi] = \sigma_3$, &
 $\bar{\sigma}[0,\varphi]=\cos(\varphi)\,\sigma_1+\sin(\varphi)\,\sigma_2$, &
 $\hat{\sigma}[0,\varphi] = -\sin(\varphi)\,\sigma_1+\cos(\varphi)\,\sigma_2$ \\
 \bottomrule
\end{tabular}
\end{center}
In the last case, we are free to choose the parameter $\varphi$, say $\varphi=0$ for convenience. We thus obtain the simpler form $\bar{\sigma}[0,0] = \sigma_1$, $\hat{\sigma}[0,0] = \sigma_2$.
The other three exceptional points can be checked analogously and we therefore confirmed that the projective class of metrics III(C) of~\cite{manno_2018} is indeed parametrized by the unit sphere, including these six points where the essential projective symmetry degenerates into a homothetic one.
\end{remark}

\noindent We have thus confirmed: Metrics with one, essential projective vector field and degree of mobility~3 are superintegrable in the usual sense and their projective class is parametrized by the 2-sphere. All points on this sphere correspond to such superintegrable systems, except for six points. The six exceptional points are given by the intersection of the sphere with the axes in Dini coordinates, i.e.\ eigenspaces of the Lie derivative w.r.t.\ the projective vector field. They correspond to metrics for which the projective symmetry degenerates into a homothetic one.

The present paper considers free Hamiltonian systems given by the (super-)integrable metrics with one, essential projective vector field. A subsequent paper along these lines is~\cite{vollmer_2018}, which investigates the potentials admitted by the superintegrable systems, i.e.\ when the Hamiltonian is $H=\frac12\,g^{ij}p_ip_j+V$, admitting a potential function~$V:T^*M\to\mathbb{R}$.

\appendix
\section{Metrics with one, essential projective symmetry: Normal forms for degree of mobility 2}\label{app:normal.forms.dom.2}

Let $g$ be a (pseudo-)Riemannian metric on a $2$-dimensional manifold $M$ of degree of mobility~2. Let us assume that $\dim(\projalg(g))=1$ in any neighborhood of $M$ and that $\projalg(g)$ is generated by an essential projective vector field. Then, in a neighborhood of almost every point there exists a local coordinate system $(x,y)$ such that~$g$ assumes one of the mutually non-diffeomorphic normal forms given in Table~\ref{tab:normal.forms.dom.2} (organized according to their type, see Proposition~\ref{prop:dini}).
Table~\ref{tab:normal.forms.dom.2} contains some ``special cases'' for the choice of the parameter values. For the origin of these exceptions, see~\cite{manno_2018}. Roughly speaking, they reveal additional symmetries of the geometries, which manifest themselves as a freedom to ``swap'' the variables on the base manifold.

\afterpage{%
\begin{table}[!ht]
\begin{center}
\textbf{Normal forms for metrics with one, essential projective vector field}
\smallskip

\begin{tabular}{cg{2.9cm}v{8.5cm}g{2.5cm}}
 \toprule
 \textbf{Label} & \textbf{Parameters} & \textbf{Normal form} & \textbf{Coordinates} \\
 \midrule
 (A.1)
 & $\varepsilon,h,\xi$
 & \begin{minipage}{8.5cm}
    \vskip 0.6em
    $g=\kappa\left( \frac{(e^{\xi x}-he^{\xi y})\,e^{2x}}{(1+\varrho he^{\xi y})(1+\varrho e^{\xi x})^2}\,dx^2 +\varepsilon\frac{(e^{\xi x}-he^{\xi y})\,e^{2y}}{(1+\varrho he^{\xi y})^2(1+\varrho e^{\xi x})}\,dy^2 \right)$
   \end{minipage}
 & $\kappa\in\Rnz$, $\varrho\in\pmo$ \\
 \midrule
 (A.2)
 & $h$
 & $g=\kappa\left(\frac{(y-x)e^{-3x}}{x^2y}dx^2 + \frac{h (y-x)e^{-3y}}{xy^2}dy^2\right)$
 & $\kappa\in\Rnz$ \\
 \midrule
 (A.3a)
 & $|h|\leq e^{-3\lambda\pi}$,\hfill $\lambda>0$
 & $g=\frac{\sin(y-x)}{\sin(y+\theta)\,\sin(x+\theta)}\,
		\left(
		\frac{e^{-3\lambda\,x}}{\sin(x+\theta)}\,dx^2
		+\frac{h\,e^{-3\lambda\,y}}{\sin(y+\theta)}\,dy^2
		\right)$
 & $\theta\in\angl$ \\
 (A.3b)
 & $h$,\hfill $\lambda=0$
 & $g=\kappa\,\frac{\sin(y-x)}{\sin(y)\,\sin(x)}\,
		\left(
		\frac{dx^2}{\sin(x)}
		+\frac{h\,dy^2}{\sin(y)}
		\right)$
 & $\kappa>0$ \\
 \midrule
 (B.4)
 & $C,\xi$
 & $g=\kappa\,\left(
	 \frac{Cz^\xi-\cc{C}\cc{z}^\xi}{(1+\cc{C}\cc{z}^\xi)(1+Cz^\xi)^2}\,dz^2
	 -\frac{Cz^\xi-\cc{C}\cc{z}^\xi}{(1+\cc{C}\cc{z}^\xi)^2(1+Cz^\xi)}\,d\cc{z}^2
	\right)$
 & $\kappa\in\Rnz$ \\
 \midrule
 (B.5)
 & $C$
 & $g=\kappa\,(\cc{z}-z)\,
				\left(
				\frac{C\,e^{-3z}}{z^2\cc{z}}\,dz^2
				 -\frac{\cc{C}\,e^{-3\cc{z}}}{z\cc{z}^2}\,d\cc{z}^2
				\right)$
 & $\kappa>0$ \\
 \midrule
 (B.6a)
 & $C$,\hfill $\lambda>0$
 & $g=\frac{\sin(\cc{z}-z)}{\sin(\cc{z}+\theta)\,\sin(z+\theta)}\,
			\left(
			\frac{C\,e^{-3\lambda z}}{\sin(z+\theta)}\,dz^2
			-\frac{\cc{C}\,e^{-3\lambda\cc{z}}}{\sin(\cc{z}+\theta)}\,d\cc{z}^2
			\right)$
 & $\theta\in\angl$ \\
 (B.6b)
 & $C\ne1$,\hfill $\lambda=0$
 & $g=\kappa\,\frac{\sin(\cc{z}-z)}{\sin(\cc{z})\,\sin(z)}\,
			\left(
			\frac{C\,dz^2}{\sin(z)}
			-\frac{\cc{C}\,d\cc{z}^2}{\sin(\cc{z})}
			\right)$
 & $\kappa>0$ \\
 \midrule
 (C.7)
 & $\xi\ne\frac12$
 & \begin{minipage}{8.5cm}
    \vskip 0.6em
    $g=\kappa\left(-2\frac{y^{\nicefrac{1}{\xi}}+x}{(y-\varrho)^3}dxdy
			+ \frac{1}{(y-\varrho)^4}\,\left(y^{\nicefrac{1}{\xi}}+x\right)^2\,dy^2\right)$
   \end{minipage}
 & $\kappa\in\Rnz$, $\varrho\in\pmo$ \\
 \midrule
 (C.8)
 & ---
 & $g=\kappa\,(Y(y)+x)\,dxdy$
   \hfill
   where $Y(y)=\int^y \frac{e^{\nicefrac{3}{2s}}}{|s|^{\nicefrac32}}\,ds$
 & $\kappa\in\Rnz$ \\
 \midrule
 (C.9a)
 & $\lambda>0$
 & $g = e^{\lambda\theta}\,(Y_\lambda(y)+x)\,dxdy$
   \hfill
   $Y_\lambda(y)= \int^y \frac{e^{-\nicefrac{3\lambda}{2}\,\arctan(s)}}{\sqrt[4]{s^2+1}}\,ds$
 & $\theta\in\angl$ \\
 (C.9b)
 & $\lambda=0$
 & $g = \kappa\,(Y_0(y)+x)\,dxdy$
 & $\kappa>0$ \\
 \bottomrule
\end{tabular}
\begin{flushleft}
          Universally, the constraints $\xi\in(0,1)\cup(1,4]$, $0\ne h\leq1$, $\varepsilon\in\pmo$ and $C=e^{i\varphi}$, where $\varphi\in\halfangl$, apply.
          Additional constraints are specified in the table. Furthermore, we impose some parameter constraints in individual cases.
\end{flushleft}
\begin{enumerate}[noitemsep,topsep=0pt,align=right,leftmargin=2cm,font=\colorbox{black!10}]
 \item[(A.1)] If $\xi=2$: $h\ne-\varepsilon$ and $h\ne-4\varepsilon$; if $\xi=3$: $|h|\ne1$ when $\varepsilon=-1$; if $\xi=4$: $h\ne1$. \\
	\emph{Special cases:} If $h=-1$: $\varrho=1$. If $h=+1$ and $\varepsilon=1$: $\kappa>0$.
 \item[(A.2)] \emph{Special case:} In case $h=1$, $\kappa>0$ is required.
 \item[(A.3)] If $\lambda=0$: $h\ne\pm1$.
	\emph{Special case:} If $\lambda>0$ and $|h|=e^{-3\lambda\pi}$, we require $\theta\in\halfangl$.
 \item[(B.4)] If $\xi=2$: $C\ne\pm1$; if $\xi=3$: $C^2\ne\pm1$; if $\xi=4$: $C\ne1$.
\end{enumerate}
\smallskip

\hrule

\caption{The normal forms for metrics with one, essential projective vector field. As established in \cite{matveev_2012,manno_2018}.}\label{tab:normal.forms.dom.2}
\end{center}
\end{table}
}

Let us briefly comment on the parameters $\lambda,\xi$ and $C,h,\varepsilon$ in Table~\ref{tab:normal.forms.dom.2}. In fact, we can split them up into three sets, which appear to have distinct effects on the respective normal forms, characterizing different aspects of the geometry.
\begin{table}[!ht]
\begin{center}
\begin{tabular}{ll}
 \toprule
 \textbf{Parameters} & \textbf{Interpretation} \\
 \midrule
 $\lambda$ and $\xi$ & Captures the eigenvalue configuration of $\lie_X$ \\
 $|h|$ and $C$ & `Measures' how far the metric is from the symmetry to swap variables $x,y$ \\
 $\varepsilon$ and $\sgn(h)$ & Contain information about the signature \\
 \bottomrule
\end{tabular}
\caption{The interpretation of the parameters of projective classes. Note that $\xi=2\,\frac{\lambda-1}{2\lambda+1}$ as derived in~\cite{matveev_2012}.}
\end{center}
\end{table}

\noindent Indeed, consider~\eqref{eqn:normal.forms.Lw} for the first pair of parameters. The other two can be inferred by directly considering the normal forms in Table~\ref{tab:normal.forms.dom.2}.
We also note that when these parameters ``hit'' a case of additional symmetry (e.g., $\lambda\to0$ or $|h|\to0$), the properties of the projective class may be affected. For instance, if $\lambda\to0$, we see in Figure~\ref{tab:mobility2} that the spiralling orbits of the projective symmetry ``degenerate'' to circular ones, changing the geometry of the metrizability space. In the case when $|h|=1$, we obtain special cases with additional symmetry (linked to the interchangability of the variables $x,y$) in cases (A.1) and (A.2). A similar special case appears in case (A.3a); in case (A.3b), in contrast, $|h|=1$ is a case where the projective symmetry becomes trivial, i.e.\ a Killing-Noether symmetry.

\section{Metrics with one, essential projective symmetry: Normal forms for degree of mobility 3}\label{app:normal.forms.dom.3}
Let $g$ be a (pseudo-)Riemannian metric on a 2-dimensional manifold~$M$, but now with degree of mobility~3. We still assume $\dim(\projalg(g))=1$ in any neighborhood of $M$ and that $\projalg(g)$ is generated by an essential projective vector field. Then, in a neighborhood of almost every point there exists a local coordinate system $(x,y)$ such that $g$ assumes the normal form ($\kappa\in\Rnz$, $\varepsilon\in\pmo$ and $c\in\mathds{R}$), see~\cite{manno_2018},
\begin{subequations}\label{eqn:dom.3.explicit.normal.form}
\begin{align}
  g &= \kappa\,\left(-2\frac{y^2+x}{(y-\varepsilon)^3}\,dxdy+\frac{(y^2+x)^2}{(y-\varepsilon)^4}\,dy^2\right)
  \label{eqn:dom.3.nf.1} \\
  g &= \frac{\kappa}{F(0,\varepsilon;x,y)^2}\,\left(
		    9(y^2+x)^2\,dx^2
		    -2(y^3+9xy-2\varepsilon)(y^2+x)\,dxdy
		    +12x(y^2+x)^2\,dy^2
	\right)
  \label{eqn:dom.3.nf.2} \\
  g &= \frac{\kappa}{F(\varepsilon,c;x,y)^2}\,\left(
		 9(y^2+x)^2\,dx^2
		 -2(y^3+2\varepsilon\,y+9xy-2c)(y^2+x)\,dxdy
		 +4(\varepsilon+3x)(y^2+x)^2\,dy^2
		\right)
  \label{eqn:dom.3.nf.3}
\end{align}
with
\[
 F(\zeta,c;x,y) = y^6-9xy^4+27x^2y^2-27x^3+4c^2-(36xy+4y^3)\,c
		  +(18xy^2-5y^4-9x^2-8cy)\,\zeta+4y^2\,\zeta^2.
\]
\end{subequations}
Two such normal forms, for different parameter values, are not diffeomorphic.
There is, also, another approach, which may be considered more geometrical. As we have degree of mobility~3, the space of solutions~$\solSp$ to~\eqref{eqn:linear.system.2D} is spanned by three basis vectors, which via~\eqref{eqn:sigma.g} correspond to three metrics. These metrics can be taken as follows \cite{matveev_2012}:
\begin{subequations}\label{eqn:supint.generators}
	\begin{align}
	\g1 &= (y^2+x)\,dxdy\label{eqn:y2.plus.x} \\
	\g2 &= -2\,\tfrac{y^2+x}{y^3}\,dxdy+\tfrac{(y^2+x)^2}{y^4}\,dy^2\label{eqn:y2.plus.x.2} \\
	\g3 &= \tfrac{y^2+x}{(3x-y^2)^6}\,
	(9\,(y^2+x)\,dx^2-4y\,(9x+y^2)\,dxdy+12x\,(y^2+x)\,dy^2)\label{eqn:y2.plus.x.3}
	\end{align}
\end{subequations}
Indeed, this choice is (up to multiplication by a constant and reordering of the basis) canonical in the following sense: These metrics correspond, again via~\eqref{eqn:sigma.g} to eigenvectors of the Lie derivative $\lie_X$ w.r.t.\ the projective vector field~$X$. Thus, $X$ is homothetic for the metrics~$\g{i}$ in~\eqref{eqn:supint.generators}. Note that the essential metrizability space, $\essSp\subset\metrSp$, is the complement (in $\metrSp$) of the union of the three eigenspaces corresponding to the~$\g{i}$.
Since we are interested in normal forms up to isometries, we can identify diffeomorphic metrics within the 3-dimensional space $\solSp$ (respectively, within~$\metrSp$).
Thus, we end up with the normal form
\begin{equation}\label{eqn:dom.3.spherical.normal.form}
\sigma[\theta,\varphi]
=\sin(\theta)\,\cos(\varphi)\,\sigma_1
+\sin(\theta)\,\sin(\varphi)\,\sigma_2
+\cos(\theta)\,\sigma_3\,,
\end{equation}
with $\theta\in(0,\pi)$ and $\varphi\in[0,2\pi)$ where we require $\varphi\notin\{0,\frac{\pi}{2},\pi,\frac{3\pi}{2}\}$ if $\theta=\frac{\pi}{2}$.
Note that~\eqref{eqn:dom.3.spherical.normal.form} amounts to a parameterization in terms of points on the 2-sphere.

Let us briefly contrast the normal forms~\eqref{eqn:dom.3.explicit.normal.form} and~\eqref{eqn:dom.3.spherical.normal.form} from a geometric viewpoint.
Obviously, the metrics~\eqref{eqn:supint.generators} provide a basis of the 3-dimensional space~$\solSp$. We should then remove the eigenspaces of $\lie_X$, to arrive at $\essSp=\solSp\setminus\{\text{eigenspaces of $\lie_X$}\}$.
At this step, many metrics will still be diffeomorphic, so we need to remove this remaining ambiguity to arrive at proper normal forms. Reference~\cite{manno_2018} provides two approaches for this problem, resulting in either~\eqref{eqn:dom.3.explicit.normal.form} or~\eqref{eqn:dom.3.spherical.normal.form}. It is not explained in the reference, so let us briefly comment on the underlying geometric procedure.

As is proven in~\cite{manno_2018}, the orbits in~$\solSp$ under isometries are precisely given by the action of the projective group, generated by~$X$. 
We fix normal forms by specifying 2-dimensional surfaces in $\solSp$ that have (at most) one intersection with each orbit. Let us start with the normal forms~\eqref{eqn:dom.3.spherical.normal.form}, because this is conceptually more straightforward. Indeed, consider the 2-sphere $\mathbb{S}^2\subset\mathbb{R}^3$. It has precisely one intersection with each orbit, and these intersections are precisely the points~\eqref{eqn:dom.3.spherical.normal.form}.


On the other hand, we might choose planes in $\solSp\sim\mathbb{R}^3$. Let $\sigma=\sum_iu_{i}\sigma_i\in\solSp$. Then consider the plane $\{u_3=0\}$. On this plane, which actually is a 2-dimensional $\lie_X$-invariant subspace of~$\solSp$, the action of~$X$ simplifies to
\[
 \phi^*_t(u_1\sigma_1+u_2\sigma_2)
 =e^{\lambda_1 t}u_1\sigma_1+e^{\lambda_2 t}u_2\sigma_2\,,
\]
where $\lambda_1$ and $\lambda_2$ are the eigenvalues of $\lie_X$ for $\sigma_1$ and $\sigma_2$, respectively. Since we are not interested in cases where either $u_1=0$ or $u_2=0$ (because then $X$ would become homothetic), and we cannot change the signs of each coordinate, we are able to choose the line $u_2\mp u_1=0$ to obtain the normal forms. Thus,
\[
 \sigma = \kappa\,(\sigma_1\pm\sigma_2)
\]
where $\kappa=u_1=\pm u_2$. This is (basically) the normal form~\eqref{eqn:dom.3.nf.1}.
The normal forms~\eqref{eqn:dom.3.nf.2} and~\eqref{eqn:dom.3.nf.3} can be obtained by similar considerations.

\section*{Acknowledgements}
The authors would like to thank Vladimir Matveev and Stefan Rosemann, as well as Rod Gover and Jonathan Kress, for fruitful discussions and valuable comments.
Gianni Manno (GM) acknowledges  that the present research has been partially supported by the following projects: ``Connessioni proiettive, equazioni di Monge-Amp\`ere e sistemi integrabili'' by Istituto Nazionale di Alta Matematica (INdAM),  ``MIUR grant Dipartimenti di Eccellenza 2018-2022 (E11G18000350001)'' and ``Finanziamento alla ricerca 2017-2018 (53\_RBA17MANGIO)''. GM is a member of INdAM.
Andreas Vollmer (AV) is a postdoc research fellow of Deutsche Forschungsgemeinschaft (DFG) through the project ``Second and third order superintegrable systems: classification and applications'' funded by Deutsche Forschungsgemeinschaft (DFG, German Research Foundation); project number 353063958.
AV would like to thank Friedrich Schiller University (FSU) Jena, the University of Pavia and the University of Auckland for hospitality and acknowledges travel support from DFG, FSU, INdAM and the University of Auckland.

\sloppy
\printbibliography

\end{document}